\documentclass[12pt]{amsart}

\usepackage{url,  mathrsfs}
\usepackage{hyperref}
\usepackage{amsmath}
\usepackage{graphicx, 
epstopdf}
\usepackage{subfig}
\usepackage{color}
\usepackage{bbm}
\usepackage{subfloat}
\usepackage[draft]{fixme}
\usepackage{bm} 
\usepackage{bbm} 
\usepackage{epigraph}
\usepackage{endnotes}
\usepackage{ifpdf}
\usepackage{array}

\usepackage{multicol}
\usepackage{multirow}
\usepackage{graphicx}
\usepackage{tikz}

\newcommand{\Z}{\mathbb{Z}}
\newcommand{\R}{\mathbb{R}}
\newcommand{\F}{\mathbb{F}}
\newcommand{\C}{\mathbb{C}}
\renewcommand{\P}{\mathbb{P}}
\newcommand{\inv}{^{-1}}
\newcommand{\M}{\mathcal{M}}
\renewcommand{\L}{\mathcal{L}}
\newcommand{\V}{\mathcal{V}}

\newcommand{\1}{\mathbf{1}}
\newcommand{\B}{\mathcal{B}}
\renewcommand{\F}{\mathcal{F}}
\renewcommand{\H}{\mathcal{H}}
\newcommand{\Sym}{\textnormal{Sym}}

\newcommand{\Hom}{\textnormal{Hom}}
\newcommand{\tr}{\textnormal{tr}}
\newcommand{\ii}{\textnormal{i}}
\newcommand{\rb}{\iota_{\mathcal{B}}^{\vee}}

\theoremstyle{plain}
\newtheorem{thm}{Theorem} 
\newtheorem{prop}[thm]{Proposition}
\newtheorem{cor}[thm]{Corollary}
\newtheorem{lem}[thm]{Lemma}
\newtheorem{question}[thm]{Question}

\newtheorem*{thm*}{Theorem}

\theoremstyle{definition}
\newtheorem{Def}[thm]{Definition}
\newtheorem{example}[thm]{Example}
\newtheorem{remark}[thm]{Remark}

\numberwithin{equation}{section}
\numberwithin{thm}{section}

\DeclareMathOperator{\Gr}{Gr}

\DeclareMathOperator{\diag}{diag}
\DeclareMathOperator{\re}{Re}
\DeclareMathOperator{\im}{Im}
\DeclareMathOperator{\vecz}{vec}
 \def\dashmapsto{\mapstochar\dashrightarrow}

\usepackage[inner=1in,outer=1in,top=1in,bottom=1in]{geometry}

\makeatletter
\newcommand{\addresseshere}{%
  \enddoc@text\let\enddoc@text\relax
}
\makeatother

\begin{document}
\title{The Chow form of a reciprocal linear space}

 \author{Mario Kummer}
\address{Max Planck Institute for Mathematics in the Sciences, Leipzig, Germany} 
\email{kummer@mis.mpg.de}

 \author{Cynthia Vinzant}
\address{North Carolina State University, Raleigh, NC, USA}
\email{clvinzan@ncsu.edu}

\begin{abstract}
A reciprocal linear space is the image of a linear space under coordinate-wise inversion. 
These fundamental varieties describe the analytic centers of hyperplane arrangements 
and appear as part of the defining equations of the central path of a linear program. 
Their structure is controlled by an underlying matroid. This provides a large family of hyperbolic varieties, 
recently introduced by Shamovich and Vinnikov. 
Here we give a definite determinantal representation to the Chow form of a reciprocal linear space. 
One consequence is the existence of symmetric rank-one Ulrich sheaves on reciprocal linear spaces. 
Another is a representation of the entropic discriminant as a sum of squares. 
For generic linear spaces, the determinantal formulas obtained are closely related to the Laplacian of the complete graph and 
generalizations to simplicial matroids. 
This raises interesting questions about the combinatorics of hyperbolic varieties and connections with the positive Grassmannian. \\
\end{abstract}

\maketitle

The {\bf reciprocal linear space} of a $d$-dimensional linear space $\L \subseteq \C^n$ is the variety 
\[\L^{-1} \ \ = \ \ \overline{\{[x_1^{-1} : \hdots : x_n^{-1}] \ : \  x\in \L\cap (\C^*)^n\}} \ \ \subseteq \ \ \P^{n-1}(\C).\]
Proudfoot and Speyer \cite{PS} studied degenerations of the coordinate ring $\C[\L^{-1}]$ 
and proved that the degree of $\L^{-1}$ is the M\"obius invariant of the matroid $M(\L)$ associated with $\L$. 
Varchenko \cite{Var95} showed that if $\L$ is defined over $\R$ then all of the intersection points of $\L^{-1}$ 
with an affine linear space $\L^\perp + u$ for $u\in \R^n$ are real, where $\L^{\perp}$ denotes the orthogonal complement of $\L$. 
This fully real structure was exploited to study the central curve of a linear program \cite{DLSV12} and entropy maximization for log-linear models \cite{SSV13}.
In fact, this shows that the reciprocal linear space $\L^{-1}$ is \emph{hyperbolic} with respect to the linear space $\L^{\perp}$. 

The notion of a hyperbolic variety was recently introduced by Shamovich and Vinnikov \cite{SV} 
as a generalization of hyperbolic polynomials and hypersurfaces. 
The study of hyperbolic polynomials originated in the theory of partial differential equations \cite{Gar51} and has extended to optimization \cite{Guel97, Ren06},
combinatorics \cite{COSW04} and statistics \cite{MSUZ14}. 
In 2007, Helton and Vinnikov \cite{HV07} proved that every hyperbolic polynomial $f\in \R[x_0, x_1, x_2]_d$ has a definite symmetric
determinantal representation, $f = \det(\sum_i x_i A_i)$, where each $A_i$ is a real symmetric $d\times d$ matrix and $\sum_i v_i A_i$ 
is positive definite for some $v\in \R^3$.  In more variables, not all hyperbolic polynomials have such representations \cite{Bra11}. 
The challenge of testing whether a given hyperbolic polynomial has a definite determinantal representation and finding one if it exists is an active topic of research 
\cite{hanselka2014definite, kummer2013determinantal, us, TimundThom12, PV13}. See \cite{vppf} for a survey.

In \cite{SV}, the authors generalize the Helton--Vinnikov theorem to show that any hyperbolic curve 
has a definite determinantal representation, in the sense that its \emph{Chow form} has a definite determinantal representation with desired properties. 
Motivated by concepts from the theory of multivariate operators and multidimensional systems (vessels), they develop a theory of 
hyperbolic varieties of codimension $>1$ and their definite determinantal representations, which certify hyperbolicity.
These are intimately related to certain Ulrich sheaves supported on the variety, \cite{KuSh}.
There are still fundamental open question about the structure of hyperbolic varieties.  Reciprocal linear spaces provide a large 
class of explicit hyperbolic varieties on which to explore this developing theory.

The paper is organized as follows. In Section~\ref{sec:stablevars} we give basic definitions,
introduce a class of hyperbolic varieties that generalize hypersurfaces defined by stable polynomials, 
and prove that it is preserved under coordinate-wise inversion. 
In Section~\ref{sec:detBackground}, we recall the notion of Liv{\v{s}}ic-type determinantal representations and 
in Section~\ref{sec:detreps}, we show that they exist for reciprocal linear spaces.  Explicit formulas in the generic case are closely related to 
graphic and simplicial matroids, discussed in Section~\ref{sec:uniform}.  Section~\ref{sec:Hadamard} 
discusses relations to the Hadamard product of linear spaces, as studied in \cite{Hadamard}. 
The results of \cite{KuSh} and this paper imply that there exist rank-one Ulrich sheaves on reciprocal linear
spaces. In Section~\ref{sec:Ulrich}, we use this to prove the conjecture of \cite{SSV13} that 
the entropic discriminant is a sum of squares.

\bigskip \textbf{Acknowledgments.} 
We would especially like to thank Bernd Sturmfels for his guidance and introduction to this problem. 
This project started while both authors were attending the semester on ``Algorithms and Complexity in Algebraic Geometry'' 
at the Simons Institute for Theory and Computing.  We are grateful for discussions with Joseph Kileel, Radmila Sazdanovic, Eli Shamovich, and Victor Vinnikov. 
Over the course of this project, Mario Kummer was supported by the Studienstiftung des deutschen Volkes and Cynthia Vinzant received support from  
the National Science Foundation (DMS-1204447 and DMS-1620014).

\section{Hyperbolic varieties and the Positive Grassmannian}\label{sec:stablevars}

Here we give the technical definitions of hyperbolic varieties, as developed by \cite{SV}, and define a closely related 
notion of stability involving the positive Grassmannian.

\begin{Def}
 Let $L \subseteq \P^{n-1}(\C)$ be a linear subspace defined over the reals of projective dimension $c-1$.
 A  real quasi-projective variety $X \subseteq \P^{n-1}(\C)$ of codimension $c$ is {\bf hyperbolic}
 with respect to $L$  if $X \cap L = \emptyset$ and if for all real linear subspaces $L' \supset L$ of dimension $c$, 
 the intersection $X \cap L'$ consists only of real points. 
\end{Def}

When $X$ is a hypersurface ($c=1$), we recover the better known definition of a hyperbolic polynomial: 
A homogeneous polynomial $f \in \R[x_1,\ldots,x_n]_d $
is hyperbolic with respect to a point $L = e \in \P^{n-1}(\R)$ if $f(e)\neq 0$ and every line $L'$ containing $e$ 
meets $X = \V(f)$ only in real points. 
A closely related notion is \emph{stability}.  A polynomial $f \in \R[x_1, \ldots, x_n]$ is {\bf stable}
if $f(z) \neq 0$ whenever $\im(z)\in (\R_+)^n$. 
One can check that a homogeneous polynomial $f\in \R[x_1, \hdots, x_n]_d$ is stable if and only if it is hyperbolic with respect to 
every point in the positive orthant $\R_{+}^n$. 
Stable polynomials appear in analysis, combinatorics, and optimization \cite{wagnerSurvey} and were central in the recent proof of the Kadison--Singer conjecture \cite{MSS14}. 

A natural generalization of stability to varieties of higher codimension is to consider hyperbolicity with respect to 
an orthant in the Grassmannian of linear spaces of the correct dimension.  
The Grassmannian of $c$-dimensional linear subspaces of $\mathbb{A}^n$ is denoted by $\Gr(c,n)$.
We also identify $\Gr(c,n)$ with the set of $(c-1)$-dimensional linear subspaces of $\P^{n-1}(\C)$.

Consider the Pl\"ucker embedding $L \mapsto p(L)=(p_I(L) : I\in \binom{[n]}{c})$ 
of the Grassmannian $\Gr(c,n)$ in $\P^{\binom{n}{c}-1}$.  An orthant in the Grassmannian is the subset of real linear 
spaces whose Pl\"ucker coordinates have a prescribed sign pattern.  Specifically, for $\sigma \in \{\pm 1\}^{\binom{n}{c}}$, 
let $\Gr(c,n)^{\sigma}$ denote the subset of $\Gr(c,n)$ of $c$-dimensional subspaces $L \subseteq \R^n$ such that the products $\sigma_I p_I(L)$ are either all strictly positive or all strictly negative.
In particular, if $\sigma$ is the all-ones vector, then $\Gr(c,n)^{\sigma}$ is the \textit{positive Grassmannian},
studied in the theory of totally positive matrices \cite{lusztig} and scattering amplitudes in string theory \cite{scattering}.
This gives a natural generalization of stable polynomials to varieties of higher codimension.

\begin{Def}
A real, quasi-projective variety $X \subseteq \P^{n-1}(\C)$ of codimension $c$ is 
{\bf $\sigma$-stable} for $\sigma\in\{\pm1\}^{\binom{n}{c}}$ if $X$ is hyperbolic with respect to every $L\in \Gr(c,n)^\sigma$.
\end{Def}

Operations that preserve stability of hypersurfaces are well-understood \cite{stabilityPreservers, wagnerSurvey}.
It would be interesting to give a similar characterization of operation preserving stability of varieties.  
One operation that does extend to varieties of higher codimension is 
coordinate-wise inversion. In order to prove this, we need another characterization of hyperbolicity. 

\begin{prop}\label{prop:hypEquiv}
 Let $X \subseteq \P^{n-1}(\C)$ be a real quasi-projective variety of codimension $c$ and 
$L\subset \P^{n-1}(\C)$ be a real projective linear space of dimension $c-1$. 
 Let $\hat{L}(\R) \subseteq \R^{n}$ be the real points in the affine cone $\hat{L}$ over $L$.
 Then the following are equivalent:
 \begin{enumerate}
  \item $X$ is hyperbolic with respect to $L$.
  \item For all  $ a \in \R^{n}$ and $0 \neq b \in \hat{L}(\R)$, and we have that $[a+\textnormal{i} b ] \not\in X$.
 \end{enumerate}
\end{prop}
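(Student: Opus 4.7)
The plan rests on the elementary observation that a projective point $[a+\ii b]\in\P^{n-1}(\C)$ with $a,b\in\R^n$ lies in $\P^{n-1}(\R)$ if and only if $a$ and $b$ are $\R$-linearly dependent. This single fact drives both implications.

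For $(1)\Rightarrow(2)$, I fix $a\in\R^n$ and $0\ne b\in\hat L(\R)$ and argue by contradiction from $[a+\ii b]\in X$. If $a\in\hat L(\R)$, then $a+\ii b\in\hat L$, so $[a+\ii b]\in L$, contradicting $X\cap L=\emptyset$. Otherwise the real projective span $L'$ of $L$ together with $[a]$ is a real subspace of projective dimension $c$ strictly containing $L$, and $[a+\ii b]\in L'$ by construction. Since $b\in\hat L(\R)$ but $a\notin\hat L(\R)$, the vectors $a,b$ are $\R$-linearly independent, so $[a+\ii b]$ is a non-real point of $X\cap L'$, contradicting hyperbolicity with respect to $L$.

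For $(2)\Rightarrow(1)$ I would split into two pieces. To show $X\cap L=\emptyset$, I decompose any $[z]\in X\cap L$ uniquely as $z=u+\ii v$ with $u,v\in\hat L(\R)$ (using that $\hat L$ is defined over $\R$); hypothesis (2) applied with $(a,b)=(u,v)$ when $v\ne 0$, or with $(a,b)=(0,u)$ when $v=0$, directly yields $[z]\notin X$. For hyperbolicity with respect to an arbitrary real $L'\supset L$ of projective dimension $c$, I choose $w\in\hat L'(\R)\setminus\hat L(\R)$ so that $\hat L'=\hat L\oplus\C w$, and expand any $[z]\in X\cap L'$ as $z=w_0+\mu w$ with $w_0\in\hat L$ and $\mu\in\C$. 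The case $\mu=0$ is ruled out by $X\cap L=\emptyset$. When $\mu\ne 0$ I rescale and write $w_0/\mu=a'+\ii b'$ with $a',b'\in\hat L(\R)$, so that $z/\mu=(a'+w)+\ii b'$ with $a'+w\in\R^n$ and $b'\in\hat L(\R)$; (2) forces $b'=0$, so $z$ is a complex scalar multiple of the real vector $a'+w$ and $[z]$ is real.

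I expect the main care to lie in the degenerate sub-case $a\in\hat L(\R)$ of the forward direction: there the projective span $L'$ collapses back to $L$ itself, so hyperbolicity on a strict extension is unavailable and one must fall back on $X\cap L=\emptyset$. In the reverse direction the key choice is the real complement $w\in\hat L'(\R)\setminus\hat L(\R)$, which is what allows an arbitrary complex point of $L'$ to be normalized into the form (real vector) $+\,\ii\,$(element of $\hat L(\R)$) recognized by condition (2).
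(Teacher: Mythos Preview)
Your argument is correct and follows essentially the same route as the paper's proof: both directions hinge on writing points of a real $c$-plane $L'\supset L$ in the normalized form (real vector)$\,+\,\ii\,$(element of $\hat L(\R)$) and then invoking the fact that $[a+\ii b]$ is real precisely when $a,b$ are $\R$-linearly dependent. Your version is in fact slightly more careful than the paper's in two places---you explicitly separate the degenerate sub-case $a\in\hat L(\R)$ in $(1)\Rightarrow(2)$, and in $(2)\Rightarrow(1)$ you handle the possibility $v=0$ when showing $X\cap L=\emptyset$---but these are minor refinements of the same argument, not a different approach.
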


\begin{proof}
 ($\Rightarrow$) Let $ a \in \R^{n}$ and $0 \neq b \in \hat{L}(\R)$ .  The point $p=[a+\textnormal{i} b ]$ belongs to 
 the projective linear space spanned by $L$ and $a$.  Suppose that $p$ belongs to $X$. 
Since $X$ is hyperbolic with respect to $L$, it follows that $p\in \P^{n-1}(\R)$, \emph{i.e.} for some $\lambda = \lambda_r + \textnormal{i}\lambda_i \in \C^*$ with $\lambda_r$ and $\lambda_i\in\R$, the vector $\lambda (a+\textnormal{i}b)$ is real.  Taking imaginary parts shows that 
$\lambda_i a + \lambda_r b =0$ and $a,b$ are linearly dependent.  The point $p$ then belongs to $ L$, which contradicts $L \cap X = \emptyset$.
 
 ($\Leftarrow$) Let $[c+\textnormal{i}d]\in L$ with $c,d\in\R^n$. Since $L$ is a real projective linear space, we have $d\in\hat{L}(\R)$. 
Taking $a=c$ and $b=d$ shows that $[c+\textnormal{i}d]\not\in X$, meaning that $L \cap X = \emptyset$.
Now suppose $L' \supseteq L$, where $L'$ is a real linear subspace of projective dimension $c$, and consider a point 
$p$ in the intersection $X \cap L'$.
We can write $L'$ as the span of $L$ and some real vector $a\in \R^n$, meaning that the point $p$ equals $[\lambda a + b]$ 
for some $b \in \hat{L}$ and $\lambda \in \C$. 
Since $L\cap X$ is empty, we have that $\lambda\neq 0$. We can rescale $\lambda$ and $b$ so that $\lambda =-\textnormal{i}$ and 
$p = [a + \lambda^{-1}b] = [a+\textnormal{i}b]$.    In particular, this writes $p = [(a - \im(b)) + \textnormal{i} \re(b)]$. 
Since $p\in X$, $(a - \im(b))\in \R^n$, and $\re(b) \in \hat{L}(\R)$, it must be that $\re(b)=0$ and $p\in \P^{n-1}(\R)$. 
\end{proof}

This new characterization can be used to show that the class of $\sigma$-stable varieties is closed under coordinate-wise inversion.  Formally, for any quasi-projective variety $X\subset \P^{n-1}(\C)$, let $X^{-1}$ denote its image under the rational map $[x_1: \hdots: x_n]\dashmapsto [x_1^{-1}: \hdots: x_n^{-1}]$.

\begin{prop}\label{prop:inversion}
Let $X \subseteq \P^{n-1}(\C)$ be an irreducible real, projective variety of codimension $c$ not contained in any 
coordinate hyperplane and let $\sigma \in \{\pm 1\}^{\binom{n}{c}}$. 
  Then $X$ is $\sigma$-stable if and only if its reciprocal $X^{-1}$ is $\sigma$-stable.
\end{prop}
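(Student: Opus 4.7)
The plan is to invoke Proposition \ref{prop:hypEquiv} to reformulate $\sigma$-stability as the emptiness of an intersection with a single ``forbidden'' subset of $\P^{n-1}(\C)$, and then to exploit the fact that coordinate-wise inversion, viewed as an involution of the torus $(\C^*)^n$, preserves this subset. Set
\[\B_\sigma \ :=\ \bigcup_{L\in\Gr(c,n)^\sigma}\bigl(\hat{L}(\R)\setminus\{0\}\bigr) \ \subseteq\ \R^n \quad \text{and}\quad \Omega^+_\sigma \ :=\ \{[a+\ii b] : a\in\R^n,\ b\in\B_\sigma\} \ \subseteq\ \P^{n-1}(\C).\]
Proposition \ref{prop:hypEquiv} then says $X$ is $\sigma$-stable if and only if $X \cap \Omega^+_\sigma = \emptyset$, and the analogous statement holds for $X^{-1}$.

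The central calculation is that for $[a+\ii b] \in \Omega^+_\sigma \cap (\C^*)^n$, one has $\phi([a+\ii b]) = [a'+\ii b']$ with $b'_j = -b_j/(a_j^2+b_j^2)$, so $b' = -Tb$ where $T=\diag(t_1,\dots,t_n)$ is the positive diagonal matrix with $t_j := 1/(a_j^2+b_j^2) > 0$. The Pl\"ucker identity $p_I(TL) = (\prod_{j\in I} t_j)\,p_I(L)$ combined with $t_j > 0$ gives $TL \in \Gr(c,n)^\sigma$, and hence $b'\in\B_\sigma$; thus $\phi$ preserves $\Omega^+_\sigma \cap (\C^*)^n$. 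Because $\phi$ is an automorphism of the open subset $(\C^*)^n$ and $X^{-1}$ is the closure of $\phi(X\cap(\C^*)^n)$ in $\P^{n-1}(\C)$, one has $\phi(X\cap(\C^*)^n) = X^{-1}\cap(\C^*)^n$, so
\[X \cap \Omega^+_\sigma \cap (\C^*)^n = \emptyset \ \iff\ X^{-1} \cap \Omega^+_\sigma \cap (\C^*)^n = \emptyset.\]

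To promote this to the full equivalence, I plan to show $\Omega^+_\sigma$ is open in $\P^{n-1}(\C)$. Since $X$ is irreducible and not contained in any coordinate hyperplane, $X \cap (\C^*)^n$ is dense in $X$; the same reasoning applies to $X^{-1}$. Together with openness of $\Omega^+_\sigma$, this yields $X \cap \Omega^+_\sigma = \emptyset \iff X \cap \Omega^+_\sigma \cap (\C^*)^n = \emptyset$, and likewise for $X^{-1}$, completing the argument. Openness of $\Omega^+_\sigma$ reduces to openness of $\B_\sigma$ in $\R^n\setminus\{0\}$, which I would prove by a basis-perturbation argument: given $b\in\hat{L}(\R)\setminus\{0\}$ with $L\in\Gr(c,n)^\sigma$, extend $b$ to a basis $(b,v_2,\dots,v_c)$ of $\hat{L}(\R)$, and for small $\delta\in\R^n$ let $L_\delta$ be the projective linear space whose affine cone is $\mathrm{span}(b+\delta,v_2,\dots,v_c)$; then $L_\delta\to L$ in $\Gr(c,n)$, so the Pl\"ucker sign pattern $\sigma$ persists and $b+\delta$ lies in the real affine cone over $L_\delta$.

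The main obstacle is exactly this boundary issue: \emph{a priori}, a witness $[a+\ii b] \in X^{-1}\cap\Omega^+_\sigma$ could lie on a coordinate hyperplane, outside the domain of $\phi$. The openness of $\Omega^+_\sigma$ is what lets us trade such a boundary witness for a nearby one in $X^{-1}\cap(\C^*)^n$, at which point the torus calculation applies directly.
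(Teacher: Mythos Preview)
Your proposal is correct and follows essentially the same approach as the paper: reformulate $\sigma$-stability via Proposition~\ref{prop:hypEquiv} as emptiness of $X\cap\Omega^+_\sigma$ (the paper calls this set $\mathcal{S}_\sigma$), show that $\Omega^+_\sigma$ restricted to the torus is invariant under coordinate-wise inversion, and use openness of $\Omega^+_\sigma$ together with density of torus points in $X$ and $X^{-1}$ to pass from the torus to the full variety. The only differences are cosmetic: the paper factors inversion as complex conjugation composed with scaling by $(\R_+)^n$ (both of which visibly preserve $\mathcal{S}_\sigma$), whereas you compute $1/(a_j+\ii b_j)$ directly and observe that the imaginary part is $-T b$ for a positive diagonal $T$; and the paper simply asserts that $\mathcal{S}_\sigma$ is open, whereas you supply the basis-perturbation argument.
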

\begin{proof}
By Proposition~\ref{prop:hypEquiv}, $X$ is $\sigma$-stable if and only if $X\cap \mathcal{S}_\sigma$ is empty, where 
\[ \mathcal{S}_{\sigma} \ = \ \{ [a+\textnormal{i}b]  \ : \ a\in \R^n, \  0\neq b\in \hat{L}(\R) \text{ for some } L \in \Gr(c,n)^{\sigma}\} \  \subset  \ \P^{n-1}(\C).\]
Let $U$ be the set of points in $\P^{n-1}(\C)$ with all coordinates non-zero. 
We claim that $\mathcal{S}_{\sigma}\cap U $ is invariant under coordinate-wise inversion. First, note that the multiplicative group 
$(\R_+)^n$ acts by coordinate-wise multiplication on $\C^{n}$, and the induced action on $\Gr(c,n)$ preserves $\Gr(c,n)^\sigma$.
It follows that $\mathcal{S}_{\sigma}$ is also invariant under this action of $(\R_+)^n$. It is also closed under complex conjugation.  
Coordinate-wise inversion can be realized as a composition of these actions. 
Specifically, if $p = re^{\ii \theta}\neq 0$, then $p^{-1} = r^{-1}e^{-\ii \theta} = r^{-2} \cdot \overline{p}$.  Therefore $\mathcal{S}_{\sigma}\cap U $ is closed under coordinate-wise inversion. 

If $X$ is $\sigma$-stable, then  $ X \cap \mathcal{S}_{\sigma}$ is empty. 
Since  $\mathcal{S}_{\sigma}\cap U $ is closed under coordinate-wise inversion, it follows that 
$ X^{-1} \cap \mathcal{S}_{\sigma} \cap U $ is empty.  Note that $\mathcal{S}_{\sigma} $ is open in the Euclidean topology on $\P^{n-1}(\C)$. 
Since $X$ is irreducible and not contained in a coordinate hyperplane, 
$X^{-1}\cap U$ is open and dense in the closed set $X^{-1}$ (with respect to both the Euclidean and the Zariski topology).
 It follows that $\mathcal{S}_{\sigma} $ intersects $X^{-1}\cap U$ if and only if $\mathcal{S}_{\sigma} $ intersects $X^{-1}$. 
Thus $ X^{-1} \cap \mathcal{S}_{\sigma} = \emptyset$ and $X^{-1}$ is $\sigma$-stable. 
Since $X$ is irreducible and not contained in a coordinate hyperplane, $X$ equals $(X^{-1})^{-1}$. The converse follows. 
\end{proof}

From this theory, we can see that $\L^{-1}$ is hyperbolic with respect to $\mathcal{L}^{\perp}$ and 
that $\L^{-1}$ is $\sigma$-stable for any $\sigma$ describing the sign pattern of the non-zero Pl\"ucker coordinates 
of $\L^{\perp}$.

\begin{cor}
 The reciprocal linear space $\mathcal{L}^{-1}$ is hyperbolic with respect to $\mathcal{L}^{\perp}$.
 Furthermore, if  $\L^{\perp}$ belongs to the Euclidean closure of $\Gr(n-d,n)^{\sigma}$, then $\L^{-1}$ is $\sigma$-stable. 
\end{cor}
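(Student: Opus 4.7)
The plan is to prove the two assertions separately: hyperbolicity via Proposition~\ref{prop:hypEquiv} combined with Varchenko's theorem, and $\sigma$-stability via Proposition~\ref{prop:inversion} combined with a Plücker positivity computation.

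For the hyperbolicity of $\L^{-1}$ with respect to $\L^\perp$, I use the characterization of Proposition~\ref{prop:hypEquiv}: it suffices to show that $[a+\ii b]\notin \L^{-1}$ for every $a\in \R^n$ and every $0\neq b\in \L^\perp(\R)$. Since $\L^\perp$ is a complex subspace containing $b$, we have $\ii b\in \L^\perp$, so the representative $a+\ii b$ lies in the complex affine subspace $\L^\perp+a$. As $\L^{-1}\subset \C^n$ is a cone (if $x\in \L\cap (\C^*)^n$ then $\lambda x^{-1}=(x/\lambda)^{-1}$ with $x/\lambda\in \L$), the hypothesis $[a+\ii b]\in \L^{-1}$ is equivalent to $a+\ii b\in \L^{-1}\cap (\L^\perp+a)$. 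Varchenko's theorem (from the introduction), applied with $u=a\in \R^n$, then forces $a+\ii b\in \R^n$, so $b=0$, a contradiction.

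For the $\sigma$-stability assertion, Proposition~\ref{prop:inversion} reduces the problem to showing that $\L$ itself is $\sigma$-stable (the hypotheses on $\L$ being irreducible, real and not contained in a coordinate hyperplane are satisfied throughout). Since $\L$ is a linear variety of codimension $c=n-d$, $\sigma$-stability amounts to $L\cap \L=\{0\}$ for every $L\in \Gr(n-d,n)^\sigma$. Let $A$ and $B$ be $n\times d$ and $n\times (n-d)$ matrices whose columns span $\L$ and $L$. Expanding $\det[A\,|\,B]$ by Laplace and applying the Plücker duality $p_I(\L)=\pm p_{[n]\setminus I}(\L^\perp)$, the two families of signs cancel, yielding an identity of the form
\[ \det[A\,|\,B] \;=\; \pm\sum_{|J|=n-d} p_J(\L^\perp)\, p_J(L). \]
Because $\L^\perp\in \overline{\Gr(n-d,n)^\sigma}$ and $L\in \Gr(n-d,n)^\sigma$, every product $p_J(\L^\perp)\, p_J(L)$ has the same sign, and at least one is strictly nonzero since $L$ lies in the open orthant. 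Thus the determinant is nonzero, so $L\cap \L=\{0\}$ and $\L$ is $\sigma$-stable; by Proposition~\ref{prop:inversion} so is $\L^{-1}$.

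The main technical obstacle is the sign bookkeeping in the Plücker--Laplace identity: one must check that the Laplace-expansion signs and the signs arising from Plücker duality cancel exactly, producing a sum with a single sign pattern dictated by $\sigma$. A secondary, more minor concern is ensuring that Varchenko's theorem is applied correctly at all $u\in \R^n$ (including non-generic $u$), which can be handled either by appealing to the strong form of the theorem or by a standard limit/approximation argument.
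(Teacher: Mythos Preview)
Your argument for $\sigma$-stability is essentially the paper's: both show that $\L$ itself is $\sigma$-stable by expanding $\det[A\,|\,B]$ as a signed sum of products of Pl\"ucker coordinates (the paper phrases this as the Cauchy--Binet identity $\sum_I p_I(\L)p_I(M^\perp)$, which is your formula after one application of Pl\"ucker duality), and then invoke Proposition~\ref{prop:inversion}. The sign bookkeeping you flag is handled in the paper by passing through the map $M\mapsto M^\perp$ and recording the induced sign change $\sigma\mapsto\sigma'$ on orthants.

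Your argument for hyperbolicity with respect to $\L^\perp$ takes a genuinely different route. The paper does not invoke Varchenko's theorem here; instead it first establishes $\sigma$-stability as above, and then obtains hyperbolicity at the boundary point $\L^\perp\in\overline{\Gr(n-d,n)^\sigma}$ by citing the limit result \cite[Thm.~3.10]{SV} together with the separately proved fact \cite[Lemma~27]{SSV13} that $\L^{-1}\cap\L^\perp=\emptyset$. Your approach is more direct and relies only on the Varchenko statement already quoted in the introduction; if that statement is taken at face value for every $u\in\R^n$, your deduction via Proposition~\ref{prop:hypEquiv} is clean and in fact yields $\L^{-1}\cap\L^\perp=\emptyset$ as a byproduct rather than needing it as an input. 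The trade-off is exactly the one you identify: one must either justify Varchenko at non-generic $u$ (including $u\in\L^\perp$, where the affine intersection degenerates to a cone) or supply a limit argument, and this is precisely the work the paper outsources to the two external references.
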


\begin{proof}
Suppose that $\sigma\in \{\pm 1\}^{\binom{n}{n-d}}$ satisfies $\sigma_I = {\rm sign}(p_I(\L^{\perp}))$ 
whenever $p_I(\L^{\perp})\neq 0$. We will show that $\L$, and thus $\L^{-1}$, is $\sigma$-stable. 
Note that $M \mapsto M^{\perp}$ defines a map from $\Gr(n-d,n)^{\sigma}$ to $\Gr(d,n)^{\sigma'}$
where $\sigma'_I = (-1)^{s(I)} \sigma_{[n]\backslash I}$ and $s(I) =\frac{1}{2}d(d+1)+ \sum_{i \in I} i$. 
Since $\sigma$ agrees in sign with the non-zero 
Pl\"ucker coordinates of $\L^{\perp}$, then $\sigma'$ agrees in sign with the non-zero Pl\"ucker coordinates of $\L$. 

The linear variety $\L$ is hyperbolic with respect to a real projective linear space $M$ if and only if $\L\cap M$ is empty. 
Let $M\in \Gr(n-d, n)^{\sigma}$. By the Cauchy--Binet Theorem, 
$ \L\cap M\neq \emptyset$ if and only if $\sum_{I\in \binom{[n]}{d}} p_I(\L)p_I(M^{\perp})$ equals zero.
Since the non-zero Pl\"ucker coordinates of $\L$ and $M^{\perp}$ agree in sign, the sum $\sum_{I} p_I(\L)p_I(M^{\perp})$ 
is non-zero and $\L\cap M$ is empty. This shows that $\L$ is $\sigma$-stable. By Proposition~\ref{prop:inversion},
it follows that $\L^{-1}$ is $\sigma$-stable. 

Finally, $\L^{-1}$ is hyperbolic with respect to a linear space $M$ in the closure of $\Gr(n-d,n)^\sigma$  
as long as $\L^{-1}\cap M$ is empty by \cite[Thm. 3.10]{SV}.  By \cite[Lemma 27]{SSV13}, the intersection $\mathcal{L}^{-1} \cap \mathcal{L}^{\perp}$ is empty, 
meaning that $\L^{-1}$ is hyperbolic with respect to $\L^{\perp}$.  
\end{proof}

The theory of stable polynomial has rich connections with combinatorics and matroid theory, \cite{BrandenHPP, COSW04, Gurvits}. 
In Sections~\ref{sec:detreps} and \ref{sec:uniform}, we associate to $\L^{-1}$ a multiaffine stable polynomial in 
Pl\"ucker coordinates on $\Gr(n-d,n)$, 
whose support consists of the bases of a matroid on $\binom{n}{d}$ elements. 
It would be very interesting to generalize this to arbitrary stable varieties. 

\begin{question} Is there a polymatroid associated to a stable variety $X\subset \P^{n-1}(\C)$?\end{question}

\section{Background on determinantal representations and Chow forms}\label{sec:detBackground}
In this section we recall from \cite{SV} the definition and some properties
of Liv{\v{s}}ic-type determinantal representations.  First, we fix some notation for working in the Grassmannian. 

Let $1 \leq d \leq n$ and take $e_1, \ldots, e_n $ to be the standard basis for $\R^n$.
For each subset $I =\{i_1, \ldots, i_d\}$ of $[n]$ with $i_1 < \cdots < i_d$, 
we denote $e_I=e_{i_1} \wedge \cdots \wedge e_{i_d}$.
The collection of wedge products $\{e_I:\, I \in \binom{[n]}{d}\}$ forms a basis of $\bigwedge^d \R^n$.
For a linear space $L = {\rm span}\{v_1, \hdots, v_d\}$ in $\Gr_{\R}(d,n)$, 
we can express $v_1 \wedge \hdots \wedge v_d$ as $\sum_{I} p_I(L)e_I \in \bigwedge^d\R^n$.  
The coefficients $p(L) = (p_I(L))_{I\in \binom{n}{d}}$, known as the Pl\"ucker coordinates of $L$, 
are independent of the basis $\{v_1, \hdots, v_d\}$ of $L$ up to global scaling. 
The map that sends ${\rm span}\{v_1, \hdots, v_d\} $ to $[v_1 \wedge \hdots \wedge v_d]$
gives the Pl\"ucker embedding of the Grassmannian $\Gr_{\R}(d,n)$ into $ \P(\bigwedge^d \R^n)\cong \P^{\binom{n}{d}-1}(\R)$.

Given a $(d-1)$-dimensional variety $X\subset \P^{n-1}(\C)$, the collection of linear spaces $L$ that intersect 
$X$ form a hypersurface in $\Gr(n-d,n)$.  The element of the coordinate ring of $\Gr(n-d,n)$ 
defining this hypersurface is known as the  {\bf Chow form} of $X$, after \cite{Chow}. The Chow form can by represented by a polynomial 
in the Pl\"ucker coordinates of $L$ whose degree equals $\deg(X)$.  See, for example, \cite{ChowIntro}.  
We compute the Chow form of $\L^{-1}$ in Section~\ref{sec:detreps}.

It will often be convenient to identify $\bigwedge^{d} (\R^{n})$ with $(\bigwedge^{n-d} \R^{n})^{\vee}$ as follows. 
We can identify $\R$ with $\bigwedge^n \R^n$ via the isomorphism $\lambda \mapsto \lambda \cdot (e_1 \wedge \cdots \wedge e_n)=\lambda \cdot e_{[n]}$.
The resulting pairing
 \[\bigwedge\nolimits^d \R^n \times \bigwedge\nolimits^{n-d} \R^n  \  \to  \ \bigwedge\nolimits^n \R^n \ \ \text{ given by } \ \ 
 (\alpha, \beta) \mapsto \alpha \wedge \beta\]
provides an identification of $\bigwedge\nolimits^{n-d} \R^n$ with the dual space 
$(\bigwedge\nolimits^{d} \R^n)^{\vee}$.
This identifies the dual basis $\{e_I^*: I\in \binom{[n]}{d}\}$ of $(\bigwedge\nolimits^{d} \R^n)^{\vee}$ with the basis 
$\{\pm e_J : J \in \binom{[n]}{n-d}\}$ of $\bigwedge\nolimits^{n-d} \R^n $, specifically 
\[ e_I^* = (-1)^{s(I)}e_{[n]\backslash I}  \ \  \text{ where } \ \ s(I) = \frac{1}{2}d(d+1) + \sum_{i\in I}i.\]

Consider $\varphi \in \bigwedge^{d} \R^{n} \otimes \Sym_k(\R)$, where $\Sym_k(\R)$ denotes the space of $k\times k$ real symmetric matrices. 
Via the identification of $\bigwedge^{d} (\R^{n})$ with $(\bigwedge^{n-d} \R^{n})^{\vee}$, 
we can identify $\varphi$ with a $k \times k$ matrix, whose entries are linear forms on $\bigwedge^{n-d} \R^{n}$. 
The operator $\varphi$ is {\bf nondegenerate} if there exist vectors $v_1, \ldots, v_{n-d} \in \R^n$ defining 
$\beta = v_1 \wedge \ldots \wedge v_{n-d} \in \bigwedge^{n-d} \R^{n}$
for which the matrix $\varphi \wedge \beta \in \Sym_k(\R)$ is invertible.

\begin{Def} \label{dfn:Livsic-type_det_rep}
Let $X \subseteq \P^{n-1}(\C)$ be a real projective variety of dimension $d-1$. 
We say that $\varphi \in \bigwedge^{d} \R^{n} \otimes \Sym_k(\R)$
is a {\bf Liv{\v{s}}ic-type determinantal representation} of $X$ if 
$\varphi$ is nondegenerate and $X$ equals the set of points $p \in \P^{n-1}(\C)$ for which $\varphi \wedge p$ has a non-trivial 
kernel, when considered as a linear map from $\C^k$ to $\bigwedge^{d+1}\C^{n} \otimes \C^k$.
We say that $\varphi$ is {\bf definite} at a linear space $L = {\rm span}\{v_1, \hdots,  v_{n-d}\} \subset \R^n$ 
if the matrix $\varphi \wedge v_1\wedge \hdots \wedge v_{n-d}\in \Sym_k(\R)$ is (positive or negative) definite. 
\end{Def}

As observed in \cite{SV},  Liv{\v{s}}ic-type determinantal representations of $X\subset \P^{n-1}(\C)$
of the smallest size give determinantal representations of the Chow form of $X$. 

\begin{prop}\label{prop:livsicchow}
If $\varphi\in\bigwedge^{d} \R^{n} \otimes \Sym_k(\R)$ is a Liv{\v{s}}ic-type determinantal representation of 
a variety $X\subset \P^{n-1}(\C)$ of dimension $d-1$ and degree $k$, then the determinant of 
$\varphi\wedge \beta$, considered as a polynomial in $\R[\beta_{I} : I\in \binom{[n]}{n-d}]$, defines the Chow form of $X$. 
\end{prop}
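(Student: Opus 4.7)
The plan is to identify $\det(\varphi \wedge \beta)$ as a nonzero section of $\mathcal{O}_{\Gr(n-d,n)}(k)$ whose zero locus is the Chow hypersurface of $X$, and then match degrees with the Chow form to conclude equality up to a nonzero scalar.

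First I would unpack the notation. Writing $\varphi = \sum_{I \in \binom{[n]}{d}} e_I \otimes \varphi_I$ with each $\varphi_I \in \Sym_k(\R)$, the identity $e_I \wedge e_J = (-1)^{s(I)}\, e_{[n]}$ for $J = [n]\setminus I$ (and zero otherwise) shows that, under the isomorphism $\bigwedge^n \R^n \cong \R$, the wedge product collapses to the symmetric matrix
\[
\varphi \wedge \beta \ = \ \sum_{I \in \binom{[n]}{d}} (-1)^{s(I)}\, \beta_{[n]\setminus I}\, \varphi_I \ \in \ \Sym_k(\R).
\]
Its entries are linear forms in $\{\beta_J : J \in \binom{[n]}{n-d}\}$, so $\det(\varphi \wedge \beta)$ is homogeneous of degree $k$ in these variables. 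Nondegeneracy of $\varphi$ provides a decomposable $\beta_0 = v_1 \wedge \cdots \wedge v_{n-d}$ at which this matrix is invertible, so $\det(\varphi \wedge \beta)$ restricts to a nonzero polynomial on $\Gr(n-d,n)$.

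The geometric heart of the argument is to show that for any decomposable $\beta = v_1 \wedge \cdots \wedge v_{n-d}$ with $L = {\rm span}(v_1, \ldots, v_{n-d})$, the vanishing $\det(\varphi \wedge \beta) = 0$ is equivalent to $L \cap X \neq \emptyset$. For the implication $(\Leftarrow)$, take any $p \in L \cap X$; by the defining property of $\varphi$ there is a nonzero $u \in \C^k$ with $(\varphi \wedge p)\, u = 0$ in $\bigwedge^{d+1}\C^n \otimes \C^k$. Because $p$ lies in $L$, after a change of basis we may assume $\beta = \pm\, p \wedge \tilde\beta$ for some $\tilde\beta \in \bigwedge^{n-d-1}\R^n$; wedging the above vanishing relation on the right with $\tilde\beta$ yields $(\varphi \wedge \beta)\, u = 0$ in $\bigwedge^n\C^n \otimes \C^k \cong \C^k$, hence $\det(\varphi \wedge \beta) = 0$.

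To finish, I would invoke a degree/divisibility argument. The Chow form $\Ch(X)$ is by definition a polynomial on $\Gr(n-d,n)$ of degree $\deg(X) = k$ cutting out the hypersurface $\{L : L \cap X \neq \emptyset\}$. The previous step shows $\det(\varphi \wedge \beta)$ is a nonzero section of $\mathcal{O}_{\Gr(n-d,n)}(k)$ vanishing on this same hypersurface, so each irreducible factor of $\Ch(X)$ (which is irreducible in the coordinate ring of $\Gr(n-d,n)$) divides $\det(\varphi \wedge \beta)$; since both have degree $k$, they agree up to a nonzero scalar, and this automatically recovers the reverse implication $(\Rightarrow)$. The main obstacle is the geometric step showing $p \in L \cap X \Rightarrow \det(\varphi \wedge \beta) = 0$ and keeping the identifications and signs between $\bigwedge^d \R^n \otimes \Sym_k(\R)$, its wedges with $\bigwedge^{n-d}\R^n$, and the resulting symmetric matrix consistent throughout; once this is handled, the rest is a clean application of the degree comparison.
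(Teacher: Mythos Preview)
Your proposal is correct and follows essentially the same route as the paper: show that $\det(\varphi\wedge\beta)$ vanishes whenever $L\cap X\neq\emptyset$ by producing a kernel vector from $(\varphi\wedge p)u=0$ and wedging through, then compare degrees to conclude equality with the Chow form. Your explicit invocation of nondegeneracy to guarantee $\det(\varphi\wedge\beta)$ is not identically zero on the Grassmannian is a point the paper leaves implicit; one small quibble is that when $p\in X$ is complex, your $\tilde\beta$ should live in $\bigwedge^{n-d-1}\C^n$ rather than $\R^n$, but this does not affect the argument.
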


\begin{proof}
By definition, for any point $p\in X$, there is a non-zero vector $ u_p \in \C^k$ in the kernel of the linear 
map given by $\varphi\wedge p$.  It follows that for any $\gamma \in \bigwedge^{n-d-1}\C^n$, the vector $u_p$ 
belongs to the kernel of the matrix $\varphi\wedge p \wedge \gamma \in \Sym_k(\C)$ and 
$\det(\varphi\wedge p \wedge \gamma) = 0$. 

If $\beta \in \bigwedge^{n-d}\R^n$ is the vector of the 
Pl\"ucker coordinates of any linear space $L$ containing the point $p$
we can write $\beta$ as $p\wedge  v_2 \wedge \hdots \wedge v_{n-d}$ for some $v_j \in \R^n$. 
In particular, $\det(\varphi\wedge \beta) =0$. 
Therefore the hypersurface in $\Gr(n-d,n)$ defined by $\det(\varphi\wedge \beta)=0$ contains the hypersurface 
defined by the vanishing of the Chow form of $X$. Since both hypersurfaces have degree $k$, they must be equal.
\end{proof}

\begin{remark}
 Let $X \subseteq \P^{n-1}(\C)$ be a real $(d-1)$-dimensional variety of degree $k$.
If $X$ has a Liv{\v{s}}ic-type determinantal representation
 $\varphi \in \bigwedge^{d} (\R^{n}) \otimes \Sym_k(\R)$ that is definite at a linear subspace
 $L$, then $X$ is hyperbolic with respect to $L$, cf. \cite[Prop. 3.12]{SV}.
\end{remark}

\begin{example}($d=2$, $n=4$, $k=3$) 
Consider $\varphi \in \bigwedge^{2} \R^{4} \otimes \Sym_3(\R)$ given by
\[
\varphi = 
\begin{pmatrix}
e_{12}& e_{13}& e_{14} \\ 
e_{13} & e_{14}+ e_{23} & e_{24} \\  
e_{14} & e_{24} & e_{34}
\end{pmatrix}, 
\ \ \text{ with } \ \ 
\varphi \wedge \beta = 
\begin{pmatrix}
\beta_{34} & -\beta_{24}& \beta_{23} \\ 
-\beta_{24} & \beta_{23}+ \beta_{14} & -\beta_{13} \\  
\beta_{23} & -\beta_{13} & \beta_{12}
\end{pmatrix}
\]
for all $\beta = \sum_{i<j} \beta_{ij} e_{ij} \in \bigwedge^2\R^4$.  Since the matrix 
$\varphi\wedge e_2\wedge e_3$ has full rank, $\varphi$ is nondegenerate. 
Furthermore, $\varphi$ has the property that for $v_1, v_2\in \C^4$, the matrix 
$\varphi\wedge v_1 \wedge v_2 \in \Sym_3(\C)$ drops rank if and only if the projective line 
spanned by $v_1$ and $v_2$ intersects the twisted cubic, $X = \{[s^3:s^2t:st^2:t^3]:[s:t]\in \P^1(\C)\}$.  
Not only does the matrix $\varphi\wedge v_1 \wedge v_2$ drop rank, 
but its kernel depends only on the intersection point $p$ of the line with $X$. In this case, 
$\varphi \wedge p$, considered as the linear map $\C^3 \rightarrow \bigwedge^3\C^4 \otimes \C^3$ given by 
\[
\begin{pmatrix} u_1 & u_2 & u_3 \end{pmatrix}
 \ \ \mapsto \ \
\begin{pmatrix}
e_{12}\wedge p& e_{13}\wedge p& e_{14}\wedge p \\ 
e_{13}\wedge p & e_{14}\wedge p+ e_{23}\wedge p & e_{24}\wedge p \\  
e_{14} \wedge p& e_{24}\wedge p & e_{34}\wedge p
\end{pmatrix} \begin{pmatrix} u_1 \\  u_2 \\  u_3 \end{pmatrix}, 
\]
has a nontrivial kernel, namely if $p=[s^3:s^2t:st^2:t^3]$, then $(s^2,st,t^2)$ is in the kernel.
Therefore $\varphi$ is a Liv{\v{s}}ic-type determinantal representation of the twisted cubic. 
The determinant $\det(\varphi\wedge\beta)$ is the Chow form of $X$, which is classically known to be 
the resultant of two binary cubics \cite[\S1.2]{ChowIntro}.  A reciprocal line in $\P^3$ is also a rational cubic curve, and we find
a related formula for the resultant in Corollary~\ref{cor:resultant}. \end{example}

\section{Determinantal representations of reciprocal linear spaces}\label{sec:detreps}
The goal of this section is to construct a definite Liv{\v{s}}ic-type determinantal representation 
of a reciprocal linear space, which gives a determinantal representation to its Chow form. 

Define the \textbf{support} ${\rm supp}(\alpha) \subseteq \binom{[n]}{d}$ of a non-zero point $\alpha =\sum_I \alpha_I e_I\in \bigwedge^d \R^n$ to be the 
set of $I \in \binom{[n]}{d}$ for which $\alpha_I \neq 0$. Let $\1=\sum_{i=1}^n e_i$.
For any subset $\B \subseteq \binom{[n]}{d}$ consider
the linear subspace  $\H_{\B}$ of $\bigwedge^d \R^n$ defined as 
\begin{equation}\label{eq:HB}
 \H_{\B} \ = \ {\rm span}\left\{\gamma \wedge \1 \ : \, \gamma \in \bigwedge\nolimits^{d-1} \R^n \right\} \ \cap  \ 
\bigl\{\alpha : {\rm supp}(\alpha) \subseteq  \B\bigl\}.
\end{equation}
The  vector of Pl\"ucker coordinates $p(L)$  of a linear space $L$ belongs to $\H_{\B}$ if and only if $L$ contains the vector $\1$ and ${\rm supp}(p(L)) \subseteq \B$.  This condition is convenient when considering reciprocal linear spaces, since $\1 \in \L$ if and only if $1\in \L^{-1}$. 

Consider the inclusion $\iota_{\B}: \H_{\B} \hookrightarrow \bigwedge^d \R^n$ and its dual 
$\rb: (\bigwedge^{d} \R^n)^{\vee} \to \H_{\B}^{\vee}$, which restricts a linear form on $\bigwedge^{d} \R^n$ to a linear form on the subspace 
$\H_{\B}$. For any $\alpha \in \bigwedge^d\R^n$ with $\B = {\rm supp}(\alpha)$, we will define a linear map 
$\varphi_{\alpha}: (\bigwedge^{d} \R^n)^{\vee} \to \Sym(\H_\B)$, where $\Sym(\H_\B) \subset \H_\B^{\vee}\otimes  \H_\B^{\vee}$  
denotes the space of symmetric bilinear forms on $\H_\B$. 
It suffices to define $\varphi_{\alpha}$ on the basis $\{e_{I}^* : I\in \binom{[n]}{d}\}$ of $(\bigwedge^{d} \R^n)^{\vee}$:
\begin{equation}\label{eq:phiAlpha}
\varphi_\alpha(e_{I }^*) \  = \ \begin{cases}
                          0 & \textrm{ if }I \notin  \B,  \\
                         \alpha_{I}^{-1} \cdot \rb(e_{I}^*) \otimes \rb(e_{I}^*), & \textrm{ if } I \in \B.
                         \end{cases}
\end{equation}
The linear map $\varphi_{\alpha}$ sends $\beta \in (\bigwedge^{d} \R^n)^{\vee} \cong \bigwedge^{n-d} \R^n $ 
to a symmetric bilinear form $\varphi_{\alpha}(\beta)$ on $\H_{\B}$.  Indeed, if $\beta \in\bigwedge^{n-d} \R^n $,  
then for any $p =  \sum_{I\in \B }p_Ie_I$, $ q =  \sum_{I\in \B }q_Ie_I \in \H_\B$, we have that
\begin{equation} \label{eq:phiBilinear}
\varphi_{\alpha}(\beta)(p,q) 
\ \ = \ \ \sum_{I\in \B} \alpha_I^{-1}\cdot (e_I \wedge \beta) \cdot p_I \cdot q_I  
\ \ \in  \ \R.
\end{equation}
This identifies $\varphi_{\alpha}$ with an element of $\bigwedge^{d} \R^n \otimes \Sym(\H_\B)$. 
If $k = \dim(\H_{\B})$, then picking a basis for $\H_{\B}$ writes $\varphi_{\alpha}$ as an element of 
$\bigwedge^{d} \R^n\otimes \Sym_k(\R) $. 
\

If $[\alpha] = p(\L)$ for some real linearspace $\L\in \Gr(d,n)$, 
then  $\varphi_\alpha$ is non-degenerate and positive definite at the the orthogonal complement $\L^{\perp}$.  Indeed, 
 $[\beta] = p(\L^{\perp})\in \P(\bigwedge^{n-d}\R^n)$ 
 is identified with the point $[\alpha^*] = [\sum_I \alpha_I e_I^*]$ in $\P((\bigwedge^d \R^n)^\vee)$, and 
  \[ \varphi_{\alpha}(\beta)  \ \ = \ \   \varphi_{\alpha}(\alpha^*) \ \ =  \ \  \sum_{I \in \B} \rb(e_I^*) \otimes  \rb(e_I^*),\]
 which is  positive definite on $\H_{\B}$. Therefore  $\varphi_\alpha$ is definite at $\L^{\perp}$, and, in particular, non-degenerate.
We will see  that $\varphi_\alpha$ is actually a determinantal representation of $\L^{-1}$.

\begin{thm}\label{thm:detrep}
 Let $\L \subseteq \P^{n-1}(\C)$ be a real linear subspace of dimension $d-1$ that is not contained
 in any coordinate hyperplane. Let $[\alpha] = p(\L) \in \P(\bigwedge^d \R^n)$ be the vector of Pl\"ucker coordinates of $\L$ and $\B={\rm supp}(\alpha)$. 
 Then $k= \dim(\H_\B) =  \deg(\L^{-1})$ and the map $\varphi_\alpha$ defined in \eqref{eq:phiAlpha}, as considered of an element of $\bigwedge^d \R^n \otimes \Sym_k (\R)$,
 is a symmetric Liv{\v{s}}ic-type determinantal representation of $\L^{-1}$ that is definite at the linear space $\L^{\perp} \in \Gr(n-d,n)$.
\end{thm}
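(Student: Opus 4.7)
My plan is to prove the theorem in three steps. Definiteness of $\varphi_\alpha$ at $\L^{\perp}$ (and hence its nondegeneracy) is already noted in the paragraph preceding the theorem: $\varphi_\alpha(p(\L^{\perp})) = \sum_{I \in \B}\rb(e_I^*) \otimes \rb(e_I^*)$ is positive definite on $\H_\B$. What remains is to show that $\varphi_\alpha$ is a determinantal representation of $\L^{-1}$ and that $\dim(\H_\B) = \deg(\L^{-1})$. The plan is to first construct an explicit kernel vector showing $\L^{-1} \subseteq Z := \{p \in \P^{n-1}(\C) : \varphi_\alpha \wedge p \text{ drops rank}\}$; second, establish the degree equality $\dim \H_\B = \deg \L^{-1}$; and third, conclude $Z = \L^{-1}$. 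The principal obstacle will be the degree equality.

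For the first step, I take $p \in \L^{-1}$ with all coordinates nonzero and write $p_i = 1/x_i$ for some $x \in \L \cap (\C^*)^n$. Letting $\widetilde{D}_p$ denote the induced action of $\diag(p)$ on $\bigwedge^* \R^n$, I set $q \in \bigwedge^d \R^n$ by $q_I = \alpha_I \prod_{i \in I} p_i$, so that $q = \widetilde{D}_p(\alpha)$ is the Pl\"ucker vector of $\diag(p) \cdot \L$. Since this subspace contains $\diag(p) \cdot x = \1$, one has $q \wedge \1 = 0$; combined with the (automatic) support condition ${\rm supp}(q) = \B$, this places $q \in \H_\B \setminus \{0\}$. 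Using the identity $(\prod_{i \in I} p_i)(e_I \wedge p) = \widetilde{D}_p(e_I \wedge \1)$, a direct computation gives
\[
(\varphi_\alpha \wedge p)(q, u') \ = \ \sum_{I \in \B} \Bigl(\prod_{i \in I} p_i\Bigr)(e_I \wedge p)\,u'_I \ = \ \widetilde{D}_p(u' \wedge \1) \ = \ 0
\]
for every $u' \in \H_\B$, since $u' \wedge \1 = 0$. Zariski closure, together with the hypothesis that $\L$ lies in no coordinate hyperplane, then yields $\L^{-1} \subseteq Z$.

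For the degree equality, one inequality is straightforward: $\det(\varphi_\alpha \wedge \beta)$ is a nonzero polynomial of degree $k = \dim \H_\B$ in $\R[\beta_I]$ (nonzero because $\varphi_\alpha$ is definite at $\L^{\perp}$), and by the argument of Proposition~\ref{prop:livsicchow} the irreducible Chow form of $\L^{-1}$ divides this determinant modulo the Pl\"ucker relations, giving $\deg \L^{-1} \leq k$. For the reverse inequality, I would match both sides to $|\mu(M(\L))|$, the absolute value of the M\"obius number of the matroid. The degree side is Proudfoot--Speyer's theorem. For the identity $\dim \H_\B = |\mu(M(\L))|$ one can either try to identify $\H_\B$ with the top graded piece of the Orlik--Solomon algebra of $M(\L)$ (whose dimension is $|\mu|$) via the wedge-product pairing on $\bigwedge^d \R^n$, or exploit hyperbolicity directly: Varchenko's theorem supplies $|\mu(M(\L))|$ distinct real intersection points in $\L^{-1} \cap (\L^{\perp} + u)$ for generic $u \in \R^n$, and verifying linear independence of the associated kernel vectors from the first step yields $\dim \H_\B \geq |\mu(M(\L))|$. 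This matroid-combinatorial identity is the heart of the argument.

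Once the degrees agree, $\det(\varphi_\alpha \wedge \beta)$ coincides up to a nonzero scalar with the Chow form of $\L^{-1}$ modulo the Pl\"ucker relations, so a projective linear subspace $L$ of dimension $n-d-1$ meets $\L^{-1}$ if and only if $\varphi_\alpha \wedge \beta_L$ drops rank. In the hypersurface case $d = n-1$ this directly gives $Z = \L^{-1}$. When $d < n-1$, I argue by contradiction: if some $p \in Z \setminus \L^{-1}$ had all coordinates nonzero, every $(n-d)$-dimensional linear subspace of $\C^n$ containing $p$ would meet $\L^{-1}$, forcing the image $\pi_p(\L^{-1}) \subset \P^{n-2}$ under projection from $p$ to intersect every $(n-d-2)$-dimensional projective linear subspace of $\P^{n-2}$, which contradicts $\dim \pi_p(\L^{-1}) \leq d - 1 < d$. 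Hence $Z = \L^{-1}$, confirming that $\varphi_\alpha$ is a Liv{\v{s}}ic-type determinantal representation of $\L^{-1}$, definite at $\L^{\perp}$.
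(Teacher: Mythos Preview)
Your kernel-vector construction in the first step is correct and essentially reproduces the paper's Lemma~\ref{lem:wedge2}: the vector $q = \diag_p(\alpha)$ lies in $\H_\B$ and is annihilated by $\varphi_\alpha \wedge p \wedge \delta$ for every $\delta$. Your argument that the Chow form of $\L^{-1}$ divides $\det(\varphi_\alpha \wedge \beta)$, giving $\deg \L^{-1} \leq k$, parallels the paper's invocation of \cite[Cor.~2.13]{SV}. Your projection argument for $Z = \L^{-1}$ in the final step is a pleasant self-contained alternative to the paper's appeal to the general Liv{\v{s}}ic-type theory of \cite{SV} (though the restriction to $p$ with all coordinates nonzero is unnecessary and should be dropped; the argument works verbatim for any $p \in Z \setminus \L^{-1}$, and without that you have not actually excluded extra components of $Z$ on coordinate hyperplanes).

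The genuine gap is the reverse inequality $\dim \H_\B \leq \deg \L^{-1}$, which you correctly flag as the heart of the matter but do not establish. Your approach (b) points the wrong way: exhibiting $|\mu|$ linearly independent kernel vectors in $\H_\B$ yields $\dim \H_\B \geq |\mu| = \deg \L^{-1}$, which is the direction you already have from Chow-form divisibility, not the one you need. Your approach (a), identifying $\H_\B$ with the top graded piece of the Orlik--Solomon algebra, is plausible in spirit but is only asserted; making it precise requires producing an explicit isomorphism and checking that the defining conditions of $\H_\B$ correspond to the OS relations, which is not immediate. The paper instead supplies a direct combinatorial bound (Lemmas~\ref{lem:dimHB} and~\ref{claim:dimH1}): using a partial order on bases in which $B < B'$ when $B' \setminus B$ is the maximum element of the unique circuit in $B \cup B'$, one shows that the restrictions $\{\rb(e_{[n]\setminus F}) : F \in \F\}$ span $\H_\B^\vee$, where $\F$ is the set of facets of the broken circuit complex, whence $\dim \H_\B \leq |\F| = \deg \L^{-1}$. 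This self-contained combinatorial lemma is the missing ingredient in your proposal.
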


We will build up to the proof of Theorem~\ref{thm:detrep} on page~\pageref{proof:detrep}. 
The general strategy is to find a kernel of $\varphi_{\alpha}(\beta)$ when $\alpha, \beta$ 
corresponding to a linear spaces $\L, \M$ both containing the all-ones vector $\1$.
This a convenient condition because $\1\in \L$ if and only if $\1\in \L^{-1}$. 
We will then use the action of the torus $(\R^*)^n$ to translate to a kernel of $\varphi_{\alpha}(\beta)$
when $\L^{-1}$ and $\M$ intersect in an arbitrary point. 
Some matroid theory is also needed to prove $\dim(\H_\B) =  \deg(\L^{-1})$.

\begin{lem}\label{lem:wedge1}
 Let $\B\subseteq \binom{[n]}{d}$ and suppose that $\alpha \in \H_{\B} \subset \bigwedge^d\R^n$ where $\B = {\rm supp}(\alpha)$.
 Then for every $\delta \in \bigwedge^{n-d-1} \R^n$, $\alpha$ belongs to the kernel of the bilinear form 
 $\varphi_{\alpha}(\delta \wedge \1)$.
 \end{lem}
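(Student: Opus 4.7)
The plan is to unfold the bilinear description of $\varphi_{\alpha}$ given in equation \eqref{eq:phiBilinear}, evaluate the form $\varphi_{\alpha}(\delta\wedge\1)$ at a pair $(p, \alpha)$ with $p\in\H_{\B}$, and watch the expression telescope to a wedge product that manifestly vanishes.

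Concretely, I would substitute $\beta = \delta\wedge\1$ and $q = \alpha$ into \eqref{eq:phiBilinear}. Since $\B = \mathrm{supp}(\alpha)$, the coordinate $q_I = \alpha_I$ is nonzero exactly on $\B$, and the diagonal factors $\alpha_I^{-1}\cdot\alpha_I$ cancel, leaving
\[
\varphi_{\alpha}(\delta\wedge\1)(p,\alpha)
\;=\; \sum_{I\in\B} p_I\,(e_I\wedge\delta\wedge\1).
\]
Because $p\in\H_{\B}$ has $\mathrm{supp}(p)\subseteq\B$, extending the summation to all $I\in\binom{[n]}{d}$ does not change anything, and the sum collapses to the single wedge product $p\wedge\delta\wedge\1 \in \bigwedge^n\R^n$.

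The key step is then the observation that the defining condition of $\H_{\B}$ in \eqref{eq:HB} writes $p = \gamma\wedge\1$ for some $\gamma\in\bigwedge^{d-1}\R^n$. Hence
\[
\varphi_{\alpha}(\delta\wedge\1)(p,\alpha) \;=\; \gamma\wedge\1\wedge\delta\wedge\1 \;=\; \pm\,\gamma\wedge\delta\wedge\1\wedge\1 \;=\; 0,
\]
after anticommuting $\1$ past $\delta$ and using $\1\wedge\1 = 0$. Since $p\in\H_{\B}$ was arbitrary, $\alpha$ lies in the kernel of $\varphi_{\alpha}(\delta\wedge\1)$, as claimed.

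There is really no obstacle here: the lemma is a short calculation whose only subtlety is recognizing that the two appearances of $\1$ in $\H_{\B}$ (coming from $p$) and in $\beta$ (coming from the hypothesis on $\delta\wedge\1$) are what force the wedge to vanish. The hypothesis $\alpha\in\H_{\B}$ itself is not used beyond guaranteeing that the bilinear expression makes sense with $q=\alpha$; the vanishing is driven entirely by the structure of $\H_{\B}$ and the identity $\1\wedge\1 = 0$.
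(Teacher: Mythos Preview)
Your argument is correct and follows essentially the same route as the paper: plug $\alpha$ into one slot of the bilinear form, cancel the factors $\alpha_I^{-1}\alpha_I$, recognize the remaining sum as $p\wedge(\delta\wedge\1)$, and then use that $p\in\H_\B$ can be written as $\gamma\wedge\1$ so that the wedge contains $\1\wedge\1=0$. The only cosmetic difference is that the paper inserts $\alpha$ in the first argument rather than the second, which is immaterial since the form is symmetric.
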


\begin{proof}
For any $\beta \in \bigwedge^{n-d}\R^n$, $\varphi_\alpha(\beta)$ is a symmetric bilinear form on $\H_\B$. 
Plugging $\alpha$ into the first coordinate results in a linear form on $\H_{\B}$ given by  $q \mapsto q\wedge \beta$. 
Indeed, for any element $q = \sum_{I\in \B}q_Ie_I$ of $\H_\B$, we have
\[ \varphi_{\alpha}(\beta)(\alpha, q)  \ \ = \ \ \sum_{I\in \B} \alpha_I^{-1} \cdot (e_I\wedge \beta) \cdot \alpha_I \cdot q_I 
\ \ = \ \ \sum_{I\in \B} (e_I\wedge \beta) \cdot q_I  \ \   = \ \  q \wedge \beta.\]
Now suppose $\beta = \delta \wedge \1$ for some $\delta \in \bigwedge^{n-d-1}\R^n$.
Since $\H_\B$ is contained in the the span of vectors $\{\gamma\wedge \1 :\gamma \in \bigwedge^{d-1}\R^n\}$, 
we see that $q \wedge \beta = 0$ for all $q\in \H_\B$.  As a linear form on $\H_\B$, $q\mapsto \varphi_\alpha(\beta)(\alpha, q)$ is identically zero, 
meaning that $\alpha$ belongs to the kernel of $\varphi_{\alpha}(\beta)$. 
\end{proof}

Consider the map  ${\rm diag}_w: \R^n \rightarrow \R^n$ given by scaling the $i$th coordinate by $w_i$, $e_i \mapsto w_ie_i$. 
This extends to the linear map ${\rm diag}_w: \bigwedge \R^n \rightarrow  \bigwedge \R^n$ given by $e_I\mapsto (\prod_{i\in I}w_i )e_I$.

\begin{lem}\label{lem:wedge2}
Let $[\alpha] = p(\L) \in \P(\bigwedge^d \R^n)$ be the vector of Pl\"ucker coordinates of $\L$ and $\B={\rm supp}(\alpha)$. 
Suppose that $(w_1^{-1},\ldots, w_n^{-1}) \in \L$ for some $w \in (\R^*)^n$. 
 Then for every $\delta \in \bigwedge^{n-d-1} \R^n$, 
 $\diag_w(\alpha)\in \H_\B$ belongs to the kernel of the bilinear form $\varphi_{\alpha}(\delta \wedge w)$.
\end{lem}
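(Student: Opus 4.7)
The plan is to adapt the proof of Lemma~\ref{lem:wedge1} by introducing $\alpha' := \diag_w(\alpha)$, which plays the role that $\alpha$ played there. First, I would check that $\alpha' \in \H_\B$. The support is preserved by $\diag_w$ since every $w_i$ is nonzero, so $\textnormal{supp}(\alpha') = \B$. Since $w^{-1} \in \L$ and $\dim \L = d$, we may choose a basis $v_1,\ldots,v_{d-1},w^{-1}$ of $\L$, so that (up to a nonzero scalar) $\alpha = \gamma \wedge w^{-1}$ with $\gamma = v_1\wedge\cdots\wedge v_{d-1}$. Applying $\diag_w$, which is an algebra endomorphism of $\bigwedge \R^n$, gives $\alpha' = \diag_w(\gamma)\wedge \diag_w(w^{-1}) = \diag_w(\gamma)\wedge \1$, which lies in $\textnormal{span}\{\mu \wedge \1 : \mu \in \bigwedge^{d-1}\R^n\}$.

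Next, for arbitrary $\beta \in \bigwedge^{n-d}\R^n$ and $q = \sum_{I\in\B}q_Ie_I \in \H_\B$, I would evaluate $\varphi_\alpha(\beta)(\alpha', q)$ directly from \eqref{eq:phiBilinear}. Since $\alpha'_I = \bigl(\prod_{i\in I}w_i\bigr)\alpha_I$, the factors of $\alpha_I^{-1}$ cancel and the sum collapses to
\[
\varphi_\alpha(\beta)(\alpha', q) \;=\; \sum_{I\in\B}\Bigl(\prod_{i\in I}w_i\Bigr) q_I\,(e_I\wedge \beta) \;=\; \diag_w(q)\wedge \beta.
\]
This is the analogue, with $q$ replaced by $\diag_w(q)$, of the identity $\varphi_\alpha(\beta)(\alpha,q) = q\wedge \beta$ that was the key step of Lemma~\ref{lem:wedge1}.

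To conclude, I would specialize to $\beta = \delta \wedge w$ and use that $w = \diag_w(\1)$ together with the fact that $\diag_w$ is an algebra endomorphism of $\bigwedge\R^n$ (which on $\bigwedge^n\R^n \cong \R$ acts by multiplication by $\prod_i w_i \neq 0$) to rewrite
\[
\diag_w(q)\wedge \delta \wedge w \;=\; \diag_w\!\bigl(q \wedge \diag_{w^{-1}}(\delta)\wedge \1\bigr).
\]
Using $q\in\H_\B$, write $q = \sum_k \mu_k \wedge \1$; then $q \wedge \diag_{w^{-1}}(\delta) \wedge \1$ is a sum of terms each containing the factor $\1\wedge \1 = 0$, so the inner expression vanishes. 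Hence $\varphi_\alpha(\delta\wedge w)(\alpha', q) = 0$ for every $q\in \H_\B$, which is exactly the claim that $\alpha'$ lies in the kernel of the symmetric bilinear form $\varphi_\alpha(\delta \wedge w)$. The only real subtlety is the bookkeeping that transports the outer $w$ inside the wedge as a $\1$, after which the argument from Lemma~\ref{lem:wedge1} applies unchanged.
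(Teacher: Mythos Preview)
Your proof is correct and follows essentially the same approach as the paper. The only organizational difference is that the paper first establishes the global identity $\varphi_{\alpha'}(\diag_{w^{-1}}\beta)=\bigl(\prod_i w_i^{-1}\bigr)\varphi_\alpha(\beta)$ and then invokes Lemma~\ref{lem:wedge1} as a black box, whereas you plug $\alpha'$ into the first slot immediately and rerun the Lemma~\ref{lem:wedge1} computation in the rescaled setting; both routes rest on the same two ingredients (that $\diag_w$ is an algebra endomorphism of $\bigwedge\R^n$ and that $\1\wedge\1=0$).
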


\begin{proof}
Let $\alpha' = \diag_{w}(\alpha)$. First let us check that $\alpha'\in \H_\B$.
First note that $\alpha$ and $\alpha'$ have the same support, which is $\B$. 
Since $w^{-1}\in \L$, $\alpha$ can be written as $\gamma\wedge w^{-1}$ for some $\gamma \in \bigwedge^{d-1}\R^n$. 
Then $\alpha' = \diag_{w}(\alpha)$ equals $\diag_w(\gamma)\wedge \1$ and belongs to $\H_\B$. 

Now let $\beta \in \bigwedge^{n-d}\R^n$ and let $\beta'$ denote the image of $\beta$ under diagonal action by $w^{-1}$, 
 $\beta' = {\rm diag}_{w^{-1}}(\beta)$. 
 We claim $\varphi_{\alpha'}(\beta')$ is a scalar multiple of $\varphi_{\alpha}(\beta)$, 
 specifically, for $p,q\in \H_\B$, 
\begin{align*}
\varphi_{\alpha'}(\beta')(p,q)  
&  \  = \  \sum_{I\in \B} \frac{(e_I\wedge \beta')}{\alpha_I'} p_Iq_I  
\ = \  \sum_{I\in \B} \frac{\prod_{j\in [n]\backslash I }w_j^{-1} (e_I\wedge \beta)}{\prod_{i\in I}w_i \alpha_I} p_Iq_I \\
&  \ = \ \prod_{i\in [n]}w_i^{-1}  \cdot  \sum_{I\in \B} \frac{(e_I\wedge \beta)}{\alpha_I} p_Iq_I 
\ = \ \prod_{i\in [n]}w_i^{-1}  \cdot  \varphi_{\alpha}(\beta)(p,q).
 \end{align*}
Now suppose $\beta = \delta \wedge w$ for some $\delta \in \bigwedge^{n-d-1} \R^n$
and let $\delta' $ denote $ {\rm diag}_{w^{-1}}(\delta)$.
Then $\beta' $ equals $ {\rm diag}_{w^{-1}}(\delta)\wedge {\rm diag}_{w^{-1}}(w) = \delta'\wedge \1$.
The equation above then implies that $\varphi_{\alpha}( \delta \wedge w)$ is a scalar multiple of $\varphi_{\alpha'}( \delta'\wedge \1)$. 
By Lemma~\ref{lem:wedge1}, $\alpha' ={\rm diag}_{w}(\alpha)$ belongs to its kernel. 
\end{proof}

To understand the dimension of $\H_\B$ and the behavior of the map $\varphi_\alpha$, we need to introduce
some matroid theory. 
The matroid associated to $\L\in \Gr(d,n)$ is the matroid of rank-$d$ on $n$ elements represented by the restriction of 
the linear forms $e_1^{\vee}, \hdots, e_n^{\vee}\in (\R^n)^{\vee}$ to $\L$.  If we write $\L$ as the rowspan of a $d\times n$ matrix, then 
this is the matroid on its $n$ columns.  
In \cite{PS}, Proudfoot and Speyer show that there is a flat degeneration of the homogeneous coordinate ring of $\L^{-1}$ to the Stanley-Resiner ideal 
of \emph{broken circuit complex} of this matroid.

\begin{Def} Given a matroid $M$ on $n$ elements, we fix an ordering $1<2<\hdots<n$ on the elements of $M$. 
A \textbf{circuit} is a minimally dependent subset of $M$. A circuit of size one is called a \textbf{loop} and a matroid with 
no loops is called \textbf{loop-less}.   A \textbf{broken circuit} of $M$ is a circuit of $M$ with its maximal element removed. 
The \textbf{broken-circuit complex} ${\rm BCC}(M)$ is the simplicial complex on the elements $\{1,\hdots, n\}$ whose
faces are subsets $S$ that contain no broken circuit, \cite{WhiteMatroids}. This simplicial complex has dimension ${\rm rank}(M)$ and the number of its
facets is a matroid invariant, which gives the degree of $\L^{-1}$ for any linear space $\L$ with corresponding matroid $M$ \cite{PS}. 
\end{Def}

 \begin{lem}\label{lem:dimHB}
 Fix a loop-less matroid on $\{1,\hdots, n\}$ of rank $d$ with bases $\B$,
 and let $\F \subset \B$ denote the facets of the broken circuit complex.  For bases $B, B'\in \B$, 
say that $B< B'$ if $|B\cup B'| = d+1$ and $B'\backslash B$ is the maximum element of the unique circuit contained in $B\cup B'$. 
This can be extended by transitivity to a partial order on $\B$. Furthermore, $\mathcal{F}$ is the set of maximal elements of this partial order. 
 \end{lem}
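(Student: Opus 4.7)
The plan is to first verify that the given one-step relation genuinely extends to a partial order, and then to identify the maximal bases as those containing no broken circuit; the latter are exactly the facets of $\mathrm{BCC}(M)$ by a standard result.

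Concretely, $|B\cup B'|=d+1$ with $B,B'\in\B$ means $B$ and $B'$ differ by a single exchange: $B'=(B\setminus\{e\})\cup\{f\}$, and the unique circuit contained in $B\cup B'$ is the fundamental circuit $C(f,B)$ of $f$ with respect to $B$. Thus the one-step relation $B<B'$ amounts to $f=\max C(f,B)$, which in particular forces $f>e$. Every one-step increase therefore strictly raises the integer $\sum_{i\in B}i$, so the transitive closure of the relation is acyclic and yields a bona fide strict partial order on $\B$.

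Next, I would show that $B\in\B$ is maximal if and only if $B$ contains no broken circuit. If $B$ is not maximal, there is a one-step $B<B'$ with $B'=(B\setminus\{e\})\cup\{f\}$ and $f=\max C(f,B)$; then $C(f,B)\setminus\{f\}\subseteq B$ is a broken circuit contained in $B$. Conversely, if a broken circuit $C\setminus\{f\}$ with $f=\max C$ lies in $B$, then $f\notin B$ (since $C$ is dependent), so $C=C(f,B)$; by basis exchange $B':=(B\setminus\{e\})\cup\{f\}$ is a basis for any $e\in C\setminus\{f\}$, giving $B<B'$. Combined with the classical fact \cite{WhiteMatroids} that the facets of $\mathrm{BCC}(M)$ are precisely the bases of $M$ containing no broken circuit, this identifies $\F$ with the set of maximal elements of the partial order.

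The main obstacle is the second step, where one must carefully translate between broken circuits (a global notion depending on the ordering of $\{1,\ldots,n\}$) and fundamental circuits with respect to a fixed basis. Once that dictionary is in place, the rest is routine: basis exchange produces an upward step whenever a broken circuit sits inside $B$, while the monotonicity of $\sum_{i\in B}i$ rules out cycles.
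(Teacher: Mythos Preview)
Your proposal is correct and follows essentially the same approach as the paper: you use the strict increase of $\sum_{i\in B} i$ under a one-step relation to rule out cycles, and then argue both directions of ``maximal $\Leftrightarrow$ contains no broken circuit'' via the fundamental circuit $C(f,B)$. The paper's proof is the same argument with slightly less explicit use of fundamental-circuit language.
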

\begin{proof}
To show that we can extend this to a partial order on $\B$, we need to show that there are no cycles 
$B_1< B_2 < \hdots < B_r < B_1$. Suppose $|B\cup B'| = d+1$, meaning that for some $i,j\in [n]$, $B'\backslash B = \{i\}$ and 
$B\backslash B' = \{j\}$. Let  $C$ be the unique 
circuit in $B\cup B'$.  Since neither $B$ nor $B'$ contains $C$, both $i$ and $j$ are elements of $C$. In particular, if $i=\max(C)$, then $j<i$. 
Thus, if $B<B'$, then $\sum_{b\in B} b < \sum_{b'\in B'} b'$. This rules out the possibility of any cycle. 

If $B \in \B\backslash \mathcal{F}$, then $B$ contains some broken circuit $C\backslash \max(C)$. 
Since $|C|\neq 1$, the set $B\cup C$ contains some other basis $B'\neq B$. Then $B<B'$ and $B$ is not maximal. 
Conversely, if $B$ is not maximal, then there is some basis $B'$ and circuit $C\subset B\cup B'$ with $B'\backslash B = \{\max(C)\}$. 
Then $B$ contains the broken circuit $C\backslash \max(C)$ and thus is not in $\mathcal{F}$. 
\end{proof}

\begin{lem}\label{claim:dimH1}
Let $\B$ be the set of bases of a loop-less matroid, and let 
$\F \subset \B$ denote the facets of the corresponding broken circuit complex.
The linear space $\H_\B \subset \bigwedge^d\R^n$ defined in \eqref{eq:HB} has dimension at most $|\F|$. 
\end{lem}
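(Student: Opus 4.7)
The plan is to exhibit an injective linear map $\H_\B \to \R^\F$, which immediately yields $\dim \H_\B \leq |\F|$. First, I would reinterpret $\H_\B$ via exactness of the Koszul complex: since $\1 \in \R^n$ is nonzero, the image of $\bigwedge^{d-1}\R^n \to \bigwedge^d \R^n$ given by wedging with $\1$ equals the kernel of the subsequent map $\bigwedge^d \R^n \to \bigwedge^{d+1}\R^n$, so
$$\H_\B \ = \ \bigl\{\alpha \in \bigwedge\nolimits^d \R^n : \alpha \wedge \1 = 0, \ {\rm supp}(\alpha) \subseteq \B \bigr\}.$$
The condition $\alpha \wedge \1 = 0$ unpacks, by reading the coefficient of $e_S$ for each $S \in \binom{[n]}{d+1}$, into a linear relation of the form $\sum_{j \in S} \pm\, \alpha_{S\backslash\{j\}} = 0$ (with explicit signs) among the coordinates $\alpha_I$ for $I \in \binom{[n]}{d}$.

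I would then consider the coordinate projection $\pi_\F : \H_\B \to \R^\F$, $\alpha \mapsto (\alpha_B)_{B \in \F}$, and prove that it is injective. Suppose $\alpha \in \H_\B$ satisfies $\alpha_B = 0$ for every $B \in \F$. By Lemma~\ref{lem:dimHB}, $\F$ is exactly the set of maximal elements of the partial order on $\B$, and the order is well-founded because $B < B'$ forces $\sum_{b \in B} b < \sum_{b' \in B'} b'$. I would proceed by downward induction along this order: assuming $\alpha_{B'} = 0$ for every basis $B' > B$, I aim to show $\alpha_B = 0$ for $B \in \B \backslash \F$.

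Since $B$ is not a facet of the broken circuit complex, it contains some broken circuit $C \backslash \max(C)$; note $\max(C) \notin B$ since otherwise $C \subset B$ would force $B$ to be dependent. Let $S = B \cup \{\max(C)\}$, a set of size $d+1$, and apply the Plücker-type relation at $S$. I classify the terms $\alpha_{S\backslash\{j\}}$ by where $j$ lies. For $j = \max(C)$, $S \backslash \{j\} = B$, producing the target term $\pm\alpha_B$. For $j \in B \backslash C$, the set $S\backslash\{j\}$ still contains the full circuit $C$ and hence fails to be a basis, so $\alpha_{S\backslash\{j\}} = 0$ since ${\rm supp}(\alpha) \subseteq \B$. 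For $j \in C \backslash \max(C)$, write $B_j = S\backslash\{j\} = (C\backslash\{j\}) \cup (B\backslash C)$; a direct computation shows $B \cup B_j = S$, so $|B \cup B_j| = d+1$ with unique circuit $C$, and the single element of $B_j \backslash B$ is $\max(C)$, witnessing $B < B_j$ whenever $B_j$ is itself a basis (and $\alpha_{B_j} = 0$ automatically otherwise). By the inductive hypothesis each such $\alpha_{B_j}$ vanishes, so the relation collapses to $\pm\alpha_B = 0$, completing the induction.

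The main obstacle is the combinatorial bookkeeping in the third paragraph: verifying that $C$ really is the unique circuit inside $S$ (which follows from $|S| = d+1$ and $\mathrm{rank}(S) = d$, since $S \supset B$ is a basis), pinning down exactly which $S\backslash\{j\}$ fail to be bases, and confirming the comparison $B < B_j$ in the partial order of Lemma~\ref{lem:dimHB}. Once these matroid checks are carried out carefully, the downward induction runs mechanically and yields the desired injectivity of $\pi_\F$.
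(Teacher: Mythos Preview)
Your proof is correct and is essentially the dual formulation of the paper's argument. The paper works in $\H_\B^\vee$ and shows that the restricted coordinate functionals $\rb(e_{[n]\backslash B})$ for $B\in\B\setminus\F$ lie in the span of those indexed by strictly larger bases, hence $\H_\B^\vee$ is spanned by $\{\rb(e_{[n]\backslash F}):F\in\F\}$; you work in $\H_\B$ and show the coordinate projection $\pi_\F$ is injective by proving each $\alpha_B$ with $B\notin\F$ is determined by coordinates at strictly larger bases. Both arguments invoke exactly the same linear relation coming from $S=B\cup\{\max(C)\}$ and the same partial order of Lemma~\ref{lem:dimHB}; your Koszul reinterpretation of $\H_\B$ as $\{\alpha:\alpha\wedge\1=0,\ \mathrm{supp}(\alpha)\subseteq\B\}$ is the primal counterpart of the paper's observation that every $\delta\wedge\1$ lies in $\ker\rb$. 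One minor remark: in your third case the set $B_j=S\setminus\{j\}$ with $j\in C\setminus\{\max(C)\}$ is in fact always a basis, since $C$ is the unique circuit in $S$ and $j\in C$, so the ``otherwise'' clause is never invoked.
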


\begin{proof}
In fact, we will show that the dual vectorspace $\H_\B^{\vee}$ has dimension at most $|\F|$. 
As done above, we consider the restriction map $\rb:(\bigwedge^d\R^n)^\vee \rightarrow \H_\B^{\vee}$ of linear forms 
on $\bigwedge^d\R^n$ to linear forms on $\H_\B^\vee$  and identify $(\bigwedge^d\R^n)^\vee$ with $\bigwedge^{n-d}\R^n$.
That is, for $\alpha \in \H_\B$ and $\beta \in \bigwedge^{n-d}\R^n$, $\rb(\beta)(\alpha) = \alpha\wedge \beta$. 
Since $\H_\B$ is contained in ${\rm span}\{\gamma \wedge \1: \gamma\in \bigwedge^{d-1}\R^n\}$, the kernel of the 
restriction map $\rb$ contains all elements of the form $\delta \wedge \1$ where $\delta \in \bigwedge^{n-d-1}\R^n$. 
We will use this to show that for any basis $B\in \B\backslash \mathcal{F}$,
\[\rb (e_{[n]\backslash B}) \ \ \in  \ \ {\rm span}\{ \rb(e_{[n]\backslash B'}) \ : \ B < B'\} .\]
Take any basis $B\in \B\backslash \mathcal{F}$. By definition, $B$ contains some broken circuit 
$C'\backslash \max(C')$. Let $C $ denote the union $ B\cup C'  =  B\cup \max(C')$. 
Since $e_{[n]\backslash C}$ belongs to $\bigwedge^{n-d-1}\R^n$, the element $ e_{[n]\backslash C} \wedge \1$ belongs to the 
kernel of $\rb$. Noting that $e_{[n]\backslash C} \wedge e_j = 0$ for all $j\not\in C$ then gives that 
\begin{equation}\label{eq:Pi}
0  \ \ = \  \ \rb(e_{[n]\backslash C} \wedge \1)  \ = \  \sum_{i\in C } \rb(e_{[n]\backslash C} \wedge e_i ).
 \end{equation}
 Note that, up to sign, $e_{[n]\backslash C} \wedge e_i$ equals $e_{[n]\backslash I}$ where $I = C\backslash i$. Recall that 
 $\H_\B$ belongs to the span of $\{e_I : I\in \B\}$. Thus if $I$ is not a basis, then  the restriction of $e_{[n]\backslash I} $ to $\H_\B$
is identically zero.  In particular, the sum in \eqref{eq:Pi} can be taken over $\{i\in C : C\backslash i \in \B\}$. 

If $I =C \backslash i$ is a basis, then either $i = \max(C')$ and $I = B$ or  $\max(C') \in I$ and $I\backslash B = \max(C')$ is the 
maximal element of the unique circuit $C'$ contained in $I\cup B = C$. 
Thus \eqref{eq:Pi} writes $\rb (e_{[n]\backslash B})$ as a linear combination of $\{\rb(e_{[n]\backslash B'}): B'\in \B, \ B<B'\}$.  
Since the maximal elements of this partial order are exactly the facets $\mathcal{F}$ of the broken circuit complex, this implies that 
$\H_{\B}^\vee  \subseteq  {\rm span}\{ \rb(e_{[n]\backslash F})  :  F \in \F\}$. 
In particular,  $\dim(\H_{\B})=\dim(\H_{\B}^{\vee})\leq |\mathcal{F}|$.
\end{proof}

Now we are ready to prove Theorem~\ref{thm:detrep}.

\begin{proof}[Proof of Theorem~\ref{thm:detrep}]\label{proof:detrep}
Fix a $(d-1)$-dimensional real linear space $\L\subseteq \P^{n-1}(\C)$ not contained in any coordinate hyperplane with Pl\"ucker coordinates $[\alpha] = p(\L)$. The real torus points $(\R^*)^n\cap \L^{-1}$ are Zariski-dense in $\L^{-1}$. 
So consider a point $w \in (\R^*)^{n}$. 
By Lemma~\ref{lem:wedge2}, if $[w] \in \L^{-1}$, then $\diag_w(\alpha)$ belongs to the kernel of the quadratic form $\varphi_\alpha \wedge w \wedge \delta$ on $\H_\B$ for all $\delta\in\bigwedge^{n-d-1}\R^n$.
In particular, this kernel only depends on the point $w$. Since $\varphi_{\alpha}$ is non-degenerate, it follows that $\varphi_{\alpha}$ is a Liv{\v{s}}ic-type determinantal representation for some projective variety $X$ containing $\L^{-1}$. Note that $\dim X=\dim \L^{-1}$, cf. \cite[Cor. 2.4]{SV}.  Then by \cite[Cor. 2.13]{SV}, $k= \dim(\H_\B) \geq \deg(X) \geq \deg(\L^{-1})$. By the results of \cite{PS}, 
$\deg(\L^{-1})$ equals the number $|\F|$ of facets of the broken circuit complex.
Putting this together with Lemma~\ref{claim:dimH1}, shows that $k = \dim(\H_\B) = |\F|$ and $X = \L^{-1}$. 
Therefore $\varphi_\alpha$ is a symmetric Liv{\v{s}}ic-type determinantal representation of $\L^{-1}$.
As discussed after its definition, $\varphi_\alpha$ is also positive definite at $\L^{\perp}$
\end{proof}

As in Proposition~\ref{prop:livsicchow}, this gives a determinantal representation of the Chow form of $\L^{-1}$.

\begin{cor}\label{cor:detVectors}
Let $\L \subseteq \P^{n-1}(\C)$ be a real linear subspace of dimension $d-1$ not contained
 in any coordinate hyperplane. Let $[\alpha] = p(\L) \in \P(\bigwedge^d \R^n)$ be the vector of 
 Pl\"ucker coordinates of $\L$ and $\B={\rm supp}(\alpha)$. 
Then there exist vectors $v_{I}\in \R^{\deg(\L^{-1})}$ for each $I\in \B$ 
so that $\M\in \Gr(n-d,n)$ intersects $\L^{-1}$ if and only if the determinant of the matrix
\[ \sum_{I\in \B} \frac{p_I(\M^{\perp})}{p_I(\L)} v_Iv_I^T \ \ = \ \  \sum_{I\in \B} \frac{(-1)^{s(I)} \cdot p_{[n]\backslash I}(\M)}{p_I(\L)} v_Iv_I^T   \]
is zero. This determinant is the Chow form of $\L^{-1}$.
\end{cor}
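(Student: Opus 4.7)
The corollary is obtained by translating Theorem~\ref{thm:detrep} into coordinates via Proposition~\ref{prop:livsicchow}, so my plan is purely one of unpacking definitions. First I would apply Theorem~\ref{thm:detrep} to obtain the symmetric Liv{\v{s}}ic-type determinantal representation $\varphi_\alpha \in \bigwedge^d \R^n \otimes \Sym_k(\R)$ of $\L^{-1}$ with $k=\deg(\L^{-1})=\dim(\H_\B)$. Proposition~\ref{prop:livsicchow} then guarantees that, as a polynomial in the Pl\"ucker coordinates of $\beta \in \bigwedge^{n-d}\R^n$, the determinant $\det(\varphi_\alpha \wedge \beta)$ is the Chow form of $\L^{-1}$. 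In particular, for a linear space $\M \in \Gr(n-d,n)$ with Pl\"ucker vector $\beta = \sum_J p_J(\M)\, e_J$, this determinant vanishes exactly when $\M \cap \L^{-1}\neq\emptyset$.

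The main bookkeeping step is to rewrite $\varphi_\alpha \wedge \beta$ in the form asserted. Starting from \eqref{eq:phiBilinear} with $\alpha_I = p_I(\L)$, I would evaluate $e_I \wedge \beta$ using $e_I \wedge e_J = 0$ unless $J = [n]\setminus I$, in which case $e_I \wedge e_{[n]\setminus I} = (-1)^{s(I)} e_{[n]}$. Identifying $e_{[n]}$ with $1$ yields $e_I \wedge \beta = (-1)^{s(I)} p_{[n]\setminus I}(\M)$, which equals $p_I(\M^\perp)$ via the same sign convention that gives $e_I^* = (-1)^{s(I)} e_{[n]\setminus I}$. Substituting, the symmetric bilinear form on $\H_\B$ becomes
\[
\varphi_\alpha(\beta)(p,q) \;=\; \sum_{I \in \B} \frac{p_I(\M^\perp)}{p_I(\L)}\, p_I q_I,
\]
and the rank-one summand $(p,q)\mapsto p_Iq_I$ is exactly $\rb(e_I^*)\otimes \rb(e_I^*)$ since $\rb(e_I^*)(p)=e_I^*(p)=p_I$ for $p \in \H_\B$.

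To finish, I would fix any basis of $\H_\B$ and, via the corresponding dual basis identification $\H_\B^\vee \cong \R^k$, let $v_I \in \R^k$ be the coordinate vector of the linear functional $\rb(e_I^*)$. Then $\varphi_\alpha \wedge \beta$ has matrix $\sum_{I \in \B} \frac{p_I(\M^\perp)}{p_I(\L)}\, v_I v_I^T$, and replacing $p_I(\M^\perp)$ by $(-1)^{s(I)} p_{[n]\setminus I}(\M)$ gives the second stated expression. The vanishing of its determinant is the Chow form condition by Proposition~\ref{prop:livsicchow}, proving the corollary. There is no genuine obstacle here; the only subtlety to watch is that the duality pairing $\bigwedge^d\R^n \times \bigwedge^{n-d}\R^n \to \R$ and the Pl\"ucker complement relation $p_I(\M^\perp)=(-1)^{s(I)} p_{[n]\setminus I}(\M)$ must be used with the \emph{same} sign convention already fixed in the excerpt.
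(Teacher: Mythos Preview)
Your proposal is correct and follows essentially the same route as the paper: invoke Theorem~\ref{thm:detrep} and Proposition~\ref{prop:livsicchow}, then unwind \eqref{eq:phiBilinear} and the duality conventions to express $\varphi_\alpha(\beta)$ as $\sum_{I\in\B}\frac{p_I(\M^\perp)}{p_I(\L)}\,v_Iv_I^T$ where $v_I$ is the coordinate vector of $\rb(e_I^*)$. The only difference is cosmetic: the paper fixes the specific basis $\{\rb(e_F^*):F\in\F\}$ of $\H_\B^\vee$ coming from the broken circuit complex (so that $v_F$ is a unit vector for $F\in\F$), whereas you allow an arbitrary basis---but either choice yields the corollary as stated.
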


\begin{proof}
By the proof of Theorem~\ref{thm:detrep}, $\H_{\B}$ has dimension $|\F| = \deg(\L^{-1})$ and the vectors 
$\{\rb(e_{F}^*) : F\in \F\}$ form a basis for $\H_{\B}^{\vee}$. 
Thus for each $I\in \B$, we can write $\rb(e_{I}^*)$ as 
\[ \rb(e_{I}^*)   \ \ = \ \ \sum_{F\in \F} v_{I,F} \cdot  \rb(e_{F}^*) \ \ \text{ for some vectors } \ \ v_I\in \R^{|\F|}.\]
In particular for $F\in \F$, the vectors $v_F$ are unit coordinate vectors. 

Thus for any $\alpha =  \sum_{I\in \B} \alpha_I e_I \in \bigwedge^d\R^n$ and 
$\gamma = \sum_{I\in \binom{[n]}{d}} \gamma_I e_I^* \in (\bigwedge^{d}\R^n)^{\vee}$, we have that 
\begin{equation}\label{eq:matrixVarPhi}
 \varphi_{\alpha}(\gamma) \ \ = \ \ 
\sum_{I \in \B} \frac{\gamma_I}{\alpha_I}\cdot \sum_{F, F' \in \F} v_{I,F}\cdot v_{I,F'} \cdot  \rb(e_{F}^*) \otimes \rb(e_{F'}^*). 
\end{equation}
Taking the basis of $\H_{\B}$ dual to the basis $\{\rb(e^*_F):F\in \F\}$ of $\H_\B^\vee$
 represents the symmetric bilinear form 
$\varphi_{\alpha}(\gamma)$ as the $|\F|\times|\F|$ real symmetric matrix $\sum_{I\in \B} (\gamma_{I}/\alpha_I) \cdot v_Iv_I^T$.
Now let $\beta$ denote the point in $\bigwedge^{n-d}\R^n$ identified with $\gamma\in (\bigwedge^{d}\R^n)^{\vee}$.
That is $\beta = \sum_I \beta_{I} e_{[n]\backslash I}$, where $\beta_I = (-1)^{s(I)} \gamma_I$ for all $I\in \binom{[n]}{d}$. 
Then  $\varphi_\alpha(\beta) $ is represented by  the matrix
\[\sum_{I\in \B} \frac{(-1)^{s(I)}\beta_{I}}{\alpha_I} \cdot v_Iv_I^T  \ \ = \ \ \sum_{I\in \B} \frac{\gamma_I}{\alpha_I} \cdot v_Iv_I^T. \]
By Theorem~\ref{thm:detrep} and Proposition~\ref{prop:livsicchow}, this matrix is a determinantal representation of the
Chow form of $\L^{-1}$. In particular, if $[\beta] = p(\M) \in \bigwedge^{n-d}\R^n$ are the Pl\"ucker coordinates of some $\M \in \Gr(n-d,n)$,
then  $\L^{-1}$ intersects $\M$ if and only if the determinant of this matrix equals zero. 
Note that if $[\beta] = p(\M)$, then $[\gamma] = p(\M^{\perp})$, where $\M^{\perp}\in \Gr(d,n)$ is the orthogonal complement of $\M$.  
\end{proof}

\begin{example}
Consider the rank-3 matroid on $5$ elements with circuits $\{124, 135, 2345\}$ and let $\B$ denote its set of bases.
For example, this matroid is represented by the linear space
\[\L  \ \ = \ \ \text{rowspan}\begin{pmatrix} 
1 & 0 & 0 & 1& 1 \\
0 & 1 & 0 & 1& 0 \\
0 & 0 & 1 & 0& 1 \\
\end{pmatrix}.\]
The broken circuits are $\{12,13,234\}$ and
the broken circuit complex is the simplicial complex with facets $\mathcal{F} = \{145, 235, 245, 345\}$.
The other four bases are $\B\backslash \F = \{123, 125, 134, 234\}$.
As in the proof of Lemma~\ref{claim:dimH1}, every $C\in \binom{[5]}{4}$ gives a linear relation on $\{\rb(e_{I}^*) : I \in \B\}$ as in \eqref{eq:Pi}. 
For example, taking $C = \{1,2,3,4\}$ and $[5]\backslash C = \{5\}$ gives the relation
\[0\ =\  \sum_{i=1}^4\rb(e_{5}\wedge e_i) \ = \ -\sum_{i=1 }^4  \rb(e_{\{i,5\}}) \ 
=  \ -\rb(e^*_{\{123\}})-\rb(e^*_{\{134\}})+\rb(e^*_{\{234\}}),\]
since $\rb(e^*_{\{124\}})=0$. Ranging over all $C\in \binom{[5]}{4}$, we get five linear relations, any four of which are linearly independent.
Solving these linear equations we can write $\rb(e_{I}^*)$ as $\sum_{F\in \F} v_{I,F} \cdot \rb(e_{F}^*)$ for some $v_{I}\in \R^{|\F|} = \R^4$.
In this example, we have that 
\[ 
\begin{pmatrix}  \ \\ v_{I} \\ \ \end{pmatrix}_{I\in \B} \ \ = \ \
\bordermatrix{
F \ \backslash  \ I	  &145 &  235 	&  245 & 345 &  123 & 125 &  134 &  234\cr
145 & 1 & 0 & 0 & 0 & 1 & 1 & -1 & 0 \cr
235 & 0 & 1 & 0 & 0 & 1 & 0 & 0 & 1 \cr
245  & 0 & 0 & 1 & 0 & -1 & -1 & 0 & -1 \cr
345  & 0 & 0 & 0 & 1 & 0 & 0 & 1 & 1 \cr
}.
\]
Suppose $\alpha \in \bigwedge^3 \R^5$ with ${\rm supp}(\alpha) = \B$. For any $\beta\in \bigwedge^2\R^5$, the symmetric bilinear form $\varphi_{\alpha}(\beta)$ 
can be represented by the $4\times 4$ symmetric matrix from equation \eqref{eq:matrixVarPhi}: 
\[\begin{pmatrix}  
 \frac{\beta_{45}}{\alpha_{123}}+ \frac{\beta_{34}}{\alpha_{125}}+ \frac{\beta_{25}}{\alpha_{134}}+\frac{\beta_{23}}{\alpha_{145}} & \frac{\beta_{45}}{\alpha_{123}} &
   -\frac{\beta_{45}}{\alpha_{123}}-\frac{\beta_{34}}{\alpha_{125}} & -\frac{\beta_{25}}{\alpha_{134}} \medskip \\
 \frac{\beta_{45}}{\alpha_{123}} & \frac{\beta_{45}}{\alpha_{123}}-\frac{\beta_{15}}{\alpha_{234}}+\frac{\beta_{14}}{\alpha_{235}} & -\frac{\beta_{45}}{\alpha_{123}}+\frac{\beta_{15}}{\alpha_{234}} &
  - \frac{\beta_{15}}{\alpha_{234}}  \medskip\\
 -\frac{\beta_{45}}{\alpha_{123}}-\frac{\beta_{34}}{\alpha_{125}} & -\frac{\beta_{45}}{\alpha_{123}}+\frac{\beta_{15}}{\alpha_{234}} &
   \frac{\beta_{45}}{\alpha_{123}}+\frac{\beta_{34}}{\alpha_{125}}-\frac{\beta_{15}}{\alpha_{234}}-\frac{\beta_{13}}{\alpha_{245}} & \frac{\beta_{15}}{\alpha_{234}}  \medskip\\
 -\frac{\beta_{25}}{\alpha_{134}} & -\frac{\beta_{15}}{\alpha_{234}} & \frac{\beta_{15}}{\alpha_{234}} & \frac{\beta_{25}}{\alpha_{134}}-\frac{\beta_{15}}{\alpha_{234}}+\frac{\beta_{12}}{\alpha_{345}} 
  \end{pmatrix}. \]
  In particular, if $[\alpha] = p(\L)$ and $[\beta] = p(\M)$ for some $\M\in \Gr(2,5)$, 
  then the determinant of this matrix vanishes if and only if the intersection $\M \cap \L^{-1}$ is non-empty. The determinant is the Chow form of the variety $\L^{-1}$ in $\R[\beta_{ij} : 1\leq i < j \leq 5]$. \hfill $\diamond$
\end{example}

\section{Explicit formulas in the uniform case}\label{sec:uniform}
For generic linear spaces $\L\in {\rm Gr}(d,n)$, all the Pl\"ucker coordinates $p_I(\L)$ are non-zero.  
In this case, we can give explicit formulas for the vectors in our determinantal representation 
of the Chow form of $\L^{-1}$ and for the monomial expansion of this determinant. 

\begin{thm}\label{thm:uniformVec}
Let  $\L \subseteq \P^{n-1}(\C)$ be a real linear subspace of dimension $d-1$ with Pl\"ucker coordinates $[\alpha] = p(\L)$, 
and suppose that ${\rm supp}(\alpha) = \binom{[n]}{d}$. 
Then $k = \deg(\L^{-1})= \binom{n-1}{d-1}$ and we can identify $\R^k$ with ${\rm span} \{e_{K} : K\in \binom{[n-1]}{d-1}\}$. For $I\in\binom{[n]}{d}$, define the vector $v_I \in \R^k$ by 
\begin{equation}\label{eq:uniformVec}
v_{I}  \ \  = \ \  
\begin{cases}
e_{I\backslash \{n\} } &\text{if } n\in I\\
\sum_{i\in I}(-1)^{s(i, I )}  e_{I\backslash\{i\}} & \text{if } n \not\in I,
\end{cases}
\end{equation}
where $i$ is the $s(i,I)$th element of $I$.
Then $\varphi_{\alpha} =  \sum_{I\in\binom{[n]}{d}} \alpha_I^{-1} e_I  \cdot v_Iv_I^T \in \bigwedge^d\R^n \otimes \Sym_k(\R) $
is a Liv{\v{s}}ic-type determinantal representation of $\L^{-1}$. 
With respect to $\M\in \Gr(n-d,n)$, the Chow form of $\L^{-1}$ is the determinant of the $k\times k$ matrix
\begin{equation}\label{eq:uniformMatrix}
 \sum_{I\in \binom{[n]}{d}} \frac{p_I(\M^{\perp})}{p_I(\L)} v_Iv_I^T \ \ = \ \  \sum_{I\in \binom{[n]}{d}} \frac{(-1)^{s(I)} \cdot p_{[n]\backslash I}(\M)}{p_I(\L)} v_Iv_I^T.   
 \end{equation}
\end{thm}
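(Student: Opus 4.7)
The plan is to apply Corollary~\ref{cor:detVectors} in the generic case, where the matroid $M(\L)$ is uniform of rank $d$ on $[n]$ and $\B = \binom{[n]}{d}$. The task reduces to (i) identifying the facets $\F$ of the broken-circuit complex, (ii) fixing an explicit basis of $\H_\B^\vee$ indexed by $\F$, and (iii) expressing each $\rb(e_I^*)$ in this basis; the coefficients are the entries of $v_I$.

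For $U_{d,n}$ the circuits are all $(d{+}1)$-subsets of $[n]$, so the broken circuits are precisely the $d$-subsets of $[n{-}1]$. A basis $F\in\binom{[n]}{d}$ contains a broken circuit if and only if $F\subseteq [n{-}1]$, hence
\[
\F \ = \ \bigl\{\,K\cup\{n\}\ :\ K\in \tbinom{[n-1]}{d-1}\,\bigr\}.
\]
By \cite{PS} this gives $\deg(\L^{-1}) = |\F| = \binom{n-1}{d-1}$, which is the $k$ of Theorem~\ref{thm:detrep}. Then $\{\rb(e_{K\cup\{n\}}^*) : K\in\binom{[n-1]}{d-1}\}$ is a basis of $\H_\B^\vee$, and the natural identification $\R^k \cong \mathrm{span}\{e_K : K\in\binom{[n-1]}{d-1}\}$ sends this basis to $\{e_K\}$.

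Next I would compute each $v_I$. If $n\in I$, then $I$ is itself a facet and $\rb(e_I^*)$ is a basis vector, giving $v_I = e_{I\setminus\{n\}}$, matching \eqref{eq:uniformVec}. For $I\subseteq [n-1]$, the argument of Lemma~\ref{claim:dimH1} specializes to the single circuit $C = I\cup\{n\}$, producing the relation
\[
0 \ = \ \rb\bigl(e_{[n]\setminus C}\wedge \1\bigr) \ = \ \sum_{i\in C}\rb\bigl(e_{[n]\setminus C}\wedge e_i\bigr)
\]
in $\H_\B^\vee$. Each summand equals $\pm\rb(e_{C\setminus\{i\}}^*)$ via the duality $e_{[n]\setminus J}=(-1)^{s(J)}e_J^*$; the term $i=n$ contributes a nonzero multiple of $\rb(e_I^*)$, while for $i=i_j\in I$ the set $C\setminus\{i_j\} = (I\setminus\{i_j\})\cup\{n\}$ lies in $\F$, so it contributes a signed basis vector $\pm e_{I\setminus\{i_j\}}$. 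Solving for $\rb(e_I^*)$ yields $v_I = \sum_{j=1}^d \epsilon_j\, e_{I\setminus\{i_j\}}$ for some signs $\epsilon_j\in\{\pm1\}$.

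The remaining obstacle is sign bookkeeping: a direct count of the transpositions required to move $e_{i}$ into sorted position inside $e_{[n]\setminus(C\setminus\{i\})}$, together with the formula for $s(\cdot)$, gives $\epsilon_j = (-1)^{j+d}$, which agrees with the $(-1)^{j}$ appearing in \eqref{eq:uniformVec} up to the global factor $(-1)^d$. Since only $v_I v_I^T$ enters the symmetric matrix \eqref{eq:uniformMatrix}, this per-index global sign is immaterial, and the vectors of \eqref{eq:uniformVec} realize those produced by Corollary~\ref{cor:detVectors}. The Livšic-type determinantal representation $\varphi_\alpha$ then follows from Theorem~\ref{thm:detrep}, and the Chow form identity \eqref{eq:uniformMatrix} from Proposition~\ref{prop:livsicchow}.
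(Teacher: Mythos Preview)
Your proposal is correct and follows the same overall strategy as the paper: specialize Corollary~\ref{cor:detVectors} to the uniform matroid, identify $\F=\{K\cup\{n\}:K\in\binom{[n-1]}{d-1}\}$, and compute the coordinates of each $\rb(e_I^*)$ in the resulting basis of $\H_\B^\vee$. The only notable difference is in the mechanics of that last computation. You extract $v_I$ from the linear relation $\rb(e_{[n]\setminus C}\wedge\1)=0$ with $C=I\cup\{n\}$ and then chase signs; the paper instead observes that $\{e_K\wedge\1:K\in\binom{[n-1]}{d-1}\}$ is the basis of $\H_\B$ dual to $\{\rb(e_{K\cup\{n\}}^*)\}$ and reads off $v_{I,F}=\rb(e_I^*)(e_{F\setminus\{n\}}\wedge\1)$ as a single wedge product, obtaining $(-1)^{s(i,I)+d-1}$ directly. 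Both routes yield the same vectors up to a global sign on each $v_I$, which, as you note, is immaterial for $v_Iv_I^T$; the dual-basis evaluation is just a bit cleaner and makes the sign bookkeeping you flagged as ``the remaining obstacle'' automatic.
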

\begin{proof}
Consider the uniform matroid with bases $\B = \binom{[n]}{d}$ and circuits $\binom{[n]}{d+1}$. 
Its broken circuits are $\binom{[n-1]}{d}$ and the facets of the broken circuit complex 
are  $\F = \{F\in \B : n\in F\}$.
By the proof of Theorem~\ref{thm:detrep},
the vectors $\{\rb(e_{F}^*) : F\in \F\}$ form a basis for $\H_{\B}^{\vee}$. 
The dual basis of $\H_\B$ equals $\{e_K \wedge \1: K\in \binom{[n-1]}{d-1} \}$.
To see this, note that for $F\in \F$ and $K\in \binom{[n-1]}{d-1}$, 
\[ \rb(e_{F}^*)(e_K\wedge \1)  \ \ = \ \ (-1)^{s(F)} \cdot  e_K \wedge  \1  \wedge e_{[n]\backslash F}  \ \ =  \ \ \begin{cases}
1 & \text{if } K\cup\{n\} = F\\
0 & \text{otherwise.} \end{cases}
\]
In particular, for $I\in \B\backslash \F$, we can write the linear forms $\rb(e_{I}^*) = \sum_{F\in \F}  v_{I,F} \cdot  \rb(e_{F}^*) $ where 
\[
v_{I,F}  \ \ = \ \ \rb(e_{I}^*)(e_{F\backslash \{n\}}\wedge \1)  \ \ 
=  \ \ \begin{cases}
(-1)^{s(i,I)+d-1} & \text{if } I =  F \cup\{i\} \backslash \{n\} \text{ for some $i\in [n-1]$}\\
0 & \text{otherwise.} \end{cases}
\]
Multiplying $v_I$ by $(-1)^{d-1}$ does not change the matrix $v_Iv_I^T$. 
Therefore the vectors $v_I$ are exactly those described in the proof of Corollary~\ref{cor:detVectors}. \end{proof}

Since all of the coefficients matrices $v_Iv_I^T$ in the representation \eqref{eq:uniformMatrix} have rank-one, 
the monomial expansion of the determinant involves only square-free monomials. By the Cauchy--Binet Theorem, 
the coefficients of these monomials are the squares of the corresponding maximal minors of the $\binom{n-1}{d-1}\times \binom{n}{d}$ matrix $(v_I : I\in \binom{[n]}{d})$. 

The vectors $\{v_I:I\in \binom{[n]}{d} \}$ appearing in Theorem~\ref{thm:uniformVec} have special structure, 
which we can exploit to get a more explicit formula for the Chow form of $\L^{-1}$. 
Specifically, they appear in the boundary
operator of the ``complete'' simplicial complex of dimension $(d-1)$ on $n$ vertices. 
For $d =2$, this is just the complete graph on $n$ vertices.

\begin{example}\label{ex:Gr24}
Consider the uniform matroid of rank 2 on $\{1,2,3,4\}$. 
Using Theorem~\ref{thm:uniformVec}, we define vectors $v_{i4} = e_{i}$ for $i=1, 2,3$ and $v_{ij}=  e_{i}- e_{j}$ for $1\leq i< j \leq 3$.
The linear space $\H_{\binom{[4]}{2}}$ is spanned by the rows of the matrix $(v_{ij}: 1\leq i < j \leq 4)$:
\[ \begin{pmatrix}v_{14} & v_{24} & v_{34} & v_{12} & v_{13} & v_{23} \end{pmatrix} \ \  =  \ \
\begin{pmatrix} 
1 & 0 & 0 & 1& 1& 0 \\
0 & 1 & 0 & -1 & 0 & 1\\
0 & 0 & 1 & 0 & -1 & -1
\end{pmatrix}.\]
The rows of this matrix are the coordinates of $e_1\wedge \1 $, $e_2\wedge \1 $, and $e_3\wedge \1 $. 
Theorem~\ref{thm:uniformVec}   states that for a linear space $\L\in \Gr(2,4)$ with non-zero Pl\"ucker coordinates $\alpha_{ij}=p_{ij}(\L)$, 
the Chow form of $\L^{-1}$ in the variables $\gamma_{ij} =p_{ij}(\M^{\perp})$ equals the determinant 
\[ \det
\begin{pmatrix}
 \frac{\gamma_{12}}{\alpha_{12}}+\frac{\gamma_{13}}{\alpha_{13}}+\frac{\gamma_{14}}{\alpha_{14}} & -\gamma_{12}/\alpha_{12} &-\gamma_{13}/\alpha_{13} \\
 -\gamma_{12}/\alpha_{12} & \frac{\gamma_{12}}{\alpha_{12}}+\frac{\gamma_{23}}{\alpha_{23}}+\frac{\gamma_{24}}{\alpha_{24}} & -\gamma_{23}/\alpha_{23} \\
 -\gamma_{13}/\alpha_{13} & -\gamma_{23}/\alpha_{23} & \frac{\gamma_{13}}{\alpha_{13}}+\frac{\gamma_{23}}{\alpha_{23}}+\frac{\gamma_{34}}{\alpha_{34}} \\
\end{pmatrix}.
 \]
 Clearing denominators by multiplying by $\prod_{i,j}\alpha_{ij}$ give the bihomogeneous equation \smallskip \\
 \begin{tabular}{l}
$ \alpha_{23} \alpha_{24}  \alpha_{34}  \gamma_{12} \gamma_{13} \gamma_{14}  + \alpha_{13} \alpha_{24}  \alpha_{34}  \gamma_{12} \gamma_{14}  \gamma_{23} + 
 \alpha_{12} \alpha_{24}  \alpha_{34}  \gamma_{13} \gamma_{14}  \gamma_{23} + \alpha_{14}  \alpha_{23} \alpha_{34}  \gamma_{12} \gamma_{13} \gamma_{24}$\\  
 $+ \alpha_{12} \alpha_{23} \alpha_{34}  \gamma_{13} \gamma_{14}  \gamma_{24}  + \alpha_{13} \alpha_{14}  \alpha_{34}  \gamma_{12} \gamma_{23} \gamma_{24}  + 
 \alpha_{12} \alpha_{14}  \alpha_{34}  \gamma_{13} \gamma_{23} \gamma_{24}  + \alpha_{12} \alpha_{13} \alpha_{34}  \gamma_{14}  \gamma_{23} \gamma_{24} $\\ 
 $+ \alpha_{14}  \alpha_{23} \alpha_{24}  \gamma_{12} \gamma_{13} \gamma_{34}  + \alpha_{13} \alpha_{23} \alpha_{24}  \gamma_{12} \gamma_{14}  \gamma_{34}  + 
 \alpha_{13} \alpha_{14}  \alpha_{24}  \gamma_{12} \gamma_{23} \gamma_{34}  + \alpha_{12} \alpha_{14}  \alpha_{24}  \gamma_{13} \gamma_{23} \gamma_{34}$ \\ 
 $+ \alpha_{12} \alpha_{13} \alpha_{24}  \gamma_{14}  \gamma_{23} \gamma_{34}  + \alpha_{13} \alpha_{14}  \alpha_{23} \gamma_{12} \gamma_{24}  \gamma_{34}  + 
 \alpha_{12} \alpha_{14}  \alpha_{23} \gamma_{13} \gamma_{24}  \gamma_{34}  + \alpha_{12} \alpha_{13} \alpha_{23} \gamma_{14}  \gamma_{24}  \gamma_{34}$\\
\end{tabular} \smallskip \\
which vanishes on $[\alpha]=p(\L)$ and $[\gamma]=p(\M^{\perp})$ when $\L^{-1}\cap \M\neq \emptyset$.
 \hfill $\diamond$
\end{example}

In the uniform case with $d=2$, the vectors \eqref{eq:uniformVec} represent the graphic matroid of the complete graph on $n$ vertices. 
The maximal minors of the matrix  $(v_{ij}: 1\leq i < j \leq n)$ are $0, \pm1$, and are zero precisely when the corresponding 
subgraphs contain cycles. See e.g. \cite[Ch. 5]{Oxley}. The matrix in Corollary~\ref{cor:detVectors} is a weighted Laplacian of the complete graph.

\begin{cor}\label{cor:graphic}
Let $\mathcal{T}_n$ denote the set of spanning trees on $n$ vertices. 
If $\L\in \Gr(2,n)$ has no  zero Pl\"ucker coordinates, then
the Chow form of $\L^{\inv}$ in $\C[p_{ij}(\M^{\perp}): 1\leq i<j\leq n]$ is 
\begin{equation}\label{eq:GraphFormula}
  \sum_{T \in \mathcal{T}_n} \prod_{\{i,j\} \in T}  p_{ij}(\M^{\perp}) \cdot \prod_{\{k,\ell\} \not\in T}p_{k \ell}(\L).    
  \end{equation}
\end{cor}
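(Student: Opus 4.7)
The plan is to recognize the matrix in \eqref{eq:uniformMatrix} as a weighted reduced Laplacian of the complete graph $K_n$ and invoke the Matrix-Tree Theorem via Cauchy--Binet.

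First I would identify the vectors $v_{ij}\in \R^{n-1}$ from Theorem~\ref{thm:uniformVec} in the $d=2$ case: the formula \eqref{eq:uniformVec} gives $v_{in}=e_i$ for $1\le i < n$ and $v_{ij}=e_i-e_j$ for $1\le i<j<n$. These are exactly the columns of the reduced signed incidence matrix $B\in\R^{(n-1)\times \binom{n}{2}}$ of $K_n$, obtained by deleting the row corresponding to vertex $n$ (equivalently, viewing $v_{in}$ as the signed incidence vector of the edge $\{i,n\}$ after projecting away from $e_n$).

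Next, setting $w_{ij} = p_{ij}(\M^\perp)/p_{ij}(\L)$ and $W = \diag(w_{ij})$, the sum $\sum_{\{i,j\}} w_{ij}\, v_{ij}v_{ij}^T$ in \eqref{eq:uniformMatrix} is precisely $BWB^T$, the reduced Laplacian of $K_n$ with edge weights $w_{ij}$. By the Cauchy--Binet formula,
\[
\det(BWB^T) \ = \ \sum_{S\in\binom{E(K_n)}{n-1}} (\det B_S)^2 \prod_{e\in S} w_e.
\]
It is a standard fact of algebraic graph theory (the graphic matroid of $K_n$ being unimodular, cf.\ \cite[Ch.~5]{Oxley} cited in the excerpt) that $\det B_S = \pm 1$ when $S$ is a spanning tree and $\det B_S = 0$ otherwise. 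Hence
\[
\det(BWB^T) \ = \ \sum_{T\in\mathcal{T}_n} \prod_{\{i,j\}\in T} \frac{p_{ij}(\M^\perp)}{p_{ij}(\L)}.
\]

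Finally, I would clear denominators by multiplying through by $\prod_{\{i,j\}}p_{ij}(\L)$. Since this factor is independent of $\M$ and nonzero (all Plücker coordinates of $\L$ are assumed nonzero), the resulting polynomial in the $p_{ij}(\M^\perp)$ vanishes on exactly the same hypersurface as $\det(BWB^T)$, and by Theorem~\ref{thm:uniformVec} this is the Chow form of $\L^{-1}$ (defined up to scalar). After clearing denominators one obtains precisely the expression \eqref{eq:GraphFormula}, since each spanning tree $T$ contributes $\prod_{\{i,j\}\in T} p_{ij}(\M^\perp) \cdot \prod_{\{k,\ell\}\notin T} p_{k\ell}(\L)$. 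The only obstacle is bookkeeping: checking that signs wash out (which they do, because $v_Iv_I^T$ is insensitive to the sign of $v_I$) and that the rescaling is compatible with the scalar ambiguity of the Chow form.
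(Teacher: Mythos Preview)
Your proposal is correct and follows essentially the same route as the paper. The paper's argument (given in the paragraph immediately preceding the corollary) identifies the vectors $v_{ij}$ from Theorem~\ref{thm:uniformVec} with the graphic matroid of $K_n$, notes that the resulting matrix is a weighted Laplacian whose maximal minors are $0,\pm1$ and nonzero exactly on spanning trees, and then (implicitly) applies Cauchy--Binet to obtain \eqref{eq:GraphFormula}; your write-up just makes the Cauchy--Binet/Matrix-Tree step and the denominator-clearing explicit.
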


This formula for the Chow form of a reciprocal line has a nice interpretation in terms of the resultant 
of binary forms.  This comes from the observation that the reciprocal linear space of $\L \in \Gr(2,n)$ 
with non-zero Pl\"ucker coordinates is the rational normal curve of degree $n-1$ in $\P^{n-1}(\C)$.  To see this, choose a parametrization 
$[s:t] \mapsto [\ell_1(s,t): \hdots:\ell_n(s,t)]$ of $\L$ where $\ell_1, \hdots, \ell_n$ are linear forms in $\R[s,t]$. 
Then $\L^{-1}$ is the image of $\P^1$ under the rational map $[s:t] \dashmapsto [\ell_1^{-1}: \hdots:\ell_n^{-1} ]$.
Clearing denominators writes $ \L^{-1}$  as the image of $\P^1$  under 
\[[s:t] \dashmapsto \biggl[ \ \prod_{j\neq 1} \ell_j : \ \hdots \ :\prod_{j\neq n} \ell_j  \ \biggl].\]
Since the Pl\"ucker coordinates of $\L$ are non-zero, all the roots in $\P^1$ of $\ell_1, \hdots, \ell_n$  are distinct.  
Up to scaling, the polynomials $\{\prod_{j\neq i} \ell_j \}_{i\in [n]}$ are the interpolators for the roots of $\prod_j \ell_j$, 
and therefore form a basis of the space $\R[s,t]_{n-1}$ of binary forms of degree $n-1$. 
Intersection points of a hyperplane $c^{\perp} = \V(c_1x_1 + \hdots c_nx_n)$ with $\L^{-1}$ 
correspond to roots of the binary polynomial $c_1\prod_{j\neq 1} \ell_j + \hdots +c_n\prod_{j\neq n} \ell_j$.
In particular, a linear space $\M = c_1^{\perp}\cap c_2^{\perp}$ of codimension two intersects $\L^{-1}$ if and only if 
the two binary forms corresponding to $c_1$ and $c_2$ have a common root.  The polynomial in the coefficients of 
binary forms that vanishes when the two forms have a common root is called the \emph{resultant}.
Putting this all together the following interesting consequence of Corollary~\ref{cor:graphic}. 

\begin{cor}\label{cor:resultant}
Suppose $\ell_1, \hdots, \ell_n \in \R[s,t]$ are linear forms $\ell_j= a_{1j}s + a_{2j} t$ with distinct roots in $\P^1$. 
Any two binary forms in $\R[s,t]$ of degree $n-1$ can be represented as 
\[ c_1(x) = c_{11}\prod_{j\neq 1} \ell_j + \hdots +c_{1n}\prod_{j\neq n} \ell_j  
 \ \ \text{ and } \ \
 c_2(x) = c_{21}\prod_{j\neq 1} \ell_j + \hdots +c_{2n}\prod_{j\neq n} \ell_j 
\]
for some $c_{ij}\in \R$. Their resultant equals 
\begin{equation}\label{eq:GraphFormula2}
  \sum_{T \in \mathcal{T}_n} \prod_{\{i,j\} \in T}  p_{ij}(c) \cdot \prod_{\{k,l\} \not\in T}p_{k l}(a), 
  \end{equation}
  where $\mathcal{T}_n$ denotes the set of spanning trees on $n$ vertices, and $p_{ij}(a)$ and $p_{ij}(c)$ denote the $(i,j)$th minors of the $2\times n$ matrices $a = (a_{ij})_{ij}$ and $c = (c_{ij})_{ij}$, respectively. 
\end{cor}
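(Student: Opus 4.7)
The plan is to interpret the right-hand side of \eqref{eq:GraphFormula2} as the Chow form of the reciprocal linear space $\L^{-1}$, where $\L = \text{rowspan}(a) \in \Gr(2, n)$, and to identify this Chow form with the resultant of the two binary forms.

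\smallskip

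First I would set up the dictionary. Since the $\ell_j$ have distinct roots in $\P^1$, all Pl\"ucker coordinates $p_{ij}(\L) = p_{ij}(a)$ of $\L$ are nonzero, so Corollary~\ref{cor:graphic} applies. As recalled in the paragraph preceding the corollary, $\L^{-1}$ is the image of $\P^1$ under $[s:t] \mapsto [\prod_{j \neq 1}\ell_j : \cdots : \prod_{j \neq n}\ell_j]$, and a codimension-two linear space $\M := c_1^{\perp} \cap c_2^{\perp}$ meets $\L^{-1}$ precisely when the two binary forms $c_1(x), c_2(x)$ share a root in $\P^1$, i.e.\ when their resultant vanishes. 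Since $\M^{\perp} = \text{span}(c_1, c_2)$, we have $p_{ij}(\M^{\perp}) = c_{1i}c_{2j} - c_{1j}c_{2i} = p_{ij}(c)$; substituting these into the Chow-form formula \eqref{eq:GraphFormula} of Corollary~\ref{cor:graphic} produces exactly the right-hand side of \eqref{eq:GraphFormula2}.

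\smallskip

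Next I would match the resultant to this Chow form up to a scalar. Viewed as polynomials in the $2n$ coefficients $c_{ki}$, both the resultant $\text{Res}(c_1, c_2)$ and the right-hand side of \eqref{eq:GraphFormula2} are bihomogeneous of bidegree $(n - 1, n - 1)$ in $(c_1, c_2)$ and vanish exactly on the locus where $c_1(x), c_2(x)$ share a root in $\P^1$. Since $\{\prod_{j \neq i}\ell_j\}_{i = 1}^{n}$ is a basis of $\R[s, t]_{n - 1}$, the substitution sending $(c_{ki})$ to the monomial coefficients of $c_k(x)$ is a linear bijection, so the resultant remains irreducible in the $c_{ki}$; the Chow form is irreducible since $\L^{-1}$ is an irreducible rational normal curve. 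Hence the two polynomials agree up to a scalar that depends only on $a$.

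\smallskip

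Finally I would pin down this scalar by a direct comparison in a special case. Setting $c_1 = e_1$ collapses $c_1(x)$ to $\prod_{j \neq 1}\ell_j$, whose roots in $\P^1$ are $[-a_{2j} : a_{1j}]$ for $j = 2, \ldots, n$. The homogeneous Poisson formula gives $\text{Res}(c_1, c_2) = (-1)^{(n - 1)^2}\prod_{j = 2}^{n} c_2(-a_{2j}, a_{1j})$, and the evaluation $\ell_k(-a_{2j}, a_{1j}) = p_{jk}(a)$ collapses each factor to $c_2(-a_{2j}, a_{1j}) = c_{2j}\prod_{k \neq j}p_{jk}(a)$. On the right-hand side of \eqref{eq:GraphFormula2}, the factors $p_{ij}(c)$ vanish unless $1 \in \{i, j\}$, so only the star tree at vertex $1$ contributes. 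Matching these two expressions identifies the scalar and completes the proof. The main obstacle is the sign and factor bookkeeping in this last step; an alternative is to compare the coefficient of a single extremal monomial, such as $c_{11}^{n - 1}c_{2n}^{n - 1}$, on both sides and bypass the Poisson formula entirely.
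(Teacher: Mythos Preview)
Your approach is essentially the paper's: the paper gives no separate proof, and the corollary is meant to be read off directly from the paragraph preceding it together with Corollary~\ref{cor:graphic}. Your first paragraph reproduces that argument exactly --- parametrize $\L^{-1}$ as a rational normal curve via $[s:t]\mapsto[\prod_{j\neq 1}\ell_j:\cdots:\prod_{j\neq n}\ell_j]$, observe that $\M=c_1^\perp\cap c_2^\perp$ meets $\L^{-1}$ iff $c_1(x),c_2(x)$ share a root, and substitute $p_{ij}(\M^\perp)=p_{ij}(c)$ into \eqref{eq:GraphFormula}.

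Your steps 2--3 (pinning down the scalar) go beyond what the paper does, and here a word of caution is in order. The paper's working definition of ``resultant,'' stated just before the corollary, is only ``the polynomial in the coefficients of binary forms that vanishes when the two forms have a common root'' --- i.e.\ it is only determined up to a nonzero scalar. With that reading, your step~2 already finishes the proof. If instead you try to match the \emph{classically normalized} resultant (Sylvester determinant), the scalar is \emph{not} $1$: already for $n=2$ one computes $\mathrm{Res}(c_1,c_2)=-p_{12}(a)\cdot p_{12}(c)$, whereas \eqref{eq:GraphFormula2} gives $p_{12}(c)$. Your Poisson-formula line in step~3 also omits the leading-coefficient factor of $c_1$ (which is $\prod_{j\neq 1}a_{1j}$, not $1$), so that computation would not close as written. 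None of this affects the corollary under the paper's convention; just be aware that the ``equals'' in the statement is up to a nonzero factor depending only on~$a$.
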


The vectors in Theorem~\ref{thm:uniformVec} are the columns of a boundary operator on a certain 
simplicial complex.  For $d=2$, this complex is the complete graph on $n$ vertices.  The theory of graphic 
matroids led to the explicit formula Corollary~\ref{cor:graphic} for the monomial expansion of the Chow form of a reciprocal linear space. 
Obtaining explicit formulas when $d>2$ involves generalizations of graphic matroids, called \emph{simplicial matroids}, \cite{BK, CL87}. 
Following \cite{BK} and \cite{Hatcher}, we introduce some notation from algebraic topology and this rapidly developing theory.

Given a simplicial complex $G$, let $G_k$ denote the set of $k$-dimensional faces of $G$.
The $k$-incidence matrix of a simplicial complex $G$ is a matrix $\partial_k^G$ 
whose rows are indexed by the $(k-1)$-faces of $G$, whose columns are indexed by $k$-faces of $G$, 
and whose $(I,J)$-th entry is zero if $I\not\subset J$ and $(-1)^j$ when $J = \{u_0,\hdots, u_k\}$ 
with $u_0<\hdots< u_k$ and  $I = J\backslash \{u_j\}$. 
Analogous to the graphical case, a subset $F\subset G_k$ is called a \emph{forest} of $G$ if 
the columns of $\partial_k^G$ are linearly independent, and a \emph{spanning forest} if they form a basis 
for the column
space of $\partial_k^G$. A subset $R\subset G_{k-1}$
is called a \emph{root} of $G$ if the rows of $\partial_k^G$  corresponding to $G_{k-1}\backslash R$ 
form a basis its rowspace. Furthermore, a pair $(F,R)\in G_k\times G_{k-1}$ is called a \emph{rooted forest} of $G$
if $F$ is a forest in $G$ and $R$ is a root of the subcomplex induced by $F$.

The $k$th homology group is the quotient group 
$H^G_k = {\rm ker}_{\Z}(\partial_k^G) / {\rm Im}_{\Z}(\partial_{k+1}^G)$.
As described in \cite{BK}, a subset $S\subset G_k$ of size equal to the rank of $\partial_k^G$  
is a spanning forest of $G$ if and only if its top homology vanishes, \textit{i.e.} $H_k^S=0$. 
In fact, the determinant of the submatrix of $\partial_k^G$ corresponding to a rooted forest $(F,R)$ 
depends on the relative homology groups of the pair $(F,R)$.  Consider the group homomorphism 
$\Psi_{F,R}: \Z F \rightarrow \Z G_{k-1}/\Z R$ given by $x \mapsto \partial^G_k x + \Z R$. 
Then the relative homology group of $(F,R)$ is defined as 
\begin{equation}\label{eq:relHom}
 H_{k-1}(F,R) = {\rm coker}\Psi_{F,R}: = (\Z G_{k-1}/\Z R)/{\rm Im}(\Psi_{F,R}).
\end{equation}
 By \cite[Lemma 17]{BK}, if $(F,R)$ is a rooted forest of $G$, then up to sign, the determinant 
 of the corresponding submatrix of $\partial_k^G$  equals the size of the relative homology group $|H_{k-1}(F,R)|$.
Taking $k=d-1$ and $G = K^{d-1}_n$, the complete simplicial complex of dimension $d-1$ on $n$ vertices, 
gives an explicit formula for the  Chow form of a generic reciprocal linear space.

\begin{thm}\label{thm:uniform}
If all Pl\"ucker coordinates of $\L\in \Gr(d,n)$ are non-zero, then the Chow form of $\L^{-1}$ 
in $\C[p_I(\M^{\perp}) : I\in \binom{[n]}{d}]$ is 
\[
 \sum_{\substack{F  \text{ is a spanning}\\
                                   \text{forest of $K^{d-1}_n$} }} |H_{d-2}(F,R)|^2 
\cdot \prod_{I\in F} p_I(\M^{\perp}) \cdot  \prod_{I\not\in F} p_I(\L),  \]
where $R = \{I\in \binom{[n]}{d-1} : n \in I\}$ and $H_{d-2}(F,R)$ the relative homology group defined in 
\eqref{eq:relHom}. 
\end{thm}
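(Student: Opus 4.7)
The plan is to apply Cauchy--Binet to the rank-one representation in Theorem~\ref{thm:uniformVec} and then evaluate the resulting minors via the boundary operator of $K^{d-1}_n$. Arrange the vectors $v_I$ from \eqref{eq:uniformVec} as the columns of a matrix $V$ of size $\binom{n-1}{d-1}\times \binom{n}{d}$, with rows indexed by $\binom{[n-1]}{d-1}$. Since each $v_Iv_I^T$ is rank-one, Cauchy--Binet applied to \eqref{eq:uniformMatrix} yields
\[
\det\!\Biggl(\sum_{I}\frac{p_I(\M^\perp)}{p_I(\L)}\,v_Iv_I^T\Biggr)
\ = \ \sum_{\substack{F\subseteq \binom{[n]}{d}\\ |F|=\binom{n-1}{d-1}}} (\det V_F)^2 \prod_{I\in F}\frac{p_I(\M^\perp)}{p_I(\L)},
\]
where $V_F$ is the square submatrix of $V$ with columns in $F$. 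Multiplying through by $\prod_I p_I(\L)$ reshapes this into the form displayed in the theorem, so the task reduces to evaluating $(\det V_F)^2$.

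The key identification is that $V$, up to a sign in each column, is the block of the simplicial boundary matrix $\partial^{G}_{d-1}$ of $G=K^{d-1}_n$ obtained by keeping only the rows indexed by $G_{d-2}\setminus R=\binom{[n-1]}{d-1}$, where $R=\{K\in \binom{[n]}{d-1}: n\in K\}$. Indeed, when $n\in I$ the only element of $\binom{[n-1]}{d-1}$ contained in $I$ is $I\setminus\{n\}$, which matches the formula $v_I=e_{I\setminus\{n\}}$ in \eqref{eq:uniformVec} up to the boundary sign $(-1)^{d-1}$; when $n\notin I$, every boundary face $I\setminus\{i\}$ lies in $\binom{[n-1]}{d-1}$ and the coefficients $(-1)^{s(i,I)}$ coincide with the simplicial boundary signs up to a global shift arising from the $1$-indexed versus $0$-indexed convention in $s(i,I)$. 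Since $(\det V_F)^2$ is insensitive to column signs, it equals the squared determinant of the corresponding block of $\partial^{G}_{d-1}$.

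Now \cite[Lemma~17]{BK} evaluates this determinant exactly: it vanishes unless $(F,R)$ is a rooted forest of $G$, in which case its absolute value is $|H_{d-2}(F,R)|$. Because $|F|=\binom{n-1}{d-1}$ equals the rank of $\partial^{G}_{d-1}$, non-singularity of $V_F$ forces $F$ to be a spanning forest of $K^{d-1}_n$ and $R$ to be a root of the subcomplex it induces, which is precisely the indexing set of the sum in the theorem. Substituting $|\det V_F|=|H_{d-2}(F,R)|$ and clearing denominators produces the asserted formula. The one delicate bookkeeping point is the column-sign comparison between \eqref{eq:uniformVec} and the simplicial boundary convention, but this is neutralized by squaring; with that step carried out, the remainder is an almost mechanical combination of Cauchy--Binet with the Bolker--Kleinschmidt evaluation, requiring no direct enumeration of spanning forests or computation of relative homology groups.
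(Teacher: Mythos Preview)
Your proof is correct and follows essentially the same route as the paper: expand the determinant in \eqref{eq:uniformMatrix} via Cauchy--Binet, identify the matrix $V$ (up to column signs) with the block of the simplicial boundary operator $\partial^{G}_{d-1}$ obtained by deleting the rows in $R$, and invoke \cite[Lemma~17]{BK} to evaluate the surviving maximal minors as $|H_{d-2}(F,R)|$. One small correction: the reference \cite{BK} is Bernardi--Klivans, not ``Bolker--Kleinschmidt.''
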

\begin{proof}
Let $G = K^{d-1}_n$ and $R = \{I\in \binom{[n]}{d-1} : n \in I\} \subset G_{d-2}$. 
Consider the $\binom{n}{d-1}\times\binom{n}{d}$ matrix  $\partial_{d-1}^G$
and let $V$ denote the submatrix corresponding to the rows $G_{d-2}\backslash R = \binom{[n-1]}{d-1}$. 
Up to rescaling by $\pm 1$, 
the vectors $\{v_I:I\in \binom{[n]}{d} \}$ appearing in \eqref{eq:uniformVec} are the columns of the matrix $V$. 
By Corollary~\ref{cor:detVectors} and Theorem~\ref{thm:uniformVec} , 
the Chow form of $\L^{-1}$ is the determinant of the $\binom{n-1}{d-1}\times \binom{n-1}{d-1}$ linear matrix 
$ \sum_{I \in \binom{[n]}{d}} p_I(\M^{\perp})/p_I(\L)\cdot v_Iv_I^T$.  By the Cauchy-Binet Theorem, 
this determinant expands as 
\begin{equation}\label{eq:CBexpand}
\sum_{F \subset \binom{[n]}{d} : |F| = \binom{n-1}{d-1} }  \det(V_F)^2 \cdot  \prod_{I\in F} p_I(\M^{\perp})/p_I(\L), 
\end{equation}
where $V_F$ is the submatrix of $V$ corresponding to columns $F$. Note that rescaling the columns 
of $V$ by $\pm 1$ does not change $\det(V_F)^2$.  
By definition, $F$ is a spanning forest of $G$ if and only if $\det(V_F)\neq 0$, so we can restrict the 
sum to be over spanning forests $F$.  

Up to rescaling, the row of $\partial_{d-1}^G$ corresponding to the subset $J \in \binom{[n]}{d-1}$ 
equals the expansion of $e_J\wedge \1$ in the basis $\{e_I : I \in \binom{n}{d}\}$. 
In particular, the rowspan of the linear space $\H_\B$ of \eqref{eq:HB} contains the rowspan of $\partial_{d-1}^G$. 
As in the proof of Corollary~\ref{cor:detVectors}, the rows of $V$ span $\H_\B$ and thus $\partial_d^G$. 
In particular, $R  = \{I\in \binom{[n]}{d-1} : n \in I\}$ is a root of $G$. 
If $F$ is a spanning forest of $G$, then $R$ is also a root of the subcomplex induced by $F$. By \cite[Lemma 17]{BK}, 
$|\det(V_F)|$ equals $H_{d-2}(F,R)$. Plugging this into \eqref{eq:CBexpand} and multiplying by the
non-zero constant $\prod_I p_I(\L)$ gives the result.  
\end{proof}

\begin{example}
For $d>2$, the matrix $\partial_{d-1}^G$ may not be totally unimodular. 
Consider the following spanning forest of $K^2_6 = \binom{[6]}{3}$:
\[ 
F \ = \ \{\{12 3\}, \{124\}, \{136\}, \{145\}, \{156\}, \{2 35\}, \{246\}, \{2 56\}, \{3 4 5\}, \{3 4 6\}\}
\]
This simplicial complex is double-covered by the icosahedron and forms a 6-vertex triangulation of the real projective plane.  
The corresponding minor $\det(V_F)$ of the $10\times 20$ matrix $V$ is $\pm2$. 
To see this, note that any root of $F$ is a tree $T \subset F \cong\P^2(\R)$. 
The minor $|\det(V_F)|$ equals the size of the relative homology group $H_1(\P^2(\R),T)$. 
Since $T$ is contractible, we have $H_1(\P^2(\R),T) \cong H_1(\P^2(\R))\cong \Z/2\Z$.
In this example, all minors of $V$ belong to $\{0,\pm1, \pm2\}$. \end{example}


\section{The Bi-Chow form and Hadamard products of linear spaces}\label{sec:Hadamard}

In this section we define a more symmetric version of the Chow form of a reciprocal linear space and relate it 
to the Hadamard product of linear spaces, studied in \cite{Hadamard}. 

It is interesting to observe that the condition on linear spaces $\L, \M$ that $\L^{-1}\cap\M \cap(\C^*)^n$ is non-empty is symmetric 
in $\L$ and $\M$. That is, there is a point $w$ in $\L^{-1}\cap\M \cap(\C^*)^n$ if and only if there 
is a point in $\L\cap\M^{-1} \cap(\C^*)^n $, namely $w^{-1}$.  To reflect this symmetry, we consider 
this condition jointly in $(\L,\M)$.  It defines a locus of codimension-one in $\Gr(d,n)\times \Gr(n-d,n)$, on which the following polynomial vanishes. 

\begin{Def} 
We define the \textbf{Bi-Chow form} of reciprocal linear spaces to be 
the element of the coordinate ring of $\Gr(d,n)\times \Gr(n-d,n)$ given by 
\[
P(L,M) \ \ = \ \
 \sum_{\substack{F  \text{ is a spanning}\\
                                   \text{forest of $K^{d-1}_n$} }} c_F
\cdot \prod_{I\in F} p_I(M^{\perp}) \cdot  \prod_{I\not\in F} p_I(L),   \]
where $c_F\in\Z_+$ is the constant  $c_F = |H_{d-2}(F,R)|^2$ from Theorem~\ref{thm:uniform}. We also identify $p_I(M^{\perp})$ with $(-1)^{s(I)} p_{[n]\backslash I}(M)$. 
Up to multiplication by the product of the Pl\"ucker coordinates $\prod_Ip_I(L)$, this equals the determinant of the $\binom{n-1}{d-1} \times \binom{n-1}{d-1}$
matrix in \eqref{eq:uniformMatrix}. 
\end{Def}

This polynomial is bi-homogeneous in the Pl\"ucker coordinates of $L$ and $M$. 
Specializing either coordinate to a specific (generic) linear space 
$\L\in \Gr(d,n)$ or $\M\in \Gr(n-d,n)$ results in the Chow form of reciprocal linear spaces $\L^{-1}$ and $\M^{-1}$, respectively. 
From this we conclude that the Bi-Chow form has degree $\binom{n-1}{d-1}$ in the Pl\"ucker coordinates of $M$ and degree 
$\binom{n-1}{n-d-1}=\binom{n-1}{d}$ in the Pl\"ucker coordinates of $L$.  The total degree, $\binom{n}{d} = \binom{n-1}{d-1}+ \binom{n-1}{d}$, is the 
number of facets in the complete simplicial complex $K^{d-1}_n$. 
Example~\ref{ex:Gr24} gives an explicit formula for the Bi-Chow form of reciprocal linear spaces on $\Gr(2,4)\times \Gr(2,4)$.

In recent work, Bocci, Carlini, and Kileel study Hadamard products of linear spaces \cite{Hadamard}. 
The \textbf{Hadamard product} of two varieties $X,Y\subset \P^{n-1}(\C)$ is 
defined as the image of $X\times Y$ under the coordinate-wise multiplication map $(x,y)\dashmapsto [x_1y_1: \hdots: x_ny_n]$. 
When the Hadamard product of two generic linear spaces forms a hypersurface in $\P^{n-1}(\C)$, 
it closely relates to the corresponding Bi-Chow form.  
We can use this to get an equation for $\L\star\M$ for generic linear 
spaces $\L\in \Gr(d,n), \M\in \Gr(n-d,n)$.

\begin{thm}\label{thm:Hadamard}
Let $P(L, M)$ denote the Bi-Chow form on $\Gr(d,n)\times \Gr(n-d,n)$.
For generic $\L\in \Gr(d,n)$, $\M\in \Gr(n-d,n)$, 
the Hadamard product $\L\star \M$ is a hypersurface 
defined by $\prod_{i=1}^n x_i^{\binom{n-2}{d-1}} \cdot P(x^{-1}\star \L, \M)$, which is a polynomial in 
$\C[x_1, \hdots, x_n]$ of degree $\binom{n-2}{d-1}$. 
\end{thm}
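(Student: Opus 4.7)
The plan is to characterize membership in the Hadamard product via an intersection condition on $\L^{-1}$, translate it into the vanishing of the Bi-Chow form on a twisted copy of $\M$, and then combinatorially verify that the resulting rational expression is actually a polynomial of the claimed degree. First, for any $x\in (\C^{*})^n$, I would observe that $x\in \L\star \M$ if and only if there exist $y\in \L$, $z\in \M$ in $(\C^{*})^n$ with $x=y\star z$. Setting $w:=y^{-1}$, this is equivalent to $w\in \L^{-1}\cap (x^{-1}\star \M)$. For generic $\L,\M$ the locus $\L\star \M\cap (\C^{*})^{n}$ is Zariski dense in $\L\star \M$, so it suffices to describe this intersection condition. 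Since all Pl\"ucker coordinates of $\L$ are nonzero, Theorem~\ref{thm:uniform} applies, and the intersection is nonempty exactly when $P(\L,x^{-1}\star \M)=0$.

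Next, I would convert this to the form stated in the theorem. Using $(x^{-1}\star \M)^{\perp}=x\star \M^{\perp}$ one has $p_I((x^{-1}\star \M)^{\perp})=\prod_{i\in I}x_i\cdot p_I(\M^{\perp})$, while $p_I(x^{-1}\star \L)=\prod_{i\in I}x_i^{-1}\cdot p_I(\L)$. In the Bi-Chow form summed over spanning forests $F$ of $K_n^{d-1}$, the $F$-monomial of $P(\L,x^{-1}\star \M)$ carries the weight $\prod_i x_i^{\deg_F(i)}$ where $\deg_F(i):=|\{I\in F:i\in I\}|$, and the $F$-monomial of $P(x^{-1}\star \L,\M)$ carries the weight $\prod_i x_i^{-(\binom{n-1}{d-1}-\deg_F(i))}$, since every vertex lies in exactly $\binom{n-1}{d-1}$ elements of $\binom{[n]}{d}$. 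The difference of exponents is $F$-independent, yielding
\[
P(\L,x^{-1}\star \M)\ =\ \prod_{i=1}^{n} x_i^{\binom{n-1}{d-1}}\cdot P(x^{-1}\star \L,\M).
\]
Consequently $\prod_i x_i^{\binom{n-2}{d-1}}\cdot P(x^{-1}\star \L,\M)$ equals $\prod_i x_i^{-\binom{n-2}{d-2}}\cdot P(\L,x^{-1}\star \M)$.

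The principal obstacle is the polynomiality claim: that this expression is a genuine polynomial, not merely a Laurent polynomial in $x$. Monomial-by-monomial this reduces to the combinatorial inequality $\deg_F(i)\geq \binom{n-2}{d-2}$ for every spanning forest $F$ of $K_n^{d-1}$ and every vertex $i\in [n]$. I would prove it as follows: the subset $F':=\{I\in F:i\not\in I\}$ consists of $d$-faces avoiding $i$; as columns of $\partial_{d-1}^{K_n^{d-1}}$ they involve only rows indexed by $(d-1)$-subsets of $[n]\setminus\{i\}$, so their linear independence in $\partial_{d-1}^{K_n^{d-1}}$ forces their linear independence as columns of $\partial_{d-1}^{K_{n-1}^{d-1}}$, the boundary operator on the complete $(d-1)$-complex on $[n]\setminus\{i\}$, which has rank $\binom{n-2}{d-1}$. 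Hence $|F'|\leq \binom{n-2}{d-1}$ and $\deg_F(i)=|F|-|F'|\geq \binom{n-1}{d-1}-\binom{n-2}{d-1}=\binom{n-2}{d-2}$.

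Summing these exponents gives the total $x$-degree of each $F$-monomial as $d\binom{n-1}{d-1}-n\binom{n-2}{d-2}=\binom{n-2}{d-1}$, independent of $F$, so the polynomial is homogeneous of degree $\binom{n-2}{d-1}$. It vanishes on the Zariski dense open subset $\L\star \M\cap (\C^{*})^{n}$, hence on the hypersurface $\L\star \M$. Combined with the degree computation of $\L\star \M$ for generic linear spaces in complementary dimensions from \cite{Hadamard}, the polynomial must equal the defining equation of $\L\star \M$ up to a nonzero scalar.
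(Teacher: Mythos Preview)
Your proof is correct and follows essentially the same route as the paper: characterize torus points of $\L\star\M$ by the vanishing of the Bi-Chow form on a diagonally twisted pair, clear denominators, and match the degree against \cite{Hadamard}. Your detour through $P(\L,x^{-1}\star\M)$ and the identity $P(\L,x^{-1}\star\M)=\prod_i x_i^{\binom{n-1}{d-1}}P(x^{-1}\star\L,\M)$ is a trivially equivalent repackaging of the paper's computation. The one substantive addition is your argument for the lower bound $\deg_F(i)\ge\binom{n-2}{d-2}$ via the rank of $\partial_{d-1}^{K_{n-1}^{d-1}}$: the paper only exhibits the cone over $K_{n-1}^{d-2}$ to show the minimum is \emph{attained}, but does not justify the inequality itself, which is what actually guarantees polynomiality after dividing by $\prod_i x_i^{\binom{n-2}{d-2}}$. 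Your rank argument fills this gap cleanly.
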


First we check that this holds on the torus $(\C^*)^n$. 

\begin{lem}
For generic $\L\in \Gr(d,n)$, $\M\in \Gr(n-d,n)$, the Hadamard product $\L\star \M$ is a 
hypersurface in $(\C^*)^n$ defined by the Laurent polynomial $P(x^{-1}\star \L, \M)$ in $\C[x_1^{\pm 1}, \hdots, x_n^{\pm 1}]$. 
\end{lem}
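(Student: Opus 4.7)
The plan is to establish the pointwise equivalence
\[ x\in\L\star\M \ \iff \ P(x^{-1}\star\L,\M)=0 \]
on the torus $(\C^*)^n$, and then check separately that the Laurent polynomial is not identically zero, so its vanishing locus is indeed a hypersurface in $(\C^*)^n$.

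For the equivalence, the key observation is that whenever every Pl\"ucker coordinate of $L$ is nonzero, the vanishing $P(L,M)=0$ is equivalent to $L^{-1}\cap M\neq\emptyset$, since the Bi-Chow form was constructed, up to clearing the denominators $\prod_I p_I(L)$, as the determinant of the matrix in \eqref{eq:uniformMatrix} (which detects incidence of $L^{-1}$ and $M$ by Theorem~\ref{thm:uniformVec}). For generic $\L$ and $x\in(\C^*)^n$, the coordinates $p_I(x^{-1}\star\L)=\prod_{i\in I} x_i^{-1}\cdot p_I(\L)$ all remain nonzero, so this criterion applies to the pair $(x^{-1}\star\L,\M)$. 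Combined with the straightforward coordinate-wise identity $(x^{-1}\star\L)^{-1}=x\star\L^{-1}$, the condition $P(x^{-1}\star\L,\M)=0$ becomes $(x\star\L^{-1})\cap\M\neq\emptyset$. This unravels to the existence of $a\in\L\cap(\C^*)^n$ and $b\in\M$ with $x\star a^{-1}=b$, i.e., $x=a\star b$, which is exactly the condition $x\in\L\star\M$ on the torus.

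To rule out identical vanishing of the Laurent polynomial, I would use a dimension count: the parameterization $\L\times\M\to\C^n$, $(a,b)\mapsto a\star b$, has every fiber of dimension at least one because of the $\C^*$-action $(a,b)\mapsto(ta,t^{-1}b)$, so its image has dimension at most $d+(n-d)-1=n-1$. Hence $\L\star\M\cap(\C^*)^n$ is a proper subset of the torus, and by the equivalence above, $P(x^{-1}\star\L,\M)$ cannot be identically zero as a Laurent polynomial. Its vanishing locus in $(\C^*)^n$ is therefore a hypersurface, which by the equivalence coincides with $\L\star\M\cap(\C^*)^n$.

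The main obstacle is arranging the two chains of equivalences cleanly while tracking which Pl\"ucker coordinates must be nonzero for the Bi-Chow criterion to apply; beyond that the argument is essentially formal, as the identity $(x^{-1}\star\L)^{-1}=x\star\L^{-1}$ is immediate from coordinate-wise manipulation and the fiber dimension count is standard.
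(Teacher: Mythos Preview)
Your proof is correct and follows essentially the same route as the paper: both establish the pointwise equivalence on the torus by reducing $x\in\L\star\M$ to the nonemptiness of $(x^{-1}\star\L)^{-1}\cap\M$ and invoking the Chow-form interpretation of $P$. The paper phrases the pivot via the all-ones vector (showing $\1\in L\star M\iff L^{-1}\cap M\cap(\C^*)^n\neq\emptyset$ and then translating by $x^{-1}$), which is just a relabeling of your identity $(x^{-1}\star\L)^{-1}=x\star\L^{-1}$; your added dimension count to rule out identical vanishing is a detail the paper leaves implicit.
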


\begin{proof}
We claim that $\L^{-1}\cap\M\cap(\C^*)^n$ is nonempty if and only if the all-ones vector $\1$ belongs to $\L\star \M$.
Indeed, if $y\in \L^{-1}\cap\M\cap(\C^*)^n$, then $y^{-1}\in \L$ and $y \in \M$. 
This gives that  $\1 = y^{-1}\star y\in \L\star \M$. 
Similarly if $\1\in \L\star \M$, then for some $y\in \M$, $y^{-1}$ belongs to $\L$. 
A point $x\in (\C^*)^n$ belongs to $\L\star \M$ if and only if $\1$ belongs to $x^{-1}\star \L \star \M$. 
This happens if and only if the Laurent polynomial $P( x^{-1}\star \L, \M)$ vanishes. \end{proof}

By clearing denominators in $x$ carefully, we find the polynomial defining $\L\star \M$.  

\begin{proof}[Proof of Theorem~\ref{thm:Hadamard}]
Theorem~\ref{thm:uniform} gives a formula for $P(\L,\M)$ when $\L$ and $\M$ are generic.  
Note that $p_I(x^{-1}\star \L)$ equals $\prod_{i\in I} x_i^{-1}p_I(\L)$. 
Since any spanning forest $F$ of $K^{d-1}_{n}$ has size $\binom{n-1}{d-1}$, we may clear denominators 
by multiplying by that power of all the variables, giving
\begin{equation}\label{eq:Had1}
\prod_{i=1}^n x_i^{\binom{n-1}{d-1}} \cdot P(x^{-1}\star \L, \M)  \ \ = \ \ 
\sum_{\substack{F  \text{ is a spanning}\\
                                   \text{forest of $K^{d-1}_n$} }} c_F 
\cdot \prod_{i\in [n]} x_i^{\deg_F(i)}\cdot  \prod_{I\in F} p_I(\M^{\perp}) \cdot  \prod_{I\not\in F} p_I(\L),
\end{equation}
where the degree $\deg_F(i)$ is the number of maximal faces in $F$ containing the vertex $i$.
Multiplying by such a high power of $\prod_{i=1}^n x_i$ was not necessary to clear denominators.  
For a fixed $i\in [n]$, the minimum of $\deg_F(i)$ over all spanning forests $F$ equals $\binom{n-2}{d-2}$. 
This minimum is achieved by taking $F$ to be a cone over the complete simplicial complex $K_{n-1}^{d-2}$. 
Factoring out this power of $\prod_ix_i$ gives a polynomial with no monomial factors, namely the product of 
$P(x^{-1}\star \L, \M)$ with $\prod_i x_i$ raised to the power $ \binom{n-1}{d-2} = \binom{n-1}{d-1} - \binom{n-2}{d-2}$. 
Since each spanning forrest of $K^{d-1}_n$ has $\binom{n-1}{d-1}$ facets, each containing $d$ vertices, 
the polynomial in \eqref{eq:Had1} has degree $d \binom{n-1}{d-1}$.  Dividing it by $(\prod_i x_i)^{\binom{n-2}{d-2}}$
results in a polynomial of degree $d \binom{n-1}{d-1} - n\binom{n-2}{d-2} = \binom{n-2}{d-1}$, which matches the degree 
for $\L\star \M$ obtained in \cite[Theorem~6.8]{Hadamard}.  
\end{proof}

\begin{example} The Bi-Chow form on $(\L, \M)\in \Gr(2,4)\times \Gr(2,4)$ is given in Example~\ref{ex:Gr24} in coordinates $\alpha_{ij}=p_{ij}(\L)$ and $\gamma_{ij} =p_{ij}(\M^{\perp})$.
Replacing $ \alpha_{ij}=p_{ij}(\L)$ with $ x_i^{-1}x_j^{-1} \alpha_{ij}=p_{ij}(x^{-1}\star\L)$ and clearing denominators gives the polynomial defining $\L\star \M$: 
\begin{center} \begin{tabular}{l}
$ \alpha_{23} \alpha_{24}  \alpha_{34}  \gamma_{12} \gamma_{13} \gamma_{14}\cdot x_1^2  + \alpha_{13} \alpha_{24}  \alpha_{34}  \gamma_{12} \gamma_{14}  \gamma_{23}\cdot x_1x_2 + 
 \alpha_{12} \alpha_{24}  \alpha_{34}  \gamma_{13} \gamma_{14}  \gamma_{23}\cdot x_1x_3 $\\ 
 $+ \alpha_{14}  \alpha_{23} \alpha_{34}  \gamma_{12} \gamma_{13} \gamma_{24} \cdot x_1x_2 + \alpha_{12} \alpha_{23} \alpha_{34}  \gamma_{13} \gamma_{14}  \gamma_{24}\cdot x_1x_4  
 + \alpha_{13} \alpha_{14}  \alpha_{34}  \gamma_{12} \gamma_{23} \gamma_{24}\cdot  x_2^2$\\ 
 $+ \alpha_{12} \alpha_{14}  \alpha_{34}  \gamma_{13} \gamma_{23} \gamma_{24}\cdot x_2 x_3  + \alpha_{12} \alpha_{13} \alpha_{34}  \gamma_{14}  \gamma_{23} \gamma_{24} \cdot x_2x_4
 + \alpha_{14}  \alpha_{23} \alpha_{24}  \gamma_{12} \gamma_{13} \gamma_{34} \cdot x_1x_3$\\  
 $+ \alpha_{13} \alpha_{23} \alpha_{24}  \gamma_{12} \gamma_{14}  \gamma_{34} \cdot x_1x_4  + 
 \alpha_{13} \alpha_{14}  \alpha_{24}  \gamma_{12} \gamma_{23} \gamma_{34}\cdot x_2x_3  + \alpha_{12} \alpha_{14}  \alpha_{24}  \gamma_{13} \gamma_{23} \gamma_{34}\cdot x_3^2$ \\ 
 $+ \alpha_{12} \alpha_{13} \alpha_{24}  \gamma_{14}  \gamma_{23} \gamma_{34}\cdot x_3x_4  + \alpha_{13} \alpha_{14}  \alpha_{23} \gamma_{12} \gamma_{24}  \gamma_{34} \cdot x_2x_4 + 
 \alpha_{12} \alpha_{14}  \alpha_{23} \gamma_{13} \gamma_{24}  \gamma_{34}\cdot  x_3x_4$\\ $+ \alpha_{12} \alpha_{13} \alpha_{23} \gamma_{14}  \gamma_{24}  \gamma_{34}\cdot x_4^2$.\\
\end{tabular}\end{center}
This quadratic was computed in different coordinates in \cite[Example~6.11]{Hadamard}. Every term corresponds to a spanning tree on the complete graph $K_4$, whose edges appear as 
the indices of $\gamma$.  The degree of $x_i$ in this term is one less than the degree of vertex $i$ in the tree.  
 \hfill $\diamond$
\end{example}


\section{Ulrich modules and the entropic discriminant}\label{sec:Ulrich}

Families of polynomial equations all of whose solutions are real often have 
nonnegative discriminants. Given the very special structure of polynomials appearing as discriminants, 
it is natural to ask if these nonnegative discriminants can be written as a sum of squares. 
A beautiful and classical example is the discriminant of the eigenvalues of a symmetric matrix, 
which was shown to be a sum of squares in the entries of the matrix \cite{Bor1846, Ily92, Lax98, New72}. 

Let $X\subset \P^{n-1}(\C)$ be a real variety  of dimension $d-1$ and degree $k$. 
For any linear space $L\in \Gr(n-d,n)$ that does not intersect $X$, we can consider the projection $X\rightarrow \P^{d-1}(\C)$ 
given by projection with center $L$. 
Outside of the branch locus of this map, every point $y$ in $\P^{d-1}(\C)$ has $k$ preimages. 
The discriminant of this projection is a polynomial in $\R[y_1, \hdots, y_d]$ defining this branch locus in the case when it is of pure codimension one.
If the discriminant is square free and nonnegative, the set of its real zeros has codimension at least two. The complement of the real branch locus will then be connected (in the Euclidean topology on $\P^{d-1}(\R)$) and every point in the complement will have the same number of real preimages. The latter is the case when $X$ hyperbolic with respect to $L$. 

For $X = \L^{-1}$, this discriminant is called the \emph{entropic discriminant}, studied by \cite{SSV13}.
Specifically, for a linear space $\L\in \Gr(d,n)$, we can write $\L$ as the rowspan of a $d\times n$ matrix $A$. 
Then the map $[x] \mapsto [Ax]$ defines the projection $\P^{n-1}\dashrightarrow \P^{d-1}$ with center $\L^{\perp}$. 
The entropic discriminant is a polynomial in $\R[y_1, \hdots, y_d]$ vanishing on points $[y]\in \P^{d-1}(\C)$
for which the intersection of $\L^{-1}$ with $\{x: Ax =y\}$ is singular. In \cite{SSV13}, the authors show that this polynomial 
is nonnegative and that its real variety has codimension two in $\P^{d-1}(\R)$.  In Corollary~\ref{cor:SOS}, we prove their conjecture 
that the entropic discriminant is a sum of squares.  

The proof uses the equivalence, recently developed in \cite{KuSh}, between Liv{\v{s}}ic-type determinantal representations
and certain types of \emph{Ulrich modules}. 
An \textbf{Ulrich module} of a polynomial ring $S = \R[x_1,\ldots,x_n]$ is a 
finitely generated, graded Cohen--Macaulay module of $S$ that is generated in degree zero and
whose minimal number of generators equals its multiplicity, \cite{Ulrich1, Ulrich2}. See \cite{ESW03} for connections with resultants and Chow forms. 

For the rest of the section we fix a real variety $X\subset \P^{n-1}(\C)$ of dimension $d-1$ and degree $k$, with a 
Liv{\v{s}}ic-type determinantal representation $\varphi \in \bigwedge^d\R^n \otimes \Sym_k(\R)$
that is definite at some $L\in \Gr(n-d,n)$. 
Let  $S=\R[x_1,\ldots,x_n]$ and let 
$R=\R[y_1,\ldots,y_d]  \hookrightarrow \R[X]$  be the graded Noether normalization 
corresponding to the projection $X \dashrightarrow \P^{d-1}$ with center $L$.

\begin{thm*}[Theorem 4.5 \cite{KuSh}]
Under the conditions above, there is an Ulrich module $M$ over $S$ that is isomorphic to $R^k$ as $R$-module, and 
whose annihilator is  $\mathcal{I}(X)$. Furthermore, there exists an isomorphism of $S$-modules $\psi: M \to \Hom_R(M,R)$ 
such that the induced $R$-bilinear form on $M \cong_R R^k$ is symmetric and admits an orthonormal basis.
\end{thm*}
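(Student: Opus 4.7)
The plan is to build $M$ as a cokernel arising from $\varphi$, use the projection with center $L$ to realize $M$ as a free $R$-module of rank $k$, and then transport the symmetric structure of $\varphi$ to produce the pairing $\psi$ with an orthonormal basis. First, I would view $\varphi$ as inducing an $S$-linear map
\[ \Phi\colon S(-1)^k \longrightarrow \bigl(\bigwedge\nolimits^{d+1}\R^n \otimes_\R S\bigr)^k,\quad u \longmapsto (\varphi\wedge x)\cdot u,\]
where $x=\sum_i x_i e_i$ is the tautological element. Let $M$ be the graded $S$-module cokernel of $\Phi$, twisted so that it is generated in degree zero. By the Liv\v{s}ic condition, $\varphi\wedge p$ has full column rank exactly when $p\notin X$, so the support of $M$ equals $X$ and $\Ann_S(M)\supseteq\mathcal{I}(X)$.

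Second, I would restrict $M$ to an $R$-module via $R\hookrightarrow S$. Since $L\cap X=\emptyset$, the projection $X\to\P^{d-1}$ with center $L$ is finite of degree $k$, so $M$ is finitely generated over $R$. Definiteness of $\varphi$ at $L$ means the matrix $\varphi\wedge\beta_L$ (for $\beta_L$ a Pl\"ucker coordinate vector of $L$) is positive definite, and specializing over the fiber above $L$ gives $\dim_\R M_0=k$. The cokernel construction makes $M$ Cohen--Macaulay over $R$ (its support has the expected codimension), so the graded Nakayama lemma forces $M\cong R^k$ as graded $R$-modules. This gives the Ulrich property: the multiplicity of $M$ over $S$ equals $k$, which matches the minimal number of generators $\dim_\R M_0=k$. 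It also yields $\Ann_S(M)=\mathcal{I}(X)$, since an $R$-free $S$-module of $R$-rank $\deg X$ is a faithful $S/\mathcal{I}(X)$-module.

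Third, the $\Sym_k(\R)$ factor of $\varphi$ transports to an $R$-bilinear symmetric pairing $M\otimes_R M\to R$, equivalently an isomorphism $\psi\colon M\to\Hom_R(M,R)$. Under the identification $M\cong R^k$ from the previous step, the pairing has Gram matrix $B(y)\in\Sym_k(R)$ that specializes at $y=0$ (the fiber above $L$) to the positive definite matrix $\varphi\wedge\beta_L$. To exhibit an orthonormal basis I would factor $B=C^{\top} C$ with $C\in\mathrm{GL}_k(R)$: starting from a Cholesky factorization of $B(0)$ over $\R$ and lifting it degree by degree using the grading of $R$, the positive definiteness of $B(0)$ ensures that the recursive equations for the next graded piece of $C$ are uniquely solvable. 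The columns of $C^{-1}$ then form the desired orthonormal basis of $M$.

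The main obstacle is this last step. Na\"ive Gram--Schmidt applied to $B(y)$ introduces denominators that are leading principal minors, which are positive at $y=0$ but not units in $R$. The fact that no denominators are genuinely needed is a real consequence of the Ulrich property together with the positive definiteness of $\varphi$ at $L$, and the graded Cholesky lifting is the cleanest way I see to realize this concretely.
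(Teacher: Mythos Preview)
The paper does not prove this statement; it is quoted from \cite{KuSh} as a black box, and the subsequent explicit construction in Section~\ref{sec:Ulrich} merely \emph{applies} it, referring to the proofs of \cite[Theorems~3.3, 4.5]{KuSh} for the mechanism. So there is no in-paper proof to compare against, but the explicit construction does reveal where your proposal goes wrong.

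Your first two steps are broadly on the right track, though the Cohen--Macaulay justification in the second is thin. The genuine gap is the third step. A graded Cholesky lifting of $B(y)$ produces in general only a formal power series, not a polynomial: at each degree the Lyapunov-type equation $C_0^{T}C_m + C_m^{T}C_0 = (\text{known symmetric})$ is solvable because $C_0$ is invertible, but nothing forces the recursion to terminate. For instance $B(y)=\diag(1+y^2,\,1)$ has $B(0)=I$ positive definite, yet $\det B = 1+y^2$ is not a square in $\R[y]$, so no $C\in\mathrm{GL}_2(\R[y])$ satisfies $B=C^{T}C$. Positive definiteness at a point therefore cannot by itself deliver an orthonormal $R$-basis; you must exploit how $\varphi$ simultaneously produces the module \emph{and} the pairing. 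The mechanism visible in the paper's explicit construction (following \cite{KuSh}) avoids any a~posteriori diagonalization: one first changes coordinates on $\R^k$ so that $\varphi\wedge v_{d+1}\wedge\cdots\wedge v_n$ becomes the identity matrix, and in these coordinates the standard generators of the cokernel already form an orthonormal basis for the induced pairing, with the multiplication operators $A_j$ automatically symmetric. The orthonormal basis is thus built into the presentation once $\varphi$ is normalized, rather than extracted afterward.
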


This has the following concrete consequence. Since $M \cong R^k$ is an $S$-module, multiplication with any element of $S$, in particular 
$x_1, \hdots, x_n$, defines an $R$-linear map $R^k \rightarrow R^k$, and can therefore be represented by 
$k\times k$ matrices with entries in $R$. Since $S$ is commutative, these matrices commute.  
The theorem above implies that the maps $R^k\rightarrow R^k$ induced by the variables $x_1, \ldots, x_n$ can be 
written as commuting \textit{symmetric} matrices $A_1, \ldots, A_n$ whose entries are linear forms in the variables $y_1, \ldots, y_d \in R$.
If the variety $X$ is arithmetically Cohen--Macaulay (ACM), we can use these matrices to write the discriminant of the projection from $L$ as a sum of squares.

\begin{thm} \label{thm:SOS}
With the notation and the conditions above, if $\R[X]$ is a free $\R[y_1, \hdots, y_d]$-module, then the discriminant of the projection $X\rightarrow \P^{d-1}$ is a sum of squares in $\R[y_1, \hdots, y_d]$. 
\end{thm}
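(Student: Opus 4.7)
The plan is to leverage the commuting symmetric matrices furnished by the Ulrich module of \cite{KuSh} to write the trace form of the projection as a Gram matrix under the Frobenius inner product, and then invoke Cauchy--Binet to exhibit a sum of squares.

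First, I would apply the cited theorem to obtain the Ulrich $S$-module $M$ together with an $S$-linear isomorphism $\psi\colon M \to \Hom_R(M,R)$; this yields a symmetric $R$-bilinear form $\langle -,- \rangle$ on $M \cong_R R^k$ that admits an orthonormal basis $e_1,\ldots,e_k$. Since $\psi$ is $S$-linear, we have $\langle xm,n\rangle = \psi(xm)(n) = (x\cdot \psi(m))(n) = \psi(m)(xn) = \langle m, xn\rangle$ for every $x\in S$, so multiplication by $x$ on $M$ is self-adjoint and is represented in this basis by a symmetric matrix $A(x) \in \Sym_k(R)$.

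Second, I would relate these matrices to the trace form on $\R[X]$. The freeness hypothesis forces the discriminant of the projection to equal, up to the square of a unit in $R^{\times} = \R^{\times}$, the determinant of the matrix $T$ with $T_{ij} = \tr_{\R[X]/R}(b_ib_j)$ for any chosen $R$-basis $b_1,\ldots,b_k$ of $\R[X]$. Comparing $M$ and $\R[X]$ after base change to $K = \mathrm{Frac}(R)$, both become $k$-dimensional $K$-vector spaces, so $M \otimes_R K$ is free of rank one over the $K$-algebra $\R[X] \otimes_R K$. Consequently, multiplication by any $a \in \R[X]$ has identical $R$-traces on $M$ and on $\R[X]$, which gives $T_{ij} = \tr(A(b_i)A(b_j))$.

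Finally, because each $A(b_j)$ is symmetric, $\tr(A(b_i)A(b_j)) = \sum_{p,q} A(b_i)_{pq} A(b_j)_{pq}$ is the Frobenius inner product of $A(b_i)$ and $A(b_j)$, so $T = B^{\top} B$, where $B$ is the $k^2 \times k$ matrix over $R$ whose $j$-th column is the vectorization of $A(b_j)$. Cauchy--Binet then yields
\[
\det(T) \ = \ \sum_{I \in \binom{[k^2]}{k}} \det(B_I)^2,
\]
a sum of squares of polynomials in $R = \R[y_1,\ldots,y_d]$. The main obstacle I anticipate is the trace-comparison step, which requires that $\R[X] \otimes_R K$ be a product of fields so that $M \otimes_R K$ is free of rank one over it; this should follow under the standing hypotheses, perhaps after noting that $X$ is reduced. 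One also needs to track that $\det T$ represents the projection discriminant only up to the square of a nonzero real constant, but this is harmless for the sum-of-squares conclusion.
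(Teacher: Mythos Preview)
Your proposal is correct and follows essentially the same route as the paper: obtain commuting symmetric matrices from the Ulrich module of \cite{KuSh}, identify the trace form of $\R[X]/R$ with the Gram matrix of these matrices under the Frobenius inner product, and conclude via Cauchy--Binet. The paper proceeds identically, writing $H=(\tr(B_iB_j))_{ij}=UU^T$ with $U$ the matrix of vectorizations, and invoking Cauchy--Binet.

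The only noteworthy difference is the trace-comparison step. The paper asserts $\tr_{\R[X]/R}(f)=\tr(f(A_1,\ldots,A_n))$ on the grounds that the minimal polynomial of $f$ over $R$ coincides with that of the matrix $f(A_1,\ldots,A_n)$; this is terse, since minimal polynomials alone do not determine traces. Your argument---base-changing to $K=\mathrm{Frac}(R)$ and showing $M\otimes_R K$ is free of rank one over $\R[X]\otimes_R K$---is a cleaner justification. Your worry that this requires $\R[X]\otimes_R K$ to be a product of fields is easily dispatched: $X$ is reduced, so $\R[X]\otimes_R K$ is reduced and finite over the field $K$, hence a product of fields $\prod K_i$; writing $M\otimes_R K=\prod M_i$ with $M_i$ a $K_i$-space, faithfulness (from $\Ann(M)=\mathcal{I}(X)$) forces each $\dim_{K_i}M_i\geq 1$, and comparing $K$-dimensions $\sum (\dim_{K_i}M_i)[K_i:K]=k=\sum [K_i:K]$ forces equality.
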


\begin{proof}
Since the annihilator of $M$ is $\mathcal{I}(X)$, we have for all $f \in S$ that $f(A_1, \ldots, A_n)=0$ if and only if
 $f \in \mathcal{I}(X)$. Therefore, it makes sense to evaluate an element from $\R[X]$ at $(A_1, \ldots, A_n)$.
Since the coordinate ring $\R[X]$ is a free $R$-module, we can define the trace map 
$\tr: \R[X] \to R$ which sends an element $f \in \R[X]$
 to the trace of a representing matrix of the $R$-linear map $g \mapsto f \cdot g$.
Note that, for $f \in \R[X]$, we have  \[\tr(f) \ \  = \ \ \tr(f(A_1,\ldots,A_n)) \ \ \in  \ \ \R[y_1, \hdots, y_d], \]
 where the trace of $f(A_1,\ldots,A_n)$ is the usual trace of a matrix.
 This is because the minimal polynomial of $f$ over $R$ is the same as the minimal polynomial of the matrix $f(A_1,\ldots,A_n)$.
 
 Let $f_1, \ldots, f_k \in \R[X]$ be an $R$-basis of $\R[X]$ and let $B_i=f_i(A_1, \ldots, A_n)$. 
 Consider the following $k \times k$ matrix with polynomials from $R$ as entries:
 \begin{equation}\label{eq:traceForm}
 H=(\tr(f_i \cdot f_j) )_{1 \leq i,j \leq k}=(\tr(B_i \cdot B_j) )_{1 \leq i,j \leq k}.
 \end{equation}
 The determinant of $H$ is the discriminant of the projection $X\rightarrow \P^{d-1}$. This follows from the fact that 
 an $\R$-algebra, which is finite dimensional as vector space over $\R$, is reduced  if and only if 
 the trace bilinear form
 on it is nondegenerate, cf., e.g., \cite[Prop. 6.6]{AK70}. The zero set of the discriminant is by definition the set of points whose fiber is not reduced.
 
 Finally, consider the map $B\mapsto \vecz(B)$ that takes a $k\times k$ matrix to the length-$k^2$ vector of its entries. 
For any symmetric matrices $B$ and $C$, the trace $\tr(B\cdot C) = \tr(B\cdot C^T)$ equals
the dot product $\vecz(B)\cdot \vecz(C)$.  Therefore the matrix $H$ can be written as $UU^T$ where $U$ is the $k\times k^2$ matrix
whose rows consist of the vectors $\vecz(B_i)$ for $i=1, \hdots, k$. 
 Since the discriminant is the determinant of $H$, the claim follows from the Theorem of Cauchy--Binet.
\end{proof}

In the case of hyperbolic hypersurfaces, connections between positive definite trace forms and determinantal representations 
were studied in \cite{NPT13}.
Using this theory, we can write the entropic discriminant of \cite{SSV13} as a sum of squares. 

\begin{cor}\label{cor:SOS}
The entropic discriminant is a sum of squares.
\end{cor}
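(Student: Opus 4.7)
The plan is to invoke Theorem~\ref{thm:SOS} for the variety $X=\L^{-1}\subset \P^{n-1}(\C)$, with projection center $L=\L^{\perp}$. The definite Liv\v{s}ic-type determinantal representation required as input is already produced by Theorem~\ref{thm:detrep}: for $[\alpha]=p(\L)$ with $\B=\mathrm{supp}(\alpha)$, the map $\varphi_\alpha\in\bigwedge^d\R^n\otimes\Sym_k(\R)$ is a Liv\v{s}ic-type determinantal representation of $\L^{-1}$ that is positive definite at $\L^{\perp}$. Writing $\L$ as the rowspan of a matrix $A$, the map $[x]\mapsto [Ax]$ realizes the projection $\L^{-1}\dashrightarrow \P^{d-1}$ with center $\L^{\perp}$, and by definition the branch locus of this projection is cut out by the entropic discriminant of \cite{SSV13}. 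Thus the discriminant in Theorem~\ref{thm:SOS} coincides with the entropic discriminant.

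The one hypothesis of Theorem~\ref{thm:SOS} that still needs to be verified, and which I expect to be the main point, is that $\R[\L^{-1}]$ is a free module over $R=\R[y_1,\ldots,y_d]$ under the Noether normalization induced by $A$. Equivalently, $\L^{-1}$ must be arithmetically Cohen--Macaulay. This is exactly what was established by Proudfoot and Speyer in \cite{PS}: they constructed a flat degeneration of the homogeneous coordinate ring $\C[\L^{-1}]$ to the Stanley--Reisner ring of the broken circuit complex $\mathrm{BCC}(M(\L))$. The broken circuit complex of a matroid is shellable, hence Cohen--Macaulay, so its Stanley--Reisner ring is Cohen--Macaulay; since Cohen--Macaulayness is preserved under flat degeneration, the ring $\R[\L^{-1}]$ is Cohen--Macaulay as well. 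Combined with the Noether normalization coming from the linear forms in the rows of $A$, this yields the freeness of $\R[\L^{-1}]$ as an $R$-module.

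With both hypotheses in place, Theorem~\ref{thm:SOS} applies directly: the discriminant of the projection from $\L^{\perp}$ admits a presentation as $\det(H)=\det(UU^T)$ for the trace-form matrix $H$ in \eqref{eq:traceForm}, and the Cauchy--Binet expansion exhibits it as a sum of squares of polynomials in $\R[y_1,\ldots,y_d]$. Specializing this statement to $X=\L^{-1}$ yields the desired sum-of-squares decomposition of the entropic discriminant, settling the conjecture of \cite{SSV13}. The only nontrivial ingredient beyond Theorem~\ref{thm:SOS} itself is the arithmetic Cohen--Macaulayness of $\L^{-1}$, and as noted above this is already available in the literature; everything else is a direct substitution into the framework built up in Sections~\ref{sec:detreps} and \ref{sec:Ulrich}.
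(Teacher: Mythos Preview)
Your proposal is correct and matches the paper's proof: invoke Theorem~\ref{thm:detrep} for the definite Liv\v{s}ic-type representation, cite Proudfoot--Speyer \cite{PS} for the freeness of $\R[\L^{-1}]$ over $R$, and apply Theorem~\ref{thm:SOS}. The only difference is that you spell out why \cite{PS} gives Cohen--Macaulayness (shellability of the broken circuit complex plus flat degeneration), whereas the paper simply cites \cite[Prop.~7]{PS} directly.
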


\begin{proof}
Let $\L \subseteq \C^n$ always be a linear subspace of dimension $d$ not contained in any coordinate hyperplane.
By Theorem~\ref{thm:detrep}, $\L^{-1}$ has a symmetric and definite Liv{\v{s}}ic-type determinantal representation, 
whose size equals the degree of $\L^{-1}$.  
 By \cite[Prop. 7]{PS} the coordinate ring $\R[\L^{-1}]$ is a free $R$-module.  The result then follows from Theorem~\ref{thm:SOS}. 
\end{proof}

In the following, we explain how to explicitly construct this sum of squares representation for generic $\L\in \Gr(d,n)$. 
Let $v_1, \hdots, v_d \in \R^n$ be a basis for $\L$ and $v_{d+1}, \hdots, v_n\in \R^n$ be a basis for $\L^{\perp}$. 
To use the construction described in \cite{KuSh}, we need to do a change of basis on $\R^n$ to write the projection from 
$\L^{\perp}$ as a coordinate projection.  Specifically, for each $i=1, \hdots, n$ we take $y_i = \sum_j v_{ij} x_j$. 
This identifies $y_1, \hdots, y_n$ with the the dual basis of $v_1, \hdots, v_n$ in $(\R^n)^{\vee}$. 
In this basis, projection from $\L^{\perp}$ is the coordinate projection onto the first $d$ coordinates.

Let  $\alpha = v_1\wedge \hdots \wedge v_d \in \bigwedge^d\R^n$, and take $\varphi = \varphi_\alpha$ to be the 
definite Liv{\v{s}}ic-type determinantal representation of $\L^{-1}$ given by Theorem~\ref{thm:detrep}. 
By the results described above, there is an Ulrich module $M$ corresponding to $\varphi$, 
and $M$ is a free $R$-module, where $R = \R[y_1, \hdots, y_d]$. 
Following the proof of Theorem~\ref{thm:SOS}, for each $i=1, \hdots, n$, 
there exists a $k\times k$ symmetric matrix $A_i$, whose entries are linear forms in 
$\R[y_1, \hdots, y_d]_1$, that represents the action of $y_i$ on $M \cong R^{k}$. 
For $i=1, \hdots, d$, we see that $A_i$ is just $y_i I$, where $I$ is the $k\times k$ identity matrix. 
The construction of the matrices $A_{d+1}, \hdots, A_n$ involves our constructed determinantal representation of $\L^{-1}$. We follow the proofs of \cite[Theorems 3.3., 4.5]{KuSh}.

Since $\varphi$ is definite at $\L^{\perp}$, we can change on coordinates on $\R^k$  to make 
$\varphi\wedge v_{d+1}\wedge \hdots \wedge v_n$ the identity matrix of size $k$. 
For each $j =d+1, \dots, n$, let $w_j$ denote the wedge product of $v_{d+1}\wedge \hdots \wedge v_{j-1} \wedge v_{j+1} \wedge \hdots \wedge v_n$. 
Then $\varphi\wedge w_j$ belongs to $\bigwedge^{n-1}\R^n \otimes \Sym_k(\R)$, which we can identify with $(\R^n)^{\vee} \otimes \Sym_k(\R)$. 
Since $y_1, \hdots, y_n$ is the dual basis of $v_1, \hdots, v_n$ in $(\R^n)^{\vee}$, then
\[ \varphi\wedge w_j 
\ =  \ \sum_{i=1}^n y_i \cdot \varphi \wedge w_j \wedge v_i
 \ = \ (-1)^{n-j} \left( y_jI  - A_j\right ) \ \ \
 \text{ where } \ \ 
 A_j = (-1)^{n-j-1} \cdot  \sum_{i=1}^d y_i \cdot \varphi \wedge w_j \wedge v_i .
\]
The matrices $A_{d+1}, \hdots, A_n$ commute and represent the action of $y_{d+1}, \hdots, y_n$ on $M$. 
Finishing the construction above requires taking a basis of $\R[\L^{-1}]$ as $R$-module.  For generic linear spaces 
$\L\in \Gr(d,n)$, we can using the following. 

\begin{prop}\label{prop:coordBasis}
Suppose the Pl\"ucker coordinates of $\L$ are all non-zero. The monomials of degree at most $d-1$ in variables $y_{d+1}, \hdots, y_n$ 
form an $R$-basis of $\R[\L^{-1}]$.
\end{prop}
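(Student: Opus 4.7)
The strategy is to apply graded Nakayama's lemma after a Hilbert-function comparison. By \cite[Prop.~7]{PS}, $\R[\L^{-1}]$ is a free $R$-module of rank $k = \binom{n-1}{d-1}$, and the number of monomials $y_{d+1}^{a_{d+1}} \cdots y_n^{a_n}$ with $\sum a_i \leq d-1$ is also $\binom{n-1}{d-1} = k$. It therefore suffices to show that these monomials descend to an $\R$-basis of the Artinian reduction $A := \R[\L^{-1}]/R_+ \R[\L^{-1}]$, where $R_+ = (y_1,\ldots,y_d)$; since their count matches the rank, graded Nakayama will then force them to be a free $R$-basis.

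First I would establish surjectivity of the natural graded $\R$-algebra map $\varphi \colon \R[y_{d+1},\ldots,y_n] \twoheadrightarrow A$: every element of $\R[\L^{-1}]$ is represented by a polynomial in $y_1,\ldots,y_n$, and its terms involving $y_1,\ldots,y_d$ vanish modulo $R_+ \R[\L^{-1}]$. Next I would pin down the Hilbert function of $A$ through the Proudfoot--Speyer degeneration: since all Pl\"ucker coordinates of $\L$ are nonzero, the matroid of $\L$ is $U_{d,n}$, so $\R[\L^{-1}]$ has the same Hilbert series as the Stanley--Reisner ring of the broken circuit complex $\Delta$, which is the cone (with apex $n$) over the $(d-2)$-skeleton of the simplex on $[n-1]$. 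Computing the $f$-vector and telescoping via Pascal's rule gives
\[
h_\Delta(t) \;=\; \sum_{i=0}^{d-1} \binom{n-d+i-1}{i}\, t^i,
\]
whose coefficients precisely match $\dim_\R \R[y_{d+1},\ldots,y_n]_i$ for $i \leq d-1$ and whose total is $\binom{n-1}{d-1} = k$ by the hockey-stick identity.

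Combining these ingredients, in each degree $i \leq d-1$ the map $\varphi_i$ is a surjection between $\R$-vector spaces of equal dimension $\binom{n-d+i-1}{i}$ and hence an isomorphism, while $A_i = 0$ for $i \geq d$. Thus the specified monomials give an $\R$-basis of $A$, and graded Nakayama combined with the freeness of $\R[\L^{-1}]$ over $R$ upgrades this to a free $R$-basis. The main point requiring care is the computation of $h_\Delta(t)$ and its recognition as the degree-$\leq d-1$ truncation of $1/(1-t)^{n-d}$; this combinatorial coincidence is essentially what makes the proposition true.
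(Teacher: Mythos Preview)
Your proof is correct, and its overall architecture---reduce to the Artinian fiber $A=\R[\L^{-1}]/(y_1,\ldots,y_d)$, show the monomials form an $\R$-basis there, then invoke graded Nakayama and freeness---is the same as the paper's. The difference lies in how each argument establishes that the monomials are a basis of $A$. The paper proves \emph{linear independence} directly and very cheaply: since every circuit of the uniform matroid has size $d+1$, the ideal $\mathcal{I}(\L^{-1})$ lives entirely in degrees $\geq d$, so no nonzero polynomial of degree $\leq d-1$ in $y_{d+1},\ldots,y_n$ can lie in $\mathcal{I}(\L^{-1})+(y_1,\ldots,y_d)$; a single total-dimension count ($\dim_\R A = \operatorname{rank}_R \R[\L^{-1}] = \binom{n-1}{d-1}$) then gives spanning. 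You instead take the dual route: surjectivity of $\R[y_{d+1},\ldots,y_n]\to A$ is immediate, and you obtain injectivity by computing the full $h$-vector of the broken circuit complex of $U_{d,n}$ and matching it degree-by-degree to $\dim \R[y_{d+1},\ldots,y_n]_i$. Your path is longer and uses more of the Proudfoot--Speyer machinery (the graded degeneration, not just the rank), but it yields the finer statement that $A_i \cong \R[y_{d+1},\ldots,y_n]_i$ in each degree $i\leq d-1$ and $A_i=0$ for $i\geq d$; the paper's argument gets the basis with a one-line observation about circuit sizes but does not explicitly record this graded refinement.
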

\begin{proof}
Since the matroid associated to $\L$ is the uniform matroid of rank $d$, every circuit has size $d+1$.  It follows that every polynomial in the ideal $\mathcal{I}(\L^{-1})\subset \R[y_1, \hdots, y_n]$
of polynomials vanishing on $\L^{-1}$ has degree $\geq d$. Then the ideal $\mathcal{I}(\L^{-1})+(y_1,\ldots,y_d)$ does not contain any polynomials in $\R[y_{d+1},\hdots, y_n]$
of degree $\leq d-1$.  From this, we see that the monomials of degree at most $d-1$ in $y_{d+1},\hdots, y_n$ are linearly independent in the $\R$-vector space 
$\R[\L^{-1}]/(y_1, \hdots, y_d)$. The dimension of this $\R$-vector space is the same as the rank of the free $R$-module $\R[\L^{-1}]$ which is $\binom{n-1}{d-1}$. This is also the number of monomials of degree at most $d-1$ in $n-d$ variables. Thus, the monomials of degree at most $d-1$ in $y_{d+1},\hdots, y_n$ are in fact a basis of the $\R$-vector space $\R[\L^{-1}]/(y_1, \hdots, y_d)$. By Nakayama's Lemma, this implies that these monomials generate $\R[\L^{-1}]$ as a free $R$-module. Thus, again by comparing rank and number of monomials the claim follows. \end{proof}

Now we assume that all Pl\"ucker coordinates of $\L$ are non-zero, let $k = \binom{n-1}{d-1} = \deg(\L^{-1})$, and take the 
monomials $\{(y_{d+1})^{\alpha_1} \cdots (y_n)^{\alpha_{n-d}}\}_{\alpha\in \Delta}$ as the basis of 
$\R[\L^{-1}]$ as an $R$-module, where $\Delta$ denotes the set of points in $(\Z_{\geq 0})^{n-d}$ whose coordinates sum to at most $d-1$. 
This writes the trace bilinear form of \eqref{eq:traceForm} as
\[
H  \ \ = \ \ 
\left( \tr\left((A_{d+1})^{\alpha_1 + \beta_1} \cdots (A_{n})^{\alpha_{n-d} + \beta_{n-d}} \right) \right)_{\alpha, \beta \in \Delta}
\ \ = \ \
\left( \tr(B_{\alpha} \cdot B_{\beta}) \right)_{\alpha,  \beta \in \Delta}
\]
where for any $\alpha\in \Delta$, $B_{\alpha}$ equals the matrix product $(A_{d+1})^{\alpha_1} \cdots (A_{n})^{\alpha_{n-d}} $.
Then $H$ equals the product $UU^T$ where $U$ is the $k\times k^2$ matrix
whose rows consist of the vectors $\vecz(B_{\alpha})$ for $\alpha \in \Delta$. 
The entropic discriminant is the determinant of $H$. By the Cauchy-Binet theorem, 
this is a sum of the squares of the $\binom{k^2}{k}$ maximal minors of $U$.

\begin{example}\label{exp:entrsos}
 We illustrate this construction on a small example.
 Let $\L\in \Gr(2,4)$ be the rowspan of $\begin{pmatrix} 1 & 1 & 1 & 1 \\ 0 & 1 & 2 & 3 \end{pmatrix}$.
The entropic discriminant is the  the discriminant of the projection $\mathcal{L}^{-1} \to \mathbb{P}^1, 
(x_1:x_2:x_3:x_4) \to (x_1+x_2 + x_3+x_4:x_2+2 x_3+3x_4)$
 from $\mathcal{L}^{\perp}$. As in Example~\ref{ex:Gr24}, the determinantal representation
 of $\L^{-1}$ that we get from our construction is 
 \[\varphi =          \begin{pmatrix}
 e_{12}+\frac{1}{2} e_{13}+\frac{1}{3} e_{14} & -e_{12} & -\frac{1}{2}  e_{13} \\
 -e_{12} & e_{12}+e_{23}+\frac{1}{2} e_{24} & -e_{23} \\
 -\frac{1}{2} e_{13} & -e_{23} & \frac{1}{2} e_{13}+e_{23}+e_{34} \\
\end{pmatrix}  \ \ \text{ in }  \ \ \bigwedge\nolimits^2 \R^4 \otimes \Sym_3(\R).
            \]
Here we take $v_1=e_1+e_2+e_3+e_4$, $v_2=e_2+2 e_3+3e_4$, $v_3=-e_1+2e_2-e_3$ and $v_4=-e_2+2e_3-e_4$, 
and $y_i = \sum_j v_{ij} x_j$. 
Since $\varphi$ is definite at $\L^{\perp}$, $\varphi\wedge v_3 \wedge v_4$ is positive definite. Indeed, 
\[ \varphi \wedge v_3 \wedge v_4 
\ \ = \ \ \begin{pmatrix} 3 & -1 & -1 \\ -1 & 3 & -1 \\ -1 & -1 & 3\end{pmatrix}
\ \ =  \ \ QQ^T 
\ \ \text{ where }  \ \ 
Q =  \begin{pmatrix} 1 & 1 & -1 \\ 1 & -1 & 1 \\ -1 & 1 & 1\end{pmatrix}.
\]            
Then for $w_3 = v_4$ and $w_4 = v_3$, the matrix 
$Q^{-1}(\varphi\wedge w_j)Q^{-T}$ can be written as $\pm \left( y_jI  - A_j\right )$ where for $j=3,4$, 
$A_j $ equals $  (-1)^{j-1} \cdot Q^{-1} (y_1\cdot  \varphi\wedge w_j \wedge v_1  + y_2 \cdot \varphi\wedge w_j \wedge v_2 )Q^{-T}$, giving
\begin{align*}
&\hspace{.2in}A_3 =   &&\hspace{.2in}A_4 = \\ 
 &{\footnotesize \frac{1}{12}
\begin{pmatrix}
 3 y_1-y_2 & -6 y_1-4 y_2 & 3 y_1-3 y_2 \\
 -6 y_1-4 y_2 & 12 y_1+20 y_2 & -6 y_1-24 y_2 \\
 3 y_1-3 y_2 & -6 y_1-24 y_2 & 15 y_1+27 y_2 \\    \end{pmatrix} 
}, &&{\footnotesize
\frac{1}{12}
\begin{pmatrix}
 3 y_1+10 y_2 & -6 y_1-14 y_2 & -3 y_1-12 y_2 \\
 -6 y_1-14 y_2 & 12 y_1+16 y_2 & 6 y_1-6 y_2 \\
 -3 y_1-12 y_2 & 6 y_1-6 y_2 & 15 y_1+18 y_2 \\
\end{pmatrix} }. \smallskip \\
\end{align*}
By Proposition~\ref{prop:coordBasis}, $\{1,y_3, y_4\}$ forms a basis for $\R[\L^{-1}]$ over $R = \R[y_1, y_2]$. 
Therefore the trace bilinear form of \eqref{eq:traceForm} is represented by the matrix $H =$
\[ \begin{pmatrix}
     \tr( \textrm{I}_3 ) & \tr(A_3) & \tr(A_4) \\
     \tr(A_3) & \tr(A_3^2) & \tr(A_3 A_4) \\
     \tr(A_4) & \tr(A_3 A_4) & \tr(A_4^2)
    \end{pmatrix} =  
\begin{pmatrix}3 & \frac{5 y_1}{2}+\frac{23 y_2}{6} & \frac{5 y_1}{2}+\frac{11
   y_2}{3} \\
 \frac{5 y_1}{2}+\frac{23 y_2}{6} & \frac{15 y_1^2}{4}+\frac{40
   y_1 y_2}{3}+\frac{583 y_2^2}{36} & \frac{5 y_1^2}{2}+\frac{15
   y_1 y_2}{2}+\frac{317 y_2^2}{36} \\
 \frac{5 y_1}{2}+\frac{11 y_2}{3} & \frac{5 y_1^2}{2}+\frac{15
   y_1 y_2}{2}+\frac{317 y_2^2}{36} & \frac{15 y_1^2}{4}+\frac{55
   y_1 y_2}{6}+\frac{179 y_2^2}{18} \\
   \end{pmatrix}.
\]
The determinant of $H$ is the entropic discriminant: 
\[\det(H)  \ \ = \ \ \frac{25}{144} \left(45 y_1^4+270 y_1^3 y_2+763 y_1^2
   y_2^2+1074 y_1 y_2^3+773 y_2^4\right).\]
As the matrices $A_3, A_4$ are symmetric, $H$ can be written as the product $UU^T$, 
where $U$ is the $3\times 9$ matrix with rows $\vecz(I)$, $\vecz(A_3)$ and $\vecz(A_4)$. 
Using the Cauchy-Binet Theorem, this writes the entropic discriminant is a sum of $\binom{9}{3}$ squares. 
This is far from the shortest sum of squares representation since any nonnegative binary form is a sum of two squares. 
\end{example}


\begin{thebibliography}{10}

\bibitem{AK70}
A.~Altman and S.~Kleiman.
\newblock {\em Introduction to {G}rothendieck duality theory}.
\newblock Lecture Notes in Mathematics, Vol. 146. Springer-Verlag, Berlin-New
  York, 1970.

\bibitem{scattering}
N.~Arkani-Hamed, J.~Bourjaily, F.~Cachazo, A.~Goncharov, A.~Postnikov, and
  J.~Trnka.
\newblock {\em Grassmannian geometry of scattering amplitudes}.
\newblock Cambridge University Press, Cambridge, 2016.

\bibitem{BK}
O.~Bernardi and C.~Klivans.
\newblock Directed rooted forests in higher dimension, Preprint, available at
  \url{http://arxiv.org/abs/1512.07757}, 2015.

\bibitem{Hadamard}
C.~Bocci, E.~Carlini, and J.~Kileel.
\newblock Hadamard products of linear spaces.
\newblock {\em J. Algebra}, 448:595--617, 2016.

\bibitem{stabilityPreservers}
J.~Borcea and P.~Br{\"a}nd{\'e}n.
\newblock The {L}ee-{Y}ang and {P}\'olya-{S}chur programs. {I}. {L}inear
  operators preserving stability.
\newblock {\em Invent. Math.}, 177(3):541--569, 2009.

\bibitem{Bor1846}
C.~W. Borchardt.
\newblock Neue {E}igenschaft der {G}leichung, mit deren {H}\"ulfe man die
  secul\"aren {S}t\"orungen der {P}laneten bestimmt.
\newblock {\em J. Reine Angew. Math.}, 30:38--45, 1846.

\bibitem{BrandenHPP}
P.~Br{\"a}nd{\'e}n.
\newblock Polynomials with the half-plane property and matroid theory.
\newblock {\em Adv. Math.}, 216(1):302--320, 2007.

\bibitem{Bra11}
P.~Br{\"a}nd{\'e}n.
\newblock Obstructions to determinantal representability.
\newblock {\em Adv. Math.}, 226(2):1202--1212, 2011.

\bibitem{Ulrich1}
J.~P. Brennan, J.~Herzog, and B.~Ulrich.
\newblock Maximally generated {C}ohen-{M}acaulay modules.
\newblock {\em Math. Scand.}, 61(2):181--203, 1987.

\bibitem{COSW04}
Y.-B. Choe, J.~G. Oxley, A.~D. Sokal, and D.~G. Wagner.
\newblock Homogeneous multivariate polynomials with the half-plane property.
\newblock {\em Adv. in Appl. Math.}, 32(1-2):88--187, 2004.
\newblock Special issue on the Tutte polynomial.

\bibitem{Chow}
W.-L. Chow and B.~L. van~der Waerden.
\newblock Zur algebraischen {G}eometrie. {IX}.
\newblock {\em Math. Ann.}, 113(1):692--704, 1937.

\bibitem{CL87}
R.~Cordovil and B.~Lindstr{\"o}m.
\newblock Simplicial matroids.
\newblock In {\em Combinatorial geometries}, volume~29 of {\em Encyclopedia
  Math. Appl.}, pages 98--113. Cambridge Univ. Press, Cambridge, 1987.

\bibitem{ChowIntro}
J.~Dalbec and B.~Sturmfels.
\newblock Introduction to {C}how forms.
\newblock In {\em Invariant methods in discrete and computational geometry
  ({C}ura\c cao, 1994)}, pages 37--58. Kluwer Acad. Publ., Dordrecht, 1995.

\bibitem{DLSV12}
J.~A. De~Loera, B.~Sturmfels, and C.~Vinzant.
\newblock The central curve in linear programming.
\newblock {\em Found. Comput. Math.}, 12(4):509--540, 2012.

\bibitem{ESW03}
D.~Eisenbud, F.-O. Schreyer, and J.~Weyman.
\newblock Resultants and {C}how forms via exterior syzygies.
\newblock {\em J. Amer. Math. Soc.}, 16(3):537--579, 2003.

\bibitem{Gar51}
L.~G{\aa}rding.
\newblock Linear hyperbolic partial differential equations with constant
  coefficients.
\newblock {\em Acta Math.}, 85:1--62, 1951.

\bibitem{Guel97}
O.~G{\"u}ler.
\newblock Hyperbolic polynomials and interior point methods for convex
  programming.
\newblock {\em Math. Oper. Res.}, 22(2):350--377, 1997.

\bibitem{Gurvits}
L.~Gurvits.
\newblock Combinatorial and algorithmic aspects of hyperbolic polynomials.
\newblock {\em Electronic Colloquium on Computational Complexity {(ECCC)}},
  (070), 2004.

\bibitem{hanselka2014definite}
C.~Hanselka.
\newblock Definite determinantal representations of ternary hyperbolic forms,
  Preprint, available at \url{http://arxiv.org/abs/1411.1661}, 2014.

\bibitem{Hatcher}
A.~Hatcher.
\newblock {\em Algebraic topology}.
\newblock Cambridge University Press, Cambridge, 2002.

\bibitem{HV07}
J.~W. Helton and V.~Vinnikov.
\newblock Linear matrix inequality representation of sets.
\newblock {\em Comm. Pure Appl. Math.}, 60(5):654--674, 2007.

\bibitem{Ulrich2}
J.~Herzog, B.~Ulrich, and J.~Backelin.
\newblock Linear maximal {C}ohen-{M}acaulay modules over strict complete
  intersections.
\newblock {\em J. Pure Appl. Algebra}, 71(2-3):187--202, 1991.

\bibitem{Ily92}
N.~V. Ilyushechkin.
\newblock The discriminant of the characteristic polynomial of a normal matrix.
\newblock {\em Mat. Zametki}, 51(3):16--23, 143, 1992.

\bibitem{kummer2013determinantal}
M.~Kummer.
\newblock Determinantal representations and b\'ezoutians.
\newblock {\em Math. Z.}, 2016.

\bibitem{us}
M.~Kummer, D.~Plaumann, and C.~Vinzant.
\newblock Hyperbolic polynomials, interlacers, and sums of squares.
\newblock {\em Math. Program.}, 153(1, Ser. B):223--245, 2015.

\bibitem{KuSh}
M.~Kummer and E.~Shamovich.
\newblock Real fibered morphisms and ulrich sheaves, Preprint, available at
  \url{http://arxiv.org/abs/1507.06760}, 2015.

\bibitem{Lax98}
P.~D. Lax.
\newblock On the discriminant of real symmetric matrices.
\newblock {\em Comm. Pure Appl. Math.}, 51(11-12):1387--1396, 1998.

\bibitem{lusztig}
G.~Lusztig.
\newblock Total positivity in reductive groups.
\newblock In {\em Lie theory and geometry}, volume 123 of {\em Progr. Math.},
  pages 531--568. Birkh\"auser Boston, Boston, MA, 1994.

\bibitem{MSS14}
A.~W. Marcus, D.~A. Spielman, and N.~Srivastava.
\newblock Interlacing families {II}: {M}ixed characteristic polynomials and the
  {K}adison-{S}inger problem.
\newblock {\em Ann. of Math. (2)}, 182(1):327--350, 2015.

\bibitem{MSUZ14}
M.~Micha{\l}ek, B.~Sturmfels, C.~Uhler, and P.~Zwiernik.
\newblock Exponential varieties.
\newblock {\em Proc. Lond. Math. Soc. (3)}, 112(1):27--56, 2016.

\bibitem{NPT13}
T.~Netzer, D.~Plaumann, and A.~Thom.
\newblock Determinantal representations and the {H}ermite matrix.
\newblock {\em Michigan Math. J.}, 62(2):407--420, 2013.

\bibitem{TimundThom12}
T.~Netzer and A.~Thom.
\newblock Polynomials with and without determinantal representations.
\newblock {\em Linear Algebra Appl.}, 437(7):1579--1595, 2012.

\bibitem{New72}
M.~J. Newell.
\newblock On identities associated with a discriminant.
\newblock {\em Proc. Edinburgh Math. Soc. (2)}, 18:287--291, 1972/73.

\bibitem{Oxley}
J.~Oxley.
\newblock {\em Matroid theory}, volume~21 of {\em Oxford Graduate Texts in
  Mathematics}.
\newblock Oxford University Press, Oxford, second edition, 2011.

\bibitem{PV13}
D.~Plaumann and C.~Vinzant.
\newblock Determinantal representations of hyperbolic plane curves: an
  elementary approach.
\newblock {\em J. Symbolic Comput.}, 57:48--60, 2013.

\bibitem{PS}
N.~Proudfoot and D.~Speyer.
\newblock A broken circuit ring.
\newblock {\em Beitr\"age Algebra Geom.}, 47(1):161--166, 2006.

\bibitem{Ren06}
J.~Renegar.
\newblock Hyperbolic programs, and their derivative relaxations.
\newblock {\em Found. Comput. Math.}, 6(1):59--79, 2006.

\bibitem{SSV13}
R.~Sanyal, B.~Sturmfels, and C.~Vinzant.
\newblock The entropic discriminant.
\newblock {\em Adv. Math.}, 244:678--707, 2013.

\bibitem{SV}
E.~Shamovich and V.~Vinnikov.
\newblock Liv{\v{s}}ic-type determinantal representations and hyperbolicity,
  Preprint, available at \url{http://arxiv.org/abs/1410.2826}, 2014.

\bibitem{Var95}
A.~Varchenko.
\newblock Critical points of the product of powers of linear functions and
  families of bases of singular vectors.
\newblock {\em Compositio Math.}, 97(3):385--401, 1995.

\bibitem{vppf}
V.~Vinnikov.
\newblock L{MI} representations of convex semialgebraic sets and determinantal
  representations of algebraic hypersurfaces: past, present, and future.
\newblock In {\em Mathematical methods in systems, optimization, and control},
  volume 222 of {\em Oper. Theory Adv. Appl.}, pages 325--349.
  Birkh\"auser/Springer Basel AG, Basel, 2012.

\bibitem{wagnerSurvey}
D.~G. Wagner.
\newblock Multivariate stable polynomials: theory and applications.
\newblock {\em Bull. Amer. Math. Soc. (N.S.)}, 48(1):53--84, 2011.

\bibitem{WhiteMatroids}
N.~White, editor.
\newblock {\em Matroid applications}, volume~40 of {\em Encyclopedia of
  Mathematics and its Applications}.
\newblock Cambridge University Press, Cambridge, 1992.

\end{thebibliography}

\end{document}